\newtheorem{theorem}{Theorem}[section]
\newtheorem{lemma}[theorem]{Lemma}
\theoremstyle{definition}
\newtheorem{remark}[theorem]{Remark}
\def\XXint#1#2#3{{\setbox0=\hbox{$#1{#2#3}{\int}$}
     \vcenter{\hbox{$#2#3$}}\kern-.5\wd0}}
\def\ud{{\rm\,d}}
\def\C{\mathbb{C}}
\def\N{\mathbb{N}}
\def\R{\mathbb{R}}
\def\Z{\mathbb{Z}}
\def\OO{\mathcal{O}}
\def\pr(#1){\left({#1}\right)}
\def\br[#1]{\left[{#1}\right]}
\def\abs#1{\left|{#1}\right|}
\def\norm#1{\left\|{#1}\right\|}
\def\pFq#1#2{{\,}_{#1}F_{#2}}
\def\ii{{\rm i}}
\newcommand{\tr}{\operatorname{tr}}
\begin{document}

\title{Fast and stable rational approximation of generalized hypergeometric functions}

\author{%
{\sc Richard Mika\"el Slevinsky\thanks{Email: Richard.Slevinsky@umanitoba.ca}}\\[2pt]
Department of Mathematics, University of Manitoba, Winnipeg, Canada}

\maketitle

\begin{abstract}
Rational approximations of generalized hypergeometric functions ${}_pF_q$ of type $(n+k,k)$ are constructed by the Drummond and factorial Levin-type sequence transformations. We derive recurrence relations for these rational approximations that require $\mathcal{O}[\max\{p,q\}(n+k)]$ flops. These recurrence relations come in two forms: for the successive numerators and denominators; and, for an auxiliary rational sequence and the rational approximations themselves. Numerical evidence suggests that these recurrence relations are much more stable than the original formul\ae~for the Drummond and factorial Levin-type sequence transformations. Theoretical results on the placement of the poles of both transformations confirm the superiority of factorial Levin-type transformation over the Drummond transformation.

{\em Keywords}: Generalized hypergeometric functions. Drummond sequence transformation. Factorial Levin-type sequence transformation.
\end{abstract}

\section{Introduction}

Generalized hypergeometric functions \cite[\S 16]{Olver-et-al-NIST-10} are formally equal to their Maclaurin series:
\begin{equation}\label{eq:pFqbyMac}
\pFq{p}{q}\left(\begin{array}{c}
\alpha_1,\ldots,\alpha_p\\
\beta_1,\ldots,\beta_q\end{array};
z\right) = \sum_{k=0}^\infty \dfrac{(\alpha_1)_k\cdots(\alpha_p)_k}{(\beta_1)_k\cdots(\beta_q)_k}\dfrac{z^k}{k!},
\end{equation}
where $(x)_n = \Gamma(x+n)/\Gamma(x)$ is the Pochhammer symbol for the rising factorial~\cite[\S 6.1.22]{Abramowitz-Stegun-65}. If some $\alpha_j \in -\N_0$, then the series terminates after at most $\alpha_j+1$ terms. The Maclaurin series has radius of convergence $0$ if $p > q+1$, $1$ if $p = q+1$, and $\infty$ if $p\le q$.

If $p\ge q+1$, the Maclaurin series may be analytically continued into an appropriately slit complex plane by the Meijer $G$-function~\cite[\S 16.18.1]{Olver-et-al-NIST-10} and its Mellin--Barnes integral representation~\cite[\S 16.17.1]{Olver-et-al-NIST-10}:
\begin{align}\label{eq:pFqbyG}
\pFq{p}{q}\left(\begin{array}{c}
\alpha_1,\ldots,\alpha_p\\
\beta_1,\ldots,\beta_q\end{array};
z\right) & := \frac{\displaystyle\prod_{k=1}^q\Gamma(\beta_k)}{\displaystyle\prod_{k=1}^p\Gamma(\alpha_k)}G_{p,q+1}^{1,p}\left(-z;\begin{array}{c}1-\alpha_1,\ldots,1-\alpha_p\\0,1-\beta_1,\ldots,1-\beta_q\end{array}\right),\\
& ~= \frac{1}{2\pi\ii}\int_C\frac{(\alpha_1)_s\cdots(\alpha_p)_s}{(\beta_1)_s\cdots(\beta_q)_s}\Gamma(-s)(-z)^s\ud s,
\end{align}
where the contour $C$ separates the poles of $(\alpha_k)_s$ for $k=1,\ldots,p$ from those of $\Gamma(-s)$ and is otherwise defined so that the integral converges. When $p = q+1$, the functions are analytic in the cut plane $\C\setminus[1,\infty)$, and when $p>q+1$, they are analytic in the cut plane $\C\setminus[0,\infty)$. In any case, we take Eq.~\eqref{eq:pFqbyG} to {\em define} the generalized hypergeometric functions and notice that the Maclaurin series is the asymptotic expansion of the contour integral as $z\to0$, whether the parameters and variable dictate that this expansions is convergent or divergent.

Generalized hypergeometric functions serve as an umbrella to most of the early transcendentals and special functions of mathematical physics; hence, they have many applications in a broad range of fields. Many numerical algorithms have been developed for their computation~\cite{Forrey-137-79-97,Becken-Schmelcher-126-449-00,Muller-90-179-01,Michel-Stoitsov-178-535-08,Colman-Cuyt-van-Deun-38-11-1-11,Willis-59-447-12,Doornik-84-1813-15,Pearson-Olver-Porter-74-821-17,Crespo-et-al-19,Johansson-45-30-1-19}, with a great majority focusing on Kummer's and Tricomi's confluent hypergeometric functions, Gauss' hypergeometric function, and others with $p+q\le3$, as a general observation. Moreover, several techniques in analytic continuation have helped extend our computational abilities of these special functions, including the re-expansion formul\ae~of B\"uhring~\cite{Buhring-18-884-87,Buhring-19-1249-88}, and Pad\'e approximants to some generalized hypergeometric functions by Iserles~\cite{Iserles-19-543-79} and Sidi~\cite{Sidi-7-37-81}.

In this work, we utilize the Drummond sequence transformation and a factorial Levin-type transformation to produce rational approximations of type $(n+k,k)$ to generalized hypergeometric functions. Sequence transformations are a powerful tool to accelerate the convergence of sequences (of partial sums) and, in case of divergence, to provide a reasonable antilimit. We refer the interested reader to an incomplete collection of references~\cite{Drummond-6-69-72,Levin-B3-371-73,Levin-Sidi-9-175-81,Weniger-10-189-89,Homeier-122-81-00,Sidi-03} on the matter and references therein for their rich history. The original formul\ae~for the rational approximations suffer from severe numerical instability with increasing $k$ that complicates any use of a reasonable stopping criterion and leads to unanticipated divergence in finite precision floating-point arithmetic. This numerical instability is due to two factors: that the rational function is represented as a polynomial divided by another polynomial instead of a ratio of rationals; and, that the polynomials are expressed in the monomial basis. Moreover, generating such sequences of rational approximations of type $(n+\ell,\ell)$ for $\ell = 0,\ldots,k$ incurs a computational complexity of $\OO[k(n+k)]$ flops.

The generalized hypergeometric series in Eq.~\eqref{eq:pFqbyMac} is special because the ratio of successive terms is a fixed rational function of the index. The converse is also true, whereby we relate the parameters to the negative roots and poles of that rational function of the index. This fact enables us to develop recurrence relations for the numerator and denominator sequences of Drummond and factorial Levin-type that cost $\OO[\max\{p,q\}(n+k)]$ flops to compute all rational approximations of type $(n+\ell,\ell)$ for $\ell = 0,\ldots,k$. By considering the ratios of successive denominators as an auxiliary rational sequence, recurrence relations are produced that represent the transformations as ratios of rationals. This stabilized scheme enables a reasonable stopping criterion to be effective at providing good results in finite precision.

We include a comprehensive suite of numerical examples to demonstrate the effectiveness of the new recurrence relations for the Drummond and factorial Levin-type transformations in a finite precision environment, and we briefly discuss its possible implementation in extended precision for guaranteed results. Finally, we include a discussion on the placement of the poles of the rational functions and extensions and applications of this work. Our method is implemented in the {\sc Julia} package {\tt HypergeometricFunctions.jl}~\cite{Slevinsky-GitHub-HypergeometricFunctionsjl}.

\section{Sequence transformations}

Consider the model sequence $\{a_n\}_{n=0}^\infty$, its partial sums:
\[
s_n = \sum_{k=0}^n a_k,
\]
and their limit $s = \lim_{n\to\infty} s_n$ (or antilimit in case of divergence~\cite{Sidi-03}) so that:
\[
s - s_n = r_n.
\]
A large constructive class of sequence transformations results from a characterization of the remainder $r_n$ in terms of a nonzero dominant controlling factor $\omega_n$ multiplied by a constructive refinement term $z_n^{(k)}$, resulting in the transformation:
\begin{equation}\label{eq:SequenceTransformation}
T_n^{(k)} - s_n = \omega_n z_n^{(k)}.
\end{equation}
The index $k$ refers to the number of unknowns appearing linearly in the constructive refinement. In particular, the equation may be sampled at $k+1$ distinct values of $n$, resulting in a $(k+1)\times(k+1)$ linear system of equations for the $k$ unknowns in $z_n^{(k)}$ and the value of the transformation $T_n^{(k)}$. As the linear system is dense, the computational complexity of this general approach is $\OO(k^3)$ flops.

To reduce the complexity, it has been noted that the values of $z_n^{(k)}$ are normally discarded; hence, for just the value of the transformation $T_n^{(k)}$, one expects to be able to reduce the complexity proportionally. In case the transformation is determined by $s_n, \ldots, s_{n+k}$, this can be done by dividing by the controlling factor $\omega_n$ and applying a so-called {\em annihilation} operator~\cite{Homeier-122-81-00} to the refinements $z_n^{(k)}$. We will now discuss two examples.

\subsection{Drummond's sequence transformation}

Drummond's sequence transformation~\cite{Drummond-6-69-72} is the result of using polynomial constructive refinements in Eq.~\eqref{eq:SequenceTransformation}, $z_n^{(k)} = p_{k-1}(n)$ where $p_{-1}(n) \equiv 0$ so that $\deg(p_{-1}) = 0$ and $\deg(p_{k-1}) = k-1$ otherwise. It is well-known that the forward finite difference operator $\Delta$ reduces the degree of a polynomial by one. Thus, $\Delta^k p_{k-1}(n) \equiv 0$.

Then:
\begin{equation}\label{eq:Drummond}
T_n^{(k)} = \frac{N_n^{(k)}}{D_n^{(k)}} = \frac{\displaystyle \Delta^k\left(\frac{s_n}{\omega_n}\right)}{\displaystyle \Delta^k\left(\frac{1}{\omega_n}\right)} = \frac{\displaystyle \sum_{j=0}^k \binom{k}{j}(-1)^{k-j}\frac{s_{n+j}}{\omega_{n+j}}}{\displaystyle \sum_{j=0}^k \binom{k}{j}(-1)^{k-j}\frac{1}{\omega_{n+j}}}
\end{equation}
For fixed $k$, Eq.~\eqref{eq:Drummond} may be computed in $\OO(n+k)$ flops, assuming $\omega_n$ costs $\OO(1)$ flops and $s_n$ as a partial sum costs $\OO(n)$ flops. It is important to generate multiple transformations for contiguous $k$ and fixed $n$ to have a better picture of the sequence of transformations of the partial sums. By using this formula for every $\ell = 0,\ldots,k$, this would cost $\OO[k(n+k)]$ flops, a quadratic complexity in $k$.

\subsection{A factorial Levin-type sequence transformation}

A factorial Levin-type transformation uses a constructive refinement term based on an inverse factorial series:
\begin{equation}\label{eq:inversefactorialLevin}
R_n^{(k)} - s_n = \omega_n \sum_{j=0}^{k-1} \dfrac{c_j}{(n+\gamma)_j},
\end{equation}
where $c_j$ are unknown constants and $\gamma>0$ is a usually small algorithmic parameter. This transformation first appeared in the construction of rational and Pad\'e approximants to certain generalized hypergeometric functions by Luke~\cite{Luke-38-279-60}, Fields~\cite{Fields-19-606-65}, and Sidi~\cite{Sidi-7-37-81}. It was cast as a sequence transformation by Shelef~\cite{Shelef-Thesis-87}, and it was mildly generalized to its current form by Weniger~\cite{Weniger-10-189-89} through the introduction of the algorithmic parameter $\gamma>0$. It is called a Levin-type transformation due to the seminal work by Levin~\cite{Levin-B3-371-73} where the constructive refinement terms are bounded as $n\to\infty$, as opposed to the polynomials in the Drummond transformation, enabling the controlling factor to take on its tautological role.

Multiplying both sides of the equation by $(n+\gamma)_{k-1}/\omega_n$, the right-hand side is again a polynomial that may be annihilated by the $k^{\rm th}$ order finite difference. This results in a transformation:
\begin{equation}\label{eq:factorialLevin}
R_n^{(k)} = \frac{P_n^{(k)}}{Q_n^{(k)}} = \frac{\displaystyle \Delta^k\left[\frac{(n+\gamma)_{k-1}s_n}{\omega_n}\right]}{\displaystyle \Delta^k\left[\frac{(n+\gamma)_{k-1}}{\omega_n}\right]} = \frac{\displaystyle \sum_{j=0}^k \binom{k}{j}(-1)^{k-j}\frac{(n+j+\gamma)_{k-1}s_{n+j}}{\omega_{n+j}}}{\displaystyle \sum_{j=0}^k \binom{k}{j}(-1)^{k-j}\frac{(n+j+\gamma)_{k-1}}{\omega_{n+j}}},
\end{equation}
that is a different weighted average of the partial sums. As with the Drummond transformation, Eq.~\eqref{eq:factorialLevin} may be computed in $\OO(n+k)$ flops and computing the first $\ell=0,\ldots,k$ transformations costs $\OO[k(n+k)]$ flops, also a quadratic complexity in $k$.

\subsection{Explicit remainder estimates}

The most reasonable choices of remainder estimates depend linearly on the sequence $\{a_n\}_{n=0}^\infty$ itself. This was formalized by Levin and Sidi~\cite{Levin-Sidi-9-175-81} for the class of sequences that satisfy an $m^{\rm th}$ order linear homogeneous difference equation with coefficients that have Poincar\'e-type asymptotic expansions as $n\to\infty$.

\begin{theorem}[Theorem 2 in~\cite{Levin-Sidi-9-175-81}]\label{theorem:mthorderdifference} Let:
\[
a_n = \sum_{k=1}^m p_k(n)\Delta^k a_n,\quad{\rm where}\quad p_k(n) \sim n^{i_k}\sum_{j=0}^\infty \frac{b_{k,j}}{n^j},\quad{\rm as}\quad n\to\infty,
\]
where $i_k\le k$, for $k=1,\ldots,m$ and suppose $\{s_n\}_{n=0}^\infty$ converges to $s$. If:
\[
\lim_{n\to\infty} \left[\Delta^{i-1} p_k(n)\right] \left[\Delta^{k-i} a_n\right] = 0,\quad{\rm for}\quad k=i,\ldots,m,\quad i = 1,\ldots,m,
\]
and if:
\[
\sum_{k=1}^m l(l-1)\cdots(l-k+1)p_{k,0} \ne 1,\quad{\rm for}\quad l\ge-1,
\]
where $p_{k,0} = \lim_{n\to\infty} n^{-k} p_k(n)$, then:
\[
s-s_n \sim \sum_{k=0}^{m-1} n^{j_k}\Delta^ka_n\sum_{j=0}^\infty\frac{c_{k,j}}{n^j},\quad{\rm as}\quad n\to\infty,
\]
where $j_k \le \max\{i_{k+1},i_{k+2}-1,\ldots,i_m-m+k+1\}$ for $k=0,\ldots,m-1$.
\end{theorem}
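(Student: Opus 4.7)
The plan is to derive the asymptotic expansion by iterated summation by parts applied to the tail sum $r_n = s - s_n = \sum_{j=n}^{\infty} a_j$ after substituting the given difference equation. Using $a_j = \sum_{k=1}^m p_k(j) \Delta^k a_j$, one first rewrites
\[
r_n = \sum_{k=1}^m \sum_{j=n}^{\infty} p_k(j) \Delta^k a_j,
\]
and then reduces each inner sum by applying Abel summation by parts $k$ times to peel off one forward difference of $a$ at a time. Each application produces a boundary term at infinity of the form $[\Delta^{i-1} p_k(j)][\Delta^{k-i} a_j]$, which vanishes precisely by the hypothesized limit conditions, together with a boundary term at $j = n$ that will contribute to the final expansion, and a residual tail sum in which one additional difference has been applied to $p_k$ and one fewer to $a$.

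After all differences on $a$ have been peeled off, the boundary contributions at $n$ collect into a finite linear combination of $\Delta^\ell a_n$ for $\ell = 0, \ldots, m-1$, whose coefficients are built from iterated differences of the $p_k$'s, while the leftover tail (in which no differences remain on $a$) can be reabsorbed into $r_n$ itself by a second substitution of the difference equation. The upshot is an implicit identity of the form
\[
[1 - B(n)]\, r_n = \sum_{\ell=0}^{m-1} A_\ell(n)\, \Delta^\ell a_n,
\]
with $A_\ell(n)$ and $B(n)$ explicit finite combinations of the $p_k$'s and their iterated differences. Since $\Delta$ applied to a Poincar\'e-type expansion lowers the algebraic degree by one while preserving the form of the expansion, each coefficient inherits a Poincar\'e-type expansion in $1/n$; tracking the degrees through the recursion yields the stated upper bound $j_k \le \max\{i_{k+1}, i_{k+2}-1, \ldots, i_m - m + k + 1\}$ on the leading algebraic growth of the coefficient multiplying $\Delta^k a_n$.

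The final step is to invert $1 - B(n)$ and read off the $c_{k,j}$ from the quotient series. This is where the non-resonance assumption enters: when the inversion is carried out formally in powers of $1/n$, the leading constant seen by a candidate term scaling as $\Delta^k a_n \cdot n^l$ reduces to $1 - \sum_{k=1}^m l(l-1)\cdots(l-k+1)\, p_{k,0}$ evaluated at a sequence of integer values of $l$ that necessarily lie in $\{-1, 0, 1, 2, \ldots\}$. The hypothesis guarantees this factor is nonzero for every such $l$, so the formal asymptotic division is well defined and produces the coefficients $c_{k,j}$.

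The main obstacle I anticipate is the combinatorial bookkeeping of the summation-by-parts recursion: tracking which boundary terms at $n$ feed into which coefficient $A_\ell(n)$, and then verifying that the algebraic degree of $A_\ell(n)$ actually obeys the prescribed envelope rather than some weaker bound. I would organize this via an induction on $m-k$ (the number of remaining differences to peel off from the $a$-factor), and separately verify the edge case in which several of the $i_k$ achieve their maximum simultaneously to confirm that the max over $\{i_{k+1}, i_{k+2}-1, \ldots, i_m-m+k+1\}$ is the correct envelope and that no unexpected cancellation forces a strictly smaller $j_k$.
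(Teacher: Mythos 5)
The paper does not prove this statement: it is quoted verbatim as Theorem~2 of Levin and Sidi~\cite{Levin-Sidi-9-175-81} and used as a black box to justify the choice of remainder estimates, so there is no in-paper proof to compare your sketch against. Judged on its own terms, your outline follows the general lines of the original Levin--Sidi argument (iterated summation by parts, boundary terms at infinity killed by the hypothesized limits, non-resonance condition used to solve formally for the expansion coefficients), and your identification of where the factor $1-\sum_{k=1}^m l(l-1)\cdots(l-k+1)p_{k,0}$ arises is correct in spirit.

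The concrete gap is the ``reabsorption'' step. After you peel all $k$ differences off $a_j$ in $\sum_{j\ge n} p_k(j)\Delta^k a_j$, the residual tail is (up to index shifts and sign) $\sum_{j\ge n}\bigl[\Delta^k p_k(j)\bigr]a_j$, and $\Delta^k p_k(j)$ is a nonconstant function of $j$ with leading behaviour $O(j^{i_k-k})$; this sum is \emph{not} a scalar multiple of $r_n$, so the claimed closed identity $[1-B(n)]\,r_n=\sum_{\ell}A_\ell(n)\Delta^\ell a_n$ with a single scalar $B(n)$ does not hold. You must either iterate the substitution indefinitely --- producing a formal expansion whose asymptotic validity then has to be established separately --- or, as Levin and Sidi do, posit the ansatz $r_n=\sum_{k=0}^{m-1}\beta_k(n)\Delta^k a_n$, derive the exact difference relations the $\beta_k$ must satisfy, and prove that those relations admit solutions with Poincar\'e-type expansions, the non-resonance condition being precisely what makes the recursive solution for the $c_{k,j}$ possible at every order. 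Until that step is repaired, the degree bound $j_k\le\max\{i_{k+1},i_{k+2}-1,\ldots,i_m-m+k+1\}$ is asserted rather than derived, which you yourself flag as the unresolved bookkeeping.
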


In case $m=1$, Theorem~\ref{theorem:mthorderdifference} describes a family of sequences whose remainders possess an asymptotic expansion that depends linearly on $a_n$. This theorem justifies the remainder estimates of inverse power series as in the original Levin transformation~\cite{Levin-B3-371-73}, as well as the remainder estimates of the inverse factorial series in Eq.~\eqref{eq:inversefactorialLevin}. Theorem~\ref{theorem:mthorderdifference} does not rigorously justify the remainder estimates in Drummond's transformation. Note that if $a_n$ satisfies the conditions of Theorem~\ref{theorem:mthorderdifference}, then we may take $\omega_n = n^{j_0} a_n = n^{j_0} \Delta s_{n-1}$, where $j_0\le 1$. Since the leading coefficients in the inverse power series may turn out to be zero, this is a subset of the choice $\omega_n = (n+\gamma) a_n$. Several remainder estimates have appeared in the literature~\cite{Levin-B3-371-73,Weniger-10-189-89,Homeier-Weniger-92-1-95}, including:
\begin{equation}\label{eq:Omega}
\omega_n = \left\{\begin{array}{c} a_n = \Delta s_{n-1},\\ a_{n+1} = \Delta s_n,\\ (n+\gamma) a_n = (n+\gamma) \Delta s_{n-1},\\ \dfrac{a_na_{n+1}}{a_{n+1}-a_n} = \dfrac{\Delta s_{n-1} \Delta s_n}{\Delta^2 s_{n-1}},\end{array}\right.
\end{equation}
among others.

In the context of generalized hypergeometric functions, we observe that:
\[
\frac{a_{n+1}}{a_n} = \frac{(\alpha_1+n)\cdots(\alpha_p+n)}{(\beta_1+n)\cdots(\beta_q+n)}\frac{z}{n+1},
\]
and it is easily verified that the sequence satisfies the conditions of Theorem~\ref{theorem:mthorderdifference} with $m=1$, namely a first-order linear homogeneous difference equation with a coefficient that has a Poincar\'e-type asymptotic expansion as $n\to\infty$. In this way, $R_n^{(k)}$ is a theoretically acceptable transformation for the class of convergent hypergeometric functions while $T_n^{(k)}$ is not. Nevertheless, we shall continue to consider both methods and offer another justification in \S~\ref{subsection:convergencetheorems}.

In this work, we describe algorithms to compute both the Drummond and factorial Levin-type transformations in linear complexity with respect to the transformation order $k$ when $\omega_n/\omega_{n+1}$ and $a_n/\omega_n$ are rational. As will be seen below, these two conditions imply that $a_{n+1}/a_n$ is rational. Thus, they are a specialization on the class of sequences with rational successive ratios whose controlling factors also have rational successive ratios.
\begin{lemma}
If $\omega_n/\omega_{n+1}$ and $a_n/\omega_n$ are rational, then $a_{n+1}/a_n$ is also rational.
\end{lemma}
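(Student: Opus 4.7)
The plan is to write $a_{n+1}/a_n$ as a product of three factors, each of which is (the reciprocal of) one of the two given rational expressions, and then invoke closure of the field of rational functions under products and reciprocals.

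Specifically, I would start from the identity
\[
\frac{a_{n+1}}{a_n} = \frac{a_{n+1}}{\omega_{n+1}} \cdot \frac{\omega_{n+1}}{\omega_n} \cdot \frac{\omega_n}{a_n},
\]
which holds whenever none of the quantities $a_n, a_{n+1}, \omega_n, \omega_{n+1}$ vanish. By hypothesis, $\omega_{n+1}/\omega_n$ is the reciprocal of the rational function $\omega_n/\omega_{n+1}$, hence rational; the third factor $\omega_n/a_n$ is the reciprocal of the rational function $a_n/\omega_n$, hence rational; and for the first factor one applies the hypothesis $a_m/\omega_m$ rational with $m = n+1$, i.e.\ shifting the index, so $a_{n+1}/\omega_{n+1}$ is rational. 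A product of rational functions (in $n$) is again rational, which concludes the argument.

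The only step that deserves a remark is the index shift in the first factor: the hypothesis is phrased for a generic index, so replacing $n$ by $n+1$ in the rational expression $a_n/\omega_n$ still yields a rational function of $n$. There is no real obstacle here; the lemma is essentially a bookkeeping statement that sets up notation for the subsequent recurrences.
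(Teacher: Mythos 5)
Your proof is correct and uses the same telescoping identity $\frac{a_{n+1}}{a_n} = \frac{a_{n+1}}{\omega_{n+1}}\cdot\frac{\omega_{n+1}}{\omega_n}\cdot\frac{\omega_n}{a_n}$ as the paper, which writes it equivalently as a ratio of the shifted and unshifted $a_n/\omega_n$ times $\omega_{n+1}/\omega_n$. Your added remarks on the index shift and closure under products and reciprocals are sound but not substantively different from the paper's one-line argument.
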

\begin{proof}
This follows from:
\[
\dfrac{a_{n+1}}{a_n} = \dfrac{\dfrac{a_{n+1}}{\omega_{n+1}}}{\dfrac{a_n}{\omega_n}}\dfrac{\omega_{n+1}}{\omega_n}.
\]
\end{proof}
Recurrence relations have been developed~\cite{Weniger-10-189-89} to transform the sequences of partial sums $\{T_{n+\ell}^{(0)}\}_{\ell=0}^k$ and $\{R_{n+\ell}^{(0)}\}_{\ell=0}^k$ to the respective sequences $\{T_n^{(\ell)}\}_{\ell=0}^k$ and $\{R_n^{(\ell)}\}_{\ell=0}^k$. However, these only simplify the construction of binomial coefficients and do not reduce the complexity of the procedure.

\subsection{Convergence of the Drummond and Levin-type transformations applied to Euler's divergent series}\label{subsection:convergencetheorems}

We include here two remarkable results by Borghi and Weniger~\cite{Borghi-Weniger-94-149-15} on the resummation of Euler's divergent series:
\[
\sum_{k=0}^\infty k! z^k \sim \int_0^\infty \frac{e^{-t}}{1-zt}\ud t = \pFq{2}{0}(1,1;z),\quad{\rm for}\quad z\in\C\setminus(0,\infty).
\]
\begin{theorem}[Theorems 5.5 and 6.7 in~\cite{Borghi-Weniger-94-149-15}]\label{theorem:2F0convergence}
If $\omega_n = \Delta s_n$ and $\gamma=1$, then for every $z\in\C\setminus[0,\infty)$:
\begin{align*}
\pFq{2}{0}(1,1;z) - T_0^{(k)} & \sim \frac{2\pi}{z}\exp\left(\frac{-1}{z} - \frac{4k^{1/2}}{(-z)^{1/2}}\right),\\
\pFq{2}{0}(1,1;z) - R_0^{(k)} & \sim \frac{4\pi}{z}\exp\left(\frac{-1}{z} - \frac{9k^{2/3}}{2(-z)^{1/3}}\right)\cos\left(\frac{3^{3/2}k^{2/3}}{2(-z)^{1/3}}+\frac{\pi}{6}\right),\quad{\rm as}\quad k\to+\infty.
\end{align*}
\end{theorem}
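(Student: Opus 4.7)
The plan is to start from exact error identities that follow by linearity of $\Delta^k$ applied to Eq.~\eqref{eq:Drummond} and Eq.~\eqref{eq:factorialLevin}: writing $s_n = \pFq{2}{0}(1,1;z) - r_n$ gives
\[
\pFq{2}{0}(1,1;z) - T_0^{(k)} = -\frac{\Delta^k\bigl(r_n/\omega_n\bigr)\big|_{n=0}}{\Delta^k\bigl(1/\omega_n\bigr)\big|_{n=0}}, \qquad \pFq{2}{0}(1,1;z) - R_0^{(k)} = -\frac{\Delta^k\bigl((n+1)_{k-1}\,r_n/\omega_n\bigr)\big|_{n=0}}{\Delta^k\bigl((n+1)_{k-1}/\omega_n\bigr)\big|_{n=0}}.
\]
With $\omega_n = \Delta s_n = (n+1)!\,z^{n+1}$ and the Borel representation
\[
r_n = \int_0^\infty \frac{e^{-t}(zt)^{n+1}}{1-zt}\,\ud t, \qquad z\in\C\setminus[0,\infty),
\]
both numerators and denominators take the form of binomial-weighted integrals and sums that are ready for asymptotic analysis as $k\to\infty$.

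First I would interchange $\Delta^k$ with the Borel integral --- valid for $z$ off the positive real axis after a mild deformation of the $t$-contour --- and evaluate the resulting binomial sums. These sums can be identified with generalised Laguerre polynomials (or confluent hypergeometric functions): the Drummond denominator collapses to a Laguerre polynomial in $1/z$, whereas the factorial Levin-type denominator, thanks to the Pochhammer weight $(n+1)_{k-1}$, is a polynomial of effective degree $k$ sitting against an extra $t^{k-1}$ measure. Both numerators then assume the shape $\int_0^\infty e^{-t}\,\Phi_k(t,z)/(1-zt)\,\ud t$, with $\Phi_k$ an explicit Laguerre-type polynomial in $zt$, and classical uniform asymptotics for these polynomials reduce the whole question to a pair of saddle-point integrals whose phases differ only in leading order.

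The central step is then the steepest-descent analysis. For the Drummond case, the natural rescaling $t = (k/(-z))^{1/2}\tau$ produces a quadratic-in-$\tau$ phase with a single real saddle, yielding the rate $\exp(-4(k/(-z))^{1/2})$ together with the prefactor $2\pi/z$. For the factorial Levin-type, the extra Pochhammer factor raises the exponent of $t$ by $k-1$, so the correct rescaling becomes $t = (k/(-z))^{2/3}\tau$; the phase is now cubic, and its two complex-conjugate saddles contribute at the same exponential rate $\exp(-\tfrac{9}{2}(k^{2}/(-z))^{1/3})$ and interfere to produce the cosine factor with phase $\pi/6$ and doubled prefactor $4\pi/z$. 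The universal factor $\exp(-1/z)$ is delivered in both cases by the pole at $t=1/z$: the residue picked up during contour deformation dominates the bulk saddle contribution and, after dividing by the denominator's asymptotics, yields the stated prefactor.

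The hard part will be precisely this saddle-with-pole analysis, made uniform in $z\in\C\setminus[0,\infty)$. One must (i) identify the correct rescaling of $t$ ($k^{1/2}$ vs.\ $k^{2/3}$), (ii) locate the relevant saddles for every admissible $z$, (iii) deform the contour through them while accounting for the pole at $t=1/z$ and for the branches of $(-z)^{1/2}$ and $(-z)^{1/3}$ so that the phase $\pi/6$ emerges with the right sign, and (iv) verify that the ratio of numerator and denominator asymptotics produces exactly $2\pi/z$ and $4\pi/z$ respectively. Everything else --- the interchange of finite differences and integrals, the identification with Laguerre polynomials, and the extraction of leading exponential and Gaussian/Airy prefactors from the saddles --- is standard once this framework is in place.
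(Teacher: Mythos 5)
First, a point of comparison: the paper does not prove this statement at all --- it is imported verbatim as Theorems 5.5 and 6.7 of Borghi and Weniger, so there is no in-paper proof to measure your attempt against. Judged on its own terms, your outline does identify the framework actually used in that reference: the exact error identity $s-T_n^{(k)}=\Delta^k(r_n/\omega_n)/\Delta^k(1/\omega_n)$ (note you have a spurious minus sign here, since $s_n/\omega_n=s/\omega_n-r_n/\omega_n$ gives a $+$), the Borel representation $r_n=\int_0^\infty e^{-t}(zt)^{n+1}/(1-zt)\ud t$, the identification of the Drummond denominator with $L_k^{(1)}(1/z)$ (consistent with Lemma~\ref{lemma:DenominatorsAsTerminatingpFq} specialized to $\pFq{2}{0}(1,1;z)$), and the $k^{1/2}$ versus $k^{2/3}$ saddle scalings that distinguish the two rates.

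The genuine gap is in the mechanism you propose for the decay. You assert that the residue at $t=1/z$ ``dominates the bulk saddle contribution and, after dividing by the denominator's asymptotics, yields the stated prefactor.'' But that residue carries the factor $\Phi_k(1,z)$, i.e.\ the same Laguerre-type polynomial evaluated at $t=1/z$, which by Perron's formula grows like $\exp(2k^{1/2}(-z)^{-1/2})$ --- exactly the growth rate of the denominator $\Delta^k(1/\omega_n)$. Dividing one by the other gives an $O(1)$ ratio times $e^{-1/z}$, not the claimed $e^{-4k^{1/2}(-z)^{-1/2}}$ decay; so the step as written fails. The decay must instead come from the orthogonality structure (for the Drummond/Pad\'e case, the classical Stieltjes error formula puts the denominator polynomial in \emph{squared} form, whence the $4$ in the exponent rather than a $2$), or equivalently from an exact cancellation of the residue's leading growth against the bulk contribution that your sketch does not exhibit. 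The same issue recurs, in harder form, for the factorial Levin-type case, where in addition the two complex-conjugate saddles and the resulting cosine with phase $\pi/6$ are simply announced. Since you yourself defer precisely this uniform saddle-with-pole analysis as ``the hard part,'' the proposal is a plausible road map to the Borghi--Weniger argument rather than a proof, and its one concretely stated mechanism is not correct as stated.
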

On the negative real line, the dominant factors in the two convergence rates live in the argument of the exponential: Drummond's transformation has square root-exponential convergence while the factorial Levin-type transformation has an algebraic-exponential convergence closer to {\em bona fide} exponential convergence. Of secondary importance is that the constants in the convergence rates are $z$-dependent; in both cases, the convergence is faster for $z$ near the origin.

\subsection{Instability in the original formul\ae}

In the case of generalized hypergeometric functions, both transformations produce rational functions of type $(n+k,k)$ of the complex variable $z$ for any one of the remainder estimates in Eq.~\eqref{eq:Omega}. Direct use of Eqs.~\eqref{eq:Drummond} and~\eqref{eq:inversefactorialLevin} is cause for caution. For they both represent the rational approximations as ratios of polynomials in the monomial basis. However, there is no {\em a priori} reason to believe that the monomial basis is in any way optimal. If an appropriate remainder estimate is strictly alternating, then it is easy to confirm that the denominators in Eqs.~\eqref{eq:Drummond} and~\eqref{eq:inversefactorialLevin} are sums of terms of the same sign. This captures the best case scenario as subtractive cancellation may only occur in the numerators, and indeed it does if the sequence is strictly alternating. It turns out that this may still be quite severe because the numerators involve a double summation where inner summation is not simplified analytically when computing numerically. We call this a best case scenario because strict alternation in sign may only be true for evaluation of generalized hypergeometric functions on a semi-axis of the complex variable $z$, while we are naturally interested in the greater complex plane.

As will be seen in \S~\ref{section:linearcomplexity}, our fast algorithms define the numerators and denominators by recurrence relations whose lengths are independent of $n$ and $k$. Notwithstanding the reduction in quadratic to linear complexity, our fast algorithms have a profound impact on the numerical stability of the transformations as they use the successive numerators and denominators as their own polynomial bases. This is akin to computing orthogonal polynomials by their three-term recurrence relation as opposed to expansion in the monomial basis, which is unstable in general. While using the numerators and denominators as their own bases substantially improves of the stability of the algorithms, it does not guard against the real possibility of numerical overflow or underflow. Moreover, while rational functions in a compact subset of $\C$ may be bounded, their representative ratio of polynomials may have a large dynamic range. We further stabilize our algorithm in \S~\ref{subsubsection:PolynomialsToRationals} by converting our ratio of polynomials into a ratio of rationals, which has been observed in the AAA algorithm~\cite{Nakatsukasa-Sete-Trefethen-40-A1494-18} to be the best form for a rational approximant, among a ratio of polynomials, a ratio of rationals, and the partial fraction decomposition. With our stable ratio of rationals, a reasonable stopping criterion is proposed in \S~\ref{subsubsection:StoppingCriterion} that ascertains whether two successive transformations are numerically close. 

Figure~\ref{fig:pFqold2new} serves as a review of the aforementioned issues and a preview of their resolution by this work. In double precision, we consider the summation of a strictly alternating divergent series:
\begin{equation}\label{eq:2F011n2}
\pFq{2}{0}(1,1;-2) \approx 0.461\,455\,316\,241\,865\,234\ldots.
\end{equation}
where we have used a strictly alternating remainder estimate $\omega_n = a_{n+1}$, and parameters $\gamma = 2$ and $n=0$ in the Drummond and factorial Levin-type transformation. We observe that the original formul\ae~may come close to a good approximation, but soon the instabilities in Eqs.~\eqref{eq:Drummond} and~\eqref{eq:inversefactorialLevin} quickly lead to factorial divergence. On the other hand, the new methods demonstrate the algebraic-exponential convergence of Theorem~\ref{theorem:2F0convergence} with high fidelity and rounding errors that approximately accumulate at the much slower rate of $\OO(k\epsilon_{\rm mach})$.

\begin{figure}[htbp]
\begin{center}
\begin{tabular}{cc}
\includegraphics[width=0.485\textwidth]{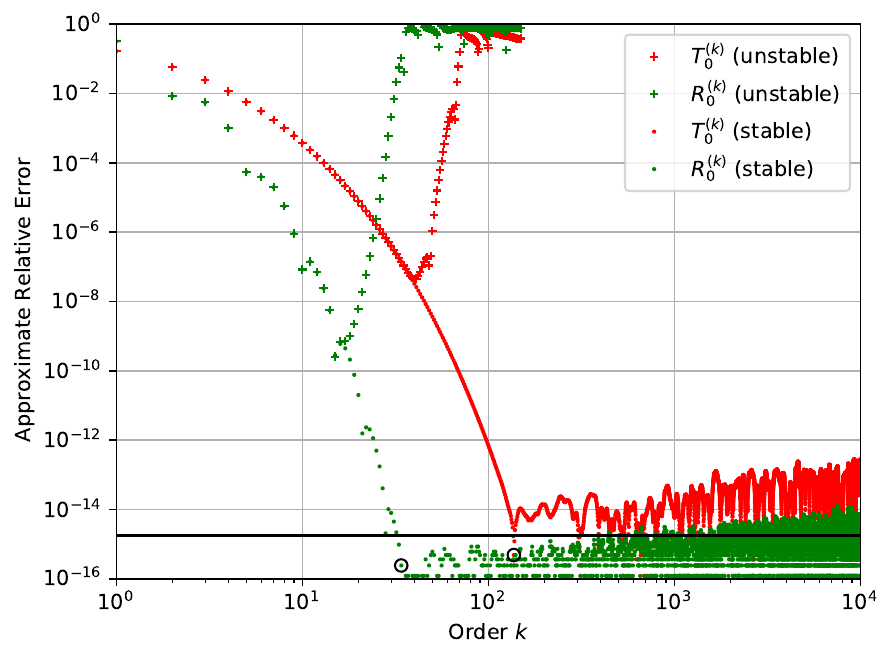}&
\includegraphics[width=0.485\textwidth]{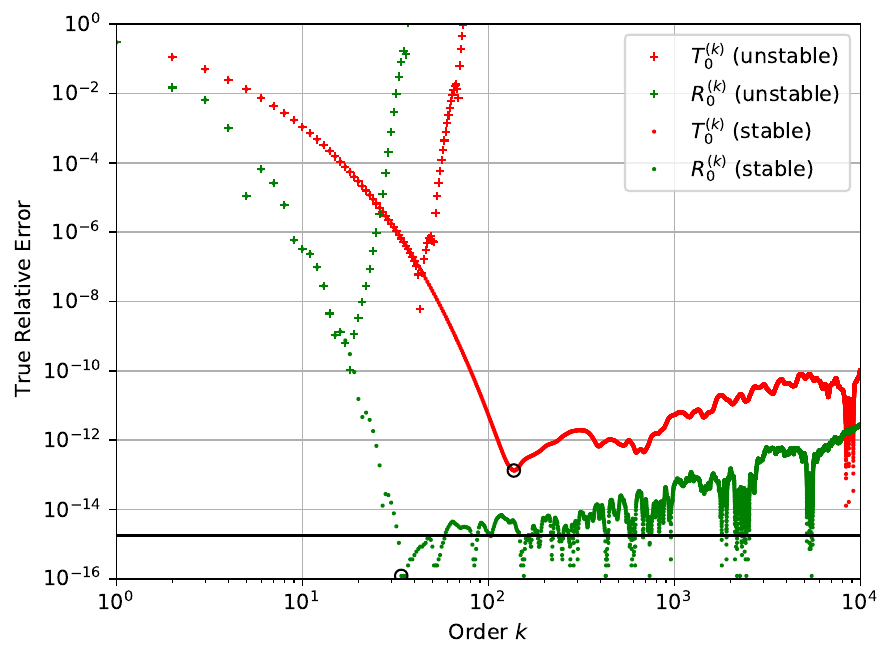}
\end{tabular}
\end{center}
\caption{Comparison of direct implementation of the Drummond and factorial Levin-type sequence transformations, labeled unstable, and the linear complexity recurrence relations derived in this work, labeled stable. In the left panel, the approximate relative error is that computed by using the next order transformation as the ``true'' value, whereas in the right panel the true relative error uses Eq.~\eqref{eq:2F011n2}, whose listed digits are correctly rounded (and truncated). In both panels, the black line indicates $8\epsilon_{\rm mach} \approx 1.77\times10^{-15}$ in double precision. The reasonable stopping criterion of \S~\ref{subsubsection:StoppingCriterion} selects the two values that are circled, $T_0^{(137)}$ and $R_0^{(34)}$, which are ostensibly, according to Eq.~\eqref{eq:StoppingCriterion}, the second values of the respective transformations to fall below the threshold of $8\epsilon_{\rm mach}$ in the left panel. As can be seen in the right panel, it provides no guarantee on the true relative error.}
\label{fig:pFqold2new}
\end{figure}

\section{Linear complexity recurrence relations}\label{section:linearcomplexity}

Recall the finite product rule:
\begin{equation}
\Delta^k(f_ng_n) = \sum_{j=0}^k \binom{k}{j}\Delta^j f_{n+k-j} \Delta^{k-j}g_n.
\end{equation}

\subsection{Drummond's sequence transformation}

\begin{theorem}\label{theorem:DrummondRecurrence}
Let $\omega_n/\omega_{n+1}$ and $a_n/\omega_n$ be rational. Then there exist recurrence relations for the numerator and denominators of Drummond's sequence transformation, whose length is independent of $n$ and $k$.
\end{theorem}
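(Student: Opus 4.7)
The plan is to exploit the rational structure of $\omega_n/\omega_{n+1}$ and $a_n/\omega_n$ to convert the definitions $D_n^{(k)} = \Delta^k(1/\omega_n)$ and $N_n^{(k)} = \Delta^k(s_n/\omega_n)$ into finite-length recurrences, by first producing an algebraic identity with polynomial coefficients and then applying $\Delta^k$ via the Leibniz product rule recalled at the start of the section.

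First I would write $\omega_n/\omega_{n+1} = u(n)/v(n)$ for coprime polynomials $u, v$ whose degrees are fixed by the sequence, so that the identity $v(n)/\omega_n = u(n)/\omega_{n+1}$ holds on the nose. Applying $\Delta^k$ to both sides and invoking Leibniz on each product, every term with $j > \deg v$ (respectively $j > \deg u$) vanishes because the finite differences of a polynomial terminate at its degree, leaving
\[
\sum_{j=0}^{\deg v} \binom{k}{j} [\Delta^j v](n+k-j)\, D_n^{(k-j)} = \sum_{j=0}^{\deg u} \binom{k}{j} [\Delta^j u](n+k-j)\, D_{n+1}^{(k-j)}.
\]
This is a relation of length at most $\deg u + \deg v + 2$, a constant in $n$ and $k$. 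To reduce it to a recurrence at a single node, I would substitute $D_{n+1}^{(m)} = D_n^{(m+1)} + D_n^{(m)}$ (which is just the definition of $\Delta$) everywhere on the right and solve for the highest-order term $D_n^{(k+1)}$, provided $u(n+k) \ne 0$.

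For the numerator, combining $s_{n+1} = s_n + a_{n+1}$ with the same identity gives
\[
u(n)\frac{s_{n+1}}{\omega_{n+1}} - v(n)\frac{s_n}{\omega_n} = u(n)\frac{a_{n+1}}{\omega_{n+1}}.
\]
Writing $a_n/\omega_n = A(n)/B(n)$ for polynomials $A, B$ and multiplying through by $B(n+1)$ clears the residual rational denominator: the right-hand side becomes the honest polynomial $u(n) A(n+1)$, whose $\Delta^k$ is elementary and in fact vanishes once $k > \deg u + \deg A$. Applying $\Delta^k$ to the left-hand side by Leibniz, exactly as in the denominator case, produces a finite linear combination of $N_n^{(k-j)}$ and $N_{n+1}^{(k-j)}$ with $j$ bounded by $\deg(vB)$ and $\deg(uB)$ respectively. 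Eliminating each $N_{n+1}^{(m)}$ via $N_{n+1}^{(m)} = N_n^{(m+1)} + N_n^{(m)}$ collapses this to a recurrence purely among the $N_n^{(\ell)}$, again of length independent of $n$ and $k$.

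The main obstacle I anticipate is bookkeeping rather than ideas: tracking the shifted polynomial coefficients and their successive finite differences through the Leibniz expansions, and verifying that the leading coefficient of the target term $D_n^{(k+1)}$ (respectively $N_n^{(k+1)}$) is generically nonzero so that the recurrence can in fact be solved for it. Closing the argument then requires only prescribing a constant number of boundary values $\{D_n^{(\ell)}, N_n^{(\ell)}\}$ for small $\ell$, computed directly from their defining finite differences.
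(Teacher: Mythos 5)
Your proposal is correct and takes essentially the same route as the paper: a polynomial identity relating $D_{n+1}^{(0)}$ to $D_n^{(0)}$ (resp.\ $N_{n+1}^{(0)}$ to $N_n^{(0)}$, with a polynomial inhomogeneity from $a_{n+1}/\omega_{n+1}$) derived from the rationality hypotheses, followed by $\Delta^k$ and the Leibniz rule, with the sums truncating at the fixed polynomial degrees and the substitution $D_{n+1}^{(m)} = D_n^{(m+1)}+D_n^{(m)}$ collapsing everything to a single $n$. The only blemish is a trivial labelling slip --- from $\omega_n/\omega_{n+1}=u(n)/v(n)$ one gets $u(n)/\omega_n = v(n)/\omega_{n+1}$ rather than the identity you state, so your $u$ and $v$ are interchanged relative to your own definition --- but this propagates consistently and does not affect the argument.
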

\begin{proof}
In the case of the denominator, we begin with the initial conditions:
\[
D_n^{(0)} = \dfrac{1}{\omega_n},\qquad D_{n+1}^{(0)} = \dfrac{1}{\omega_{n+1}},
\]
so that:
\[
D_{n+1}^{(0)} = \dfrac{\omega_n}{\omega_{n+1}}D_n^{(0)}.
\]
As $\omega_n/\omega_{n+1}$ is rational, we express it as the ratio of two polynomials, $q_n/p_n$, where $\deg(p_n) = p$ and $\deg(q_n) = q$, so that:
\[
p_nD_{n+1}^{(0)} = q_nD_n^{(0)}.
\]
Applying $\Delta^k$ to both sides and using $D_n^{(k+1)} = \Delta D_n^{(k)}$ we find:
\begin{equation}\label{eq:DrummondDenominatorRecurrence}
\sum_{j=0}^{\min\{k,p\}} \binom{k}{j} \Delta^j p_{n+k-j} D_{n+1}^{(k-j)} = \sum_{j=0}^{\min\{k,q\}} \binom{k}{j} \Delta^j q_{n+k-j} D_n^{(k-j)},
\end{equation}
where polynomial coefficients living in the kernel of sufficiently high order finite differences is cause for the minima in the summation indices.

Since, on the left-hand side, $D_{n+1}^{(k-j)} = \Delta D_n^{(k-j)} + D_n^{(k-j)} = D_n^{(k+1-j)} + D_n^{(k-j)}$, the recurrence relation only involves the subsequence $\{D_n^{(k-j)}\}_{j=-1}^{\min\{k,\max\{p,q\}\}}$, with fixed $n$.

As for the numerator sequence, the initial conditions read:
\[
N_n^{(0)} = \dfrac{s_n}{\omega_n},\qquad N_{n+1}^{(0)} = \dfrac{s_{n+1}}{\omega_{n+1}} = \dfrac{\omega_n}{\omega_{n+1}}\dfrac{s_n}{\omega_n} + \dfrac{a_{n+1}}{\omega_{n+1}},
\]
so that:
\[
N_{n+1}^{(0)} = \dfrac{\omega_n}{\omega_{n+1}}N_n^{(0)} + \dfrac{a_{n+1}}{\omega_{n+1}}.
\]
Thus, there exist polynomials $u_n$, $v_n$, and $w_n$ such that:
\[
u_nN_{n+1}^{(0)} = v_nN_n^{(0)} + w_n.
\]
By the same technique for the denominator sequence:
\begin{equation}\label{eq:DrummondNumeratorRecurrence}
\sum_{j=0}^{\min\{k,u\}} \binom{k}{j} \Delta^j u_{n+k-j} N_{n+1}^{(k-j)} = \sum_{j=0}^{\min\{k,v\}} \binom{k}{j} \Delta^j v_{n+k-j} N_n^{(k-j)} + \Delta^k w_n,
\end{equation}
where $\deg(u_n)=u$ and $\deg(v_n)=v$. The inhomogeneity is present for $k\le\deg(w_n) = w$.
\end{proof}


Some controlling factors allow one to relate $u_n$, $v_n$ and $w_n$ directly to $p_n$ and $q_n$. One such choice is the remainder estimate $\omega_n = \Delta s_n = a_{n+1}$, so that the numerator recurrence becomes:
\[
p_nN_{n+1}^{(0)} = q_nN_n^{(0)} + q_n.
\]
Another reasonable choice is the remainder estimate $\omega_n = \Delta s_{n-1} = a_n$. In this case, $a_{n+1}/\omega_{n+1} = 1$, and the numerator recurrence becomes:
\[
p_nN_{n+1}^{(0)} = q_nN_n^{(0)} + p_n.
\]
The other two choices in Eq.~\eqref{eq:Omega} are acceptable, though the polynomial coefficients are more involved.

\subsubsection{Computational considerations}

A direct use of the recurrence relations for the numerator and denominator sequences in Eqs.~\eqref{eq:DrummondDenominatorRecurrence} and~\eqref{eq:DrummondNumeratorRecurrence} would cost $\OO[\max\{p,q,u,v,w\}^2(n+k)]$ flops to compute the first $k+1$ terms, as a result of computing the binomial and variable coefficients, such as $\{\Delta^j p_{n+k-j}\}_{j=0}^p$, directly. We will now show that the product of binomial and variable coefficients may be obtained by an incremental calculation that reduces the overall complexity to $\OO[\max\{p,q,u,v,w\}(n+k)]$ flops. We will illustrate the method for the denominator recurrence as it applies equally to the numerator recurrence.

Defining:
\begin{equation}\label{eq:DrummondCoefficients}
p_{n,j}^{(k)} = \binom{k}{j}\Delta^j p_{n+k-j},\quad{\rm and}\quad q_{n,j}^{(k)} = \binom{k}{j}\Delta^j q_{n+k-j},
\end{equation}
then Eq.~\eqref{eq:DrummondDenominatorRecurrence} is equal to:
\[
\sum_{j=0}^{\min\{k,p\}} p_{n,j}^{(k)} D_{n+1}^{(k-j)} = \sum_{j=0}^{\min\{k,q\}} q_{n,j}^{(k)} D_n^{(k-j)}.
\]
By induction and the use of the finite difference operator, it follows that:
\begin{equation}\label{eq:DrummondCoefficientsByInduction}
p_{n,j}^{(k)} = p_{n+1,j}^{(k-1)} + \Delta p_{n, j-1}^{(k-1)},\quad{\rm and}\quad q_{n,j}^{(k)} = q_{n+1,j}^{(k-1)} + \Delta q_{n, j-1}^{(k-1)},
\end{equation}
where it is understood that $p_{n,j}^{(k)} = q_{n,j}^{(k)} = 0$ if $j < 0$ or $j>k$. However, more efficient recurrence relations for Eq.~\eqref{eq:DrummondCoefficients} may be utilized that reduce the flops and extra memory requirements:
\begin{equation}\label{eq:DrummondCoefficientsbyDeus}
j p_{n,j}^{(k)} = (k-j+1) p_{n,j-1}^{(k)} - k p_{n, j-1}^{(k-1)},\quad{\rm and}\quad j q_{n,j}^{(k)} = (k-j+1) q_{n,j-1}^{(k)} - k q_{n, j-1}^{(k-1)},
\end{equation}
Notice that Eq.~\eqref{eq:DrummondCoefficientsByInduction} requires evaluating $p_{n+1,j}^{(k-1)}$ and also $p_{n+1,j-1}^{(k-1)}$ through the finite difference, which requires the evaluation of $p_{n,j}^{(k)}$ for different values of $n$. On the other hand, once $p_{n,0}^{(k)} = p_{n+k}$ is computed, incrementing $j$ via Eq.~\eqref{eq:DrummondCoefficientsbyDeus} is a linear combination of two $p_{n,j}^{(k)}$ with the same index $n$. This ensures that the $k^{\rm th}$ row in the table of $p_{n,j}^{(k)}$ may be computed in $\OO(p)$ flops. We find that the same argument holds for $q_n$, $u_n$, and $v_n$.

The computational pattern in the numerical implementation of Eq.~\eqref{eq:DrummondCoefficientsbyDeus} is pictured in Figure~\ref{fig:DrummondComputationalPattern}.
\begin{figure}[htbp]
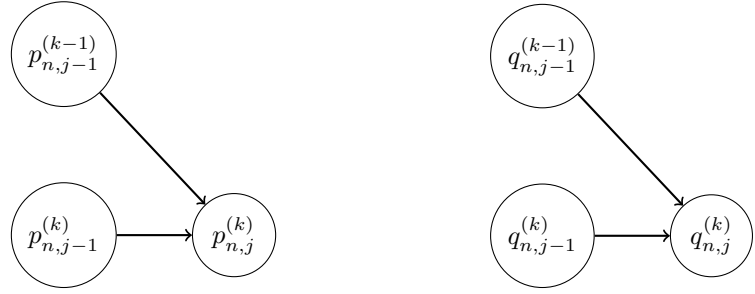

\begin{center}
\begin{tabular}{ccc}
\tikz {
  \node [circle,draw] (A) {$p_{n,j-1}^{(k-1)}$};
  \node [circle,draw] (B) [below=of A] {$p_{n,j-1}^{(k)}$};
  \node [circle,draw] (C) [right=of B] {$p_{n,j}^{(k)}$};
  \draw [draw = black, thick, ->]
    (A) edge (C)
    (B) edge (C);
}& \hspace{2cm} &
\tikz {
  \node [circle,draw] (A) {$q_{n,j-1}^{(k-1)}$};
  \node [circle,draw] (B) [below=of A] {$q_{n,j-1}^{(k)}$};
  \node [circle,draw] (C) [right=of B] {$q_{n,j}^{(k)}$};
  \draw [draw = black, thick, ->]
    (A) edge (C)
    (B) edge (C);
}\\
\end{tabular}
\caption{The computational pattern of Eq.~\eqref{eq:DrummondCoefficientsbyDeus}.}
\label{fig:DrummondComputationalPattern}
\end{center}
\end{figure}

\subsubsection{From a ratio of polynomials to a ratio of rationals}\label{subsubsection:PolynomialsToRationals}

As the resulting approximation is rational, we may rescale the sequence of numerators and denominators by the same power of the floating-point base to insure against overflow and underflow in the working precision to bring the magnitudes back into the representable range while not introducing any further rounding errors. This method, however, introduces a branch in the iteration and depends on floating-point implementation details, which are not universally portable. As such, we rely on another technique that also guards against overflow and underflow.

Consider $\mu_n^{(k)}$ to be the reciprocal ratio of successive denominators:
\[
\mu_n^{(k)} = \frac{D_n^{(k-1)}}{D_n^{(k)}},\qquad\mu_n^{(0)} = \frac{1}{D_n^{(0)}}.
\]
Then Eq.~\eqref{eq:DrummondDenominatorRecurrence} is equivalent to:
\[
\sum_{j=0}^{\min\{k,p\}} p_{n,j}^{(k)} \left(\frac{1}{\mu_n^{(k-j+1)}}+1\right)\mu_n^{(k)}\cdots\mu_n^{(k-j+1)} = \sum_{j=0}^{\min\{k,q\}} q_{n,j}^{(k)}\mu_n^{(k)}\cdots\mu_n^{(k-j+1)}.
\]
Notice that both of these summations may be computed by a generalized Horner scheme~\cite{Horner-109-308-19}:
\begin{align*}
&\sum_{j=0}^{\min\{k,p\}} p_{n,j}^{(k)} \left(\frac{1}{\mu_n^{(k-j+1)}}+1\right)\mu_n^{(k)}\cdots\mu_n^{(k-j+1)}\\ 
& = p_{n,0}^{(k)}\left(\frac{1}{\mu_n^{(k+1)}}+1\right) + \left(p_{n,1}^{(k)}\left(\mu_n^{(k)}+1\right) + \mu_n^{(k)}\left(p_{n,2}^{(k)}\left(\mu_n^{(k-1)}+1\right)+\cdots\right)\right),\\
& \sum_{j=0}^{\min\{k,q\}} q_{n,j}^{(k)}\mu_n^{(k)}\cdots\mu_n^{(k-j+1)} = q_{n,0}^{(k)} + \mu_n^{(k)}\left(q_{n,1}^{(k)} + \mu_n^{(k-1)}\left(q_{n,2}^{(k)} + \cdots\right)\right).
\end{align*}
With the ratio of successive denominators in hand, the numerator recurrence in Eq.~\eqref{eq:DrummondNumeratorRecurrence} may be transformed into a recurrence relation for the transformation $T_n^{(k)}$ itself:
\[
\sum_{j=0}^{\min\{k,u\}} u_{n,j}^{(k)} \left(\frac{T_n^{(k-j+1)}}{\mu_n^{(k-j+1)}}+T_n^{(k-j)}\right)\mu_n^{(k)}\cdots\mu_n^{(k-j+1)} = \sum_{j=0}^{\min\{k,v\}} v_{n,j}^{(k)}T_n^{(k-j)}\mu_n^{(k)}\cdots\mu_n^{(k-j+1)} + \left(\Delta^k w_n\right) \mu_n^{(k)}\cdots\mu_n^{(0)}.
\]
A generalized Horner scheme is also effective in the numerical evaluation of these sums.

To see why this formulation as a ratio of rationals is effective at deferring overflow, we observe that the more rapidly $\mu_n^{(k)}$ decays, the more damped are the successive terms in the summations. Therefore, a consistent asymptotic approximation has the leading term:
\[
\frac{1}{\mu_n^{(k+1)}} \sim \frac{q_{n,0}^{(k)}-p_{n,0}^{(k)}-p_{n,1}^{(k)}}{p_{n,0}^{(k)}},\quad{\rm as}\quad k\to\infty.
\]
As the coefficients are polynomials, the growth rate of this asymptotic approximation is substantially lower than the successive denominators themselves, which would grow factorially as they are recovered by the product of the successive ratios. In finite precision, this usually defers overflow far beyond an order $k$ where convergence sets in.

\subsubsection{A reasonable stopping criterion}\label{subsubsection:StoppingCriterion}

We have found that by virtue of the increased numerical stability in the linear complexity recurrence relations, a reasonable stopping criterion is useful in returning approximate numerical results. Given a small tolerance, say ${\tt tol} = 8\epsilon_{\rm mach}$, if:
\begin{equation}\label{eq:StoppingCriterion}
|T_n^{(k)}-T_n^{(k-1)}| \le \max\{|T_n^{(k)}|, |T_n^{(k-1)}|\} {\tt tol},
\end{equation}
then we return the value $T_n^{(k)}$ as the output of the algorithm. We also insure against false convergence of the algorithm, which we have found may occur for $k\le r+2$, where successive transformations may be numerically identical though are far from the limit of successive approximations.

There is of course no guarantee of convergence in the use of Eq.~\eqref{eq:StoppingCriterion}: in particular, the stopping criterion only implies that two successive transformations are relatively close. For any particularly ill-conditioned input sequence, rounding errors may be so amplified that the stopping criterion is never satisfied. Hence, it is necessary to also provide a maximum type of rational approximation, say $(n+k_{\rm max}, k_{\rm max})$ where $k_{\rm max} = 1,048,576$. Such a maximum type only affects the maximum computational time, not the memory requirements; hence, the setting of $k_{\rm max}$ is provided as an option with a default to the user. When the maximum type is reached, the output is returned with a warning. With such a stopping criterion, the algorithm may be easily adapted to return guaranteed results in an extended precision environment such as MPFR~\cite{Fousse-et-al-33-13-1-07} or Arb~\cite{Johansson-66-1281-17}: for a target result of $p$ correct bits, one runs the algorithm with $q>2p$ and $2q$ bits, doubling $q$, until the first $p$ bits of both results agree. As the method is generic to the data types of the parameters and the variable $z$, more sophisticated techniques involving interval and ball arithmetic can be used to increase the required internal precision by just as much as necessary.

\subsection{A factorial Levin-type sequence transformation}

\begin{theorem}\label{theorem:FactorialLevinRecurrence}
Let $\omega_n/\omega_{n+1}$ and $a_n/\omega_n$ be rational. Then there exist recurrence relations for the numerator and denominators of Levin's factorial sequence transformation, whose length is independent of $n$ and $k$.
\end{theorem}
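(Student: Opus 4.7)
The plan is to mirror the proof of Theorem~\ref{theorem:DrummondRecurrence}, adapted for the complication that the factor $(n+\gamma)_{k-1}$ inside the $k$-th finite difference depends on $k$. This breaks the telescoping $D_n^{(k+1)}=\Delta D_n^{(k)}$ that drove the Drummond argument, so I will carry a bounded collection of auxiliary sequences alongside $Q_n^{(k)}$.

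Set $g_n^{(k)} = (n+\gamma)_{k-1}/\omega_n$, so that $Q_n^{(k)}=\Delta^k g_n^{(k)}$. Using $(n+\gamma)_k = (n+\gamma+k-1)(n+\gamma)_{k-1}$ together with the hypothesis $\omega_n/\omega_{n+1}=q_n/p_n$ yields the basic rational identity
\[
(n+\gamma)p_n\,g_{n+1}^{(k)} = (n+\gamma+k-1)q_n\,g_n^{(k)},
\]
whose polynomial coefficients have degrees $p+1$ and $q+1$ in $n$. Applying $\Delta^k$ with the finite product rule and substituting $\Delta^{k-j}g_{n+1}^{(k)} = \Delta^{k-j}g_n^{(k)}+\Delta^{k-j+1}g_n^{(k)}$ gathers the identity at the fixed base point $n$ as a linear relation among $\Delta^m g_n^{(k)}$ for $m=k-r,\ldots,k+1$ with $r=\max\{p,q\}+1$. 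These mixed-order differences are \emph{not} of the form $Q_n^{(\ell)}$ because the superscripts of $\Delta$ and $g$ no longer match.

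To close the system, introduce the auxiliary sequences $\tilde G_n^{(k,i)} := \Delta^{k-i}g_n^{(k)}$ for $i=0,\ldots,r$, noting that $\tilde G_n^{(k,0)}=Q_n^{(k)}$ and $\Delta^{k+1}g_n^{(k)}=\Delta Q_n^{(k)}=Q_{n+1}^{(k)}-Q_n^{(k)}$. The Pochhammer identity $g_n^{(k+1)}=(n+\gamma+k-1)g_n^{(k)}$ combined with the finite product rule applied to $\Delta^{k+1-i}[(n+\gamma+k-1)g_n^{(k)}]$ yields two-term $k$-recurrences
\[
\tilde G_n^{(k+1,i)} = (n+2k-i+\gamma)\,\tilde G_n^{(k,i-1)} + (k+1-i)\,\tilde G_n^{(k,i)},\qquad i\ge 1,
\]
and at $i=0$ the pure Pochhammer relation
\[
Q_n^{(k+1)} = (n+2k+\gamma)\,Q_{n+1}^{(k)} - (n+k+\gamma-1)\,Q_n^{(k)}.
\]
The only obstruction to a fixed-$n$ recurrence is the shifted value $Q_{n+1}^{(k)}$; rearranging the $\Delta^k$-expansion of the main rational identity expresses it as a linear combination of $Q_n^{(k)}$ and $\tilde G_n^{(k,1)},\ldots,\tilde G_n^{(k,r)}$, with only $r+1$ terms surviving by the polynomial degree bounds on the coefficients. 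Substituting back produces an update rule for the state vector $(Q_n^{(k)},\tilde G_n^{(k,1)},\ldots,\tilde G_n^{(k,r)})$ of bounded depth, independent of $n$ and $k$.

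For the numerator $P_n^{(k)}=\Delta^k[g_n^{(k)}s_n]$, the same machinery applies to the factorial-weighted analogue $(n+\gamma)u_n\,h_{n+1}^{(k)} = (n+\gamma+k-1)v_n\,h_n^{(k)} + (n+\gamma)_k\,w_n$, with $h_n^{(k)}=g_n^{(k)}s_n$ and $u_n,v_n,w_n$ the polynomials from the Drummond numerator argument; the inhomogeneous term $(n+\gamma)_k w_n$ collapses to a polynomial of degree $\deg(w_n)$ after $\Delta^k$, perturbing the recurrence only by a closed-form inhomogeneity without inflating its depth. The main obstacle I foresee is the careful bookkeeping of the $k$-dependent factor $(n+\gamma+k-1)$ in the polynomial coefficients and verifying that eliminating $Q_{n+1}^{(k)}$ via the rational identity introduces no hidden $k$-growth, so that the final bound $r+1=\max\{p,q\}+2$ is confirmed.
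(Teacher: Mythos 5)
Your argument is sound and reaches the theorem's conclusion, but by a genuinely different route from the paper. The paper never works with the raw kernel $g_n^{(k)}=(n+\gamma)_{k-1}/\omega_n$ and its $k$-dependent coefficient $(n+\gamma+k-1)$; instead it keeps the identity at the level of $Q_n^{(0)}$ and applies the operator $\Delta^k\left[(n+\gamma)_{k-r-1}\,\diamond\right]$ for a tunable $r$, so that the Leibniz rule produces bounded-order differences $\Delta^{r+1-j}Q_{n+1}^{(k-r-1)}$ and $\Delta^{r-j}Q_n^{(k-r)}$ of the denominators themselves (Eq.~\eqref{eq:WenigerDenominatorFiniteRecurrence}); it then invests in a pair of lemmas --- the operator identity \eqref{eq:WenigerRecurrence}, its iterate \eqref{eq:LevinForwardTransform}, and the inverse \eqref{eq:LevinBackwardTransform} obtained from a generalized binomial transform and Chu--Vandermonde (Lemmata~\ref{lemma:InverseGeneralizedBinomialTransform} and~\ref{lemma:LevinTransforms}) --- to rewrite those differences as linear combinations of the $Q_n^{(k-j)}$, $j=-1,\ldots,r+1$. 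You instead accept the $k$-dependent coefficient, enlarge the state to the intermediate differences $\tilde G_n^{(k,i)}=\Delta^{k-i}g_n^{(k)}$, and close the system using only the one-step Pochhammer identity $g_n^{(k+1)}=(n+\gamma+k-1)g_n^{(k)}$, Leibniz for a linear factor, and elimination of the top difference $\Delta^{k+1}g_n^{(k)}$ from the rational identity (the pivot $(n+k+\gamma)p_{n+k}$ is generically nonzero, exactly the same division the paper performs to advance its recurrence). Both yield bounded length, and your two-term update for $\tilde G_n^{(k+1,i)}$ is verifiably correct. What each buys: your route is more elementary, dispensing entirely with the binomial-transform inversion; the paper's route yields a recurrence purely among the $Q_n^{(\ell)}$, which is what feeds the coefficient recurrences \eqref{eq:WenigerPbyDeus}--\eqref{eq:WenigerQbyDeus2} and the ratio-of-rationals stabilization of \S~\ref{subsubsection:PolynomialsToRationals}; converting your first-order vector recurrence into such a scalar relation would need a further elimination (though its existence, with order at most $r+1$, follows by standard arguments). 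Two small points to tidy up: your state components $\Delta^{k-i}g_n^{(k)}$ are undefined for $k<i$, so the recurrence must be seeded at $k=r$ by direct computation (the paper faces an analogous small-$k$ issue, which it resolves by running with $r=k-1$ and noting the potential singularity of $(n+\gamma)_{k-r-1}$ for $\gamma\in\N$); and your inhomogeneity $\Delta^k[(n+\gamma)_k w_n]$ persists as a degree-$w$ polynomial for all $k$, whereas the paper's choice $r^*\ge w$ makes its inhomogeneity vanish identically once $k\ge r^*+1$ --- harmless for the existence claim, but worth noting.
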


Before the proof, we state a number of partial results. By Eq.~\eqref{eq:factorialLevin}, it follows that:
\begin{equation}\label{eq:WenigerQ0}
Q_n^{(0)} = \frac{1}{(n+\gamma-1)\omega_n},
\end{equation}
and:
\begin{equation}\label{eq:WenigerQFrom0}
Q_n^{(k)} = \Delta^k\left[(n+\gamma-1)_k Q_n^{(0)}\right].
\end{equation}

\begin{lemma}
Denominators in Levin's factorial sequence transformation satisfy:
\begin{equation}\label{eq:WenigerRecurrence}
Q_n^{(k+1)} = \left[(n+\gamma+2k)\Delta + (k+1)I\right] Q_n^{(k)},
\end{equation}
and:
\begin{equation}
\Delta^s Q_n^{(k+1)} = \left[(n+\gamma+2k+s)\Delta + (k+s+1)I\right]\Delta^sQ_n^{(k)},
\end{equation}
\end{lemma}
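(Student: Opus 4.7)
The plan is to work directly from the representation (\ref{eq:WenigerQFrom0}), namely $Q_n^{(k)} = \Delta^k\bigl[(n+\gamma-1)_k\, Q_n^{(0)}\bigr]$, and to exploit the fact that the factor that gets appended when stepping from $k$ to $k+1$ is affine in $n$, so that the Leibniz rule for forward differences truncates after two terms.

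First I would set $f_n := (n+\gamma-1)_k\, Q_n^{(0)}$ so that $Q_n^{(k)} = \Delta^k f_n$ and $\Delta Q_n^{(k)} = \Delta^{k+1} f_n$. Using the telescoping identity $(n+\gamma-1)_{k+1} = (n+\gamma+k-1)(n+\gamma-1)_k$, I would rewrite
\[
Q_n^{(k+1)} = \Delta^{k+1}\bigl[(n+\gamma+k-1)\, f_n\bigr].
\]
Since $u_n := n+\gamma+k-1$ satisfies $\Delta u_n = 1$ and $\Delta^j u_n = 0$ for $j\ge 2$, the Leibniz rule
\[
\Delta^{k+1}(u_n f_n) = \sum_{j=0}^{k+1} \binom{k+1}{j} (\Delta^j u_{n+k+1-j})(\Delta^{k+1-j} f_n)
\]
collapses to the $j=0$ and $j=1$ contributions, giving
\[
Q_n^{(k+1)} = (n+\gamma+2k)\,\Delta^{k+1} f_n + (k+1)\,\Delta^k f_n = (n+\gamma+2k)\,\Delta Q_n^{(k)} + (k+1)\, Q_n^{(k)},
\]
which is precisely the first identity $Q_n^{(k+1)} = [(n+\gamma+2k)\Delta + (k+1)I]\, Q_n^{(k)}$.

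For the shifted version, I would simply apply $\Delta^s$ to the identity just proved and commute $\Delta^s$ with the scalar $(k+1)$; the remaining task is to evaluate $\Delta^s\bigl[(n+\gamma+2k)\,\Delta Q_n^{(k)}\bigr]$. Here the Leibniz rule again truncates because $n+\gamma+2k$ is affine in $n$, producing
\[
\Delta^s\bigl[(n+\gamma+2k)\,\Delta Q_n^{(k)}\bigr] = (n+\gamma+2k+s)\,\Delta^{s+1}Q_n^{(k)} + s\,\Delta^s Q_n^{(k)}.
\]
Combining with the $(k+1)\,\Delta^s Q_n^{(k)}$ term collects into $(k+s+1)\,\Delta^s Q_n^{(k)}$ and yields the stated shifted identity.

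There is no real obstacle: the only thing to be careful about is the bookkeeping of the shifts in the Leibniz formula (the $n+k+1-j$ versus $n+j$ convention) and the choice of the correct factorization of $(n+\gamma-1)_{k+1}$, since the alternative $(n+\gamma-1)(n+\gamma)_k$ would complicate the induction. Choosing the factor $(n+\gamma+k-1)$ on the right is what makes $\Delta^{k+1} f_n$ and $\Delta^k f_n$ appear cleanly and match the definitions of $\Delta Q_n^{(k)}$ and $Q_n^{(k)}$.
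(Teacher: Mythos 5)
Your proposal is correct and follows essentially the same route as the paper: factor $(n+\gamma-1)_{k+1}=(n+\gamma+k-1)(n+\gamma-1)_k$, apply the truncating Leibniz rule for $\Delta^{k+1}$ of an affine factor times $f_n$, and then obtain the shifted identity via the commutation relation $\Delta^s\left[(n+\gamma+2k)\Delta\right]=\left[(n+\gamma+2k+s)\Delta+sI\right]\Delta^s$. The bookkeeping of the shifts and the choice of factorization are handled exactly as in the paper's argument.
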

\begin{proof}
By Eq.~\eqref{eq:WenigerQFrom0}:
\begin{align*}
Q_n^{(k+1)} & = \Delta^{k+1}\left[(n+\gamma-1)_{k+1}Q_n^{(0)}\right],\\
& = \Delta^{k+1}\left[(n+\gamma+k-1)(n+\gamma-1)_kQ_n^{(0)}\right],\\
& = (n+\gamma+2k)\Delta^{k+1}\left[(n+\gamma-1)_kQ_n^{(0)}\right] + (k+1) \Delta^k\left[(n+\gamma-1)_kQ_n^{(0)}\right],\\
& = \left[(n+\gamma+2k)\Delta + (k+1)I\right]Q_n^{(k)}.
\end{align*}
The higher order recurrence is established by a similar observation. In particular:
\[
\Delta^s\left[(n+\gamma+2k)\Delta\right] = \left[(n+\gamma+2k+s)\Delta + sI\right]\Delta^s.
\]
\end{proof}

\begin{lemma}\label{lemma:InverseGeneralizedBinomialTransform}
Given a sequence $\{a_n\}_{n=0}^\infty$, define the sequence $\{b_n\}_{n=0}^\infty$ by the generalized binomial transform:
\begin{equation}
b_n = \sum_{k=0}^n\binom{n}{k}(c+n)_k a_k,\qquad c\notin-\N.
\end{equation}
Then the original sequence is recovered by:
\begin{equation}
a_n = \sum_{k=0}^n\binom{n}{k}(-1)^{n-k}\frac{c+2k}{(c+k)_{n+1}} b_k.
\end{equation}
\end{lemma}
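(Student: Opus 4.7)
\begin{proofsketch}
My plan is to verify the formula by direct substitution: insert the definition of $b_k$ into the right-hand side of the claimed inverse, interchange the order of summation, and show that the coefficient of each $a_j$ collapses to $\delta_{j,n}$. Starting from
\[
\sum_{k=0}^n \binom{n}{k}(-1)^{n-k}\frac{c+2k}{(c+k)_{n+1}} b_k = \sum_{j=0}^n a_j\,S_{n,j},
\]
the trinomial revision $\binom{n}{k}\binom{k}{j} = \binom{n}{j}\binom{n-j}{k-j}$, the Pochhammer ratio $(c+k)_j/(c+k)_{n+1} = 1/(c+k+j)_{n-j+1}$, and the substitution $i = k-j$ collapse the inner sum to
\[
S_{n,j} = \binom{n}{j}\sum_{i=0}^{N}\binom{N}{i}(-1)^{N-i}\frac{d+2i}{(d+i)_{N+1}},\qquad N := n-j,\quad d := c+2j,
\]
so the whole problem reduces to proving that this single-index sum equals $\delta_{N,0}$.

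The case $N = 0$ is trivial, yielding $1$. For $N \ge 1$, I would recast the sum as a terminating, well-poised $\pFq{3}{2}(1)$ and invoke Dixon's theorem. Using $\binom{N}{i}(-1)^{N-i} = (-1)^N (-N)_i/i!$, the well-poised factorization $(d+2i)/d = (d/2+1)_i/(d/2)_i$, and the elementary Pochhammer identity $1/(d+i)_{N+1} = (d)_i/[(d)_{N+1}(d+N+1)_i]$, the sum becomes
\[
\frac{(-1)^N d}{(d)_{N+1}}\;\pFq{3}{2}\!\left(\begin{array}{c} -N,\,d,\,d/2+1\\ d/2,\,d+N+1\end{array};\,1\right).
\]
Setting $a=d$ and $b=d/2+1$ in Dixon's terminating identity
\[
\pFq{3}{2}\!\left(\begin{array}{c} a,\,b,\,-N\\ 1+a-b,\,1+a+N\end{array};\,1\right) = \frac{(1+a)_N(1+a/2-b)_N}{(1+a/2)_N(1+a-b)_N},
\]
the parameter $1+a/2-b$ is exactly $0$, so the factor $(0)_N$ annihilates the sum whenever $N \ge 1$, completing the inversion.

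The main obstacle is purely algebraic bookkeeping: arranging the Pochhammer ratios and binomial coefficients so that the collapsed single sum presents precisely the well-poised balance required by Dixon's theorem. An equivalent, hypergeometric-free route splits $d+2i = (d+i) + i$, recognizes each piece as an $N$-th forward finite difference of either $1/(d+i+1)_N$ or $i/(d+i)_{N+1}$ via the elementary relation $\Delta[1/(d+i+1)_k] = -k/(d+i+1)_{k+1}$ iterated $N$ times, and observes that the two resulting contributions cancel by the identity $(N)_N = N\,(N+1)_{N-1}$.
\end{proofsketch}
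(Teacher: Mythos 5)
Your argument is correct, but it runs in the opposite direction from the paper's and leans on a different classical summation. The paper substitutes the claimed inverse into the forward transform (checking that the composition ``forward after inverse'' is the identity); after the same trinomial revision $\binom{n}{k}\binom{k}{j}=\binom{n}{j}\binom{n-j}{k-j}$, its inner sum is a terminating $\pFq{2}{1}(1)$ evaluated by Chu--Vandermonde, and the factor $(j-n+1)_{n-j}$ supplies the Kronecker delta. You instead substitute the forward transform into the claimed inverse (checking ``inverse after forward''), which isolates the cleaner single-parameter identity $\sum_{i=0}^{N}\binom{N}{i}(-1)^{N-i}(d+2i)/(d+i)_{N+1}=\delta_{N,0}$; your parameter bookkeeping is right, the resulting $\pFq{3}{2}(1)$ is indeed well poised with $b=1+a/2$, and Dixon's terminating theorem kills it through the factor $(0)_N$. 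Either direction suffices, since the forward transform is lower triangular with nonzero diagonal entries $(c+n)_n$ (for admissible $c$), so a one-sided inverse is the inverse. The trade-off: the paper's route gets away with the more elementary $\pFq{2}{1}$ summation, while yours invokes the heavier Dixon theorem but extracts a self-contained biorthogonality relation of independent interest --- and your sketched finite-difference alternative, splitting $d+2i=(d+i)+i$ and using $\Delta^N[1/(x+i)_k]=(-1)^N(k)_N/(x+i)_{k+N}$ together with $(N)_N=N\,(N+1)_{N-1}$, checks out and removes the hypergeometric machinery entirely, arguably yielding the most elementary proof of the three.
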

\begin{proof}
Substituting $a_n$ from the second equation into the first:
\begin{align*}
b_n & = \sum_{k=0}^n \binom{n}{k}(c+n)_k\sum_{j=0}^k\binom{k}{j}(-1)^{k-j}\frac{c+2j}{(c+j)_{k+1}}b_j,\\
& = \sum_{j=0}^n b_j (c+2j)\sum_{k=j}^n\binom{n}{k}\binom{k}{j}(-1)^{k-j}\frac{(c+n)_k}{(c+j)_{k+1}}.
\end{align*}
Since $\displaystyle \binom{n}{k}\binom{k}{j} = \binom{n}{j}\binom{n-j}{k-j}$,
\begin{align*}
b_n & = \sum_{j=0}^n \binom{n}{j}b_j (c+2j)\sum_{k=j}^n\binom{n-j}{k-j}(-1)^{k-j}\frac{(c+n)_k}{(c+j)_{k+1}},\\
& = \sum_{j=0}^n \binom{n}{j}b_j (c+2j)\frac{(c+n)_j}{(c+j)_{j+1}}\sum_{k=0}^{n-j}\binom{n-j}{k}(-1)^k\frac{(c+n+j)_k}{(c+2j+1)_k},\\
& = \sum_{j=0}^n \binom{n}{j}b_j \frac{(c+n)_j}{(c+j)_j} \pFq{2}{1}\left(\begin{array}{c}j-n, c+n+j\\ c+2j+1\end{array}; 1\right).
\end{align*}
By the Chu--Vandermonde identity~\cite[\S 15.4.24]{Olver-et-al-NIST-10}:
\[
b_n = \sum_{j=0}^n \binom{n}{j}b_j \frac{(c+n)_j}{(c+j)_j}\frac{(j-n+1)_{n-j}}{(c+2j+1)_{n-j}} = \sum_{j=0}^n b_j \delta_{n,j},
\]
where $\delta_{n,j}$ is the Kronecker delta~\cite[\S 1]{Olver-et-al-NIST-10}.
\end{proof}

We are now in a position to relate higher order finite differences of $Q_n^{(k)}$ to successively indexed denominators (in $k$).
\begin{lemma}\label{lemma:LevinTransforms}
\begin{equation}\label{eq:LevinForwardTransform}
Q_n^{(k+r)} = \sum_{s=0}^r \binom{r}{s} (n+\gamma+2k+r-1)_s(k+s+1)_{r-s} \Delta^s Q_n^{(k)},
\end{equation}
\begin{equation}\label{eq:LevinBackwardTransform}
\Delta^r Q_n^{(k)} = \sum_{s=0}^r\binom{r}{s}(-1)^{r-s} \frac{n+\gamma+2k+2s-1}{(n+\gamma+2k+s-1)_{r+1}}(k+s+1)_{r-s} Q_n^{(k+s)}.
\end{equation}
\end{lemma}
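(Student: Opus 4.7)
The plan is to prove \eqref{eq:LevinForwardTransform} directly from the representation of $Q_n^{(k)}$ in \eqref{eq:WenigerQFrom0}, and then obtain \eqref{eq:LevinBackwardTransform} as a direct application of Lemma \ref{lemma:InverseGeneralizedBinomialTransform}.

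First I would start from $Q_n^{(k+r)} = \Delta^{k+r}[(n+\gamma-1)_{k+r} Q_n^{(0)}]$ and use the Pochhammer factorization $(n+\gamma-1)_{k+r} = (n+\gamma-1)_k\,(n+\gamma+k-1)_r$. Applying the finite product rule to $\Delta^{k+r}$ gives a sum indexed by $j$; since $(n+\gamma+k-1)_r$ is a polynomial of degree $r$ in $n$, the terms with $j>r$ vanish and the sum truncates at $j=r$. Using the standard identity $\Delta^j(n+c)_r = \frac{r!}{(r-j)!}(n+c+j)_{r-j}$ evaluated at the shifted argument $n+k+r-j$, together with the identification $\Delta^{k+r-j}[(n+\gamma-1)_k Q_n^{(0)}] = \Delta^{r-j}Q_n^{(k)}$ (which follows from $\Delta^{k+r-j} = \Delta^{r-j}\circ\Delta^k$), and then re-indexing $s = r-j$, the coefficients collapse to $\binom{r}{s}(k+s+1)_{r-s}(n+\gamma+2k+r-1)_s$ after the simplification $\binom{k+r}{r-s}\frac{r!}{s!} = \binom{r}{s}(k+r)!/(k+s)!$. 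This is \eqref{eq:LevinForwardTransform}.

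To derive \eqref{eq:LevinBackwardTransform}, I would recast \eqref{eq:LevinForwardTransform} into the form of Lemma \ref{lemma:InverseGeneralizedBinomialTransform}. Setting $c = n+\gamma+2k-1$ (so that $(c+r)_s = (n+\gamma+2k+r-1)_s$) and defining the normalized sequences $\hat{b}_r = Q_n^{(k+r)}/(k+r)!$ and $\hat{a}_s = \Delta^sQ_n^{(k)}/(k+s)!$, the identification $\binom{r}{s}(k+s+1)_{r-s} = \binom{r}{s}(k+r)!/(k+s)!$ absorbs the extra factorials and presents the forward identity as $\hat{b}_r = \sum_{s=0}^r \binom{r}{s}(c+r)_s\,\hat{a}_s$, which is exactly the hypothesis of Lemma \ref{lemma:InverseGeneralizedBinomialTransform} (with $r,s$ playing the roles of $n,k$ in the lemma). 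The lemma then inverts this to $\hat{a}_r = \sum_{s=0}^r \binom{r}{s}(-1)^{r-s}\frac{c+2s}{(c+s)_{r+1}}\hat{b}_s$, and multiplying through by $(k+r)!$ reintroduces the factor $(k+r)!/(k+s)! = (k+s+1)_{r-s}$ and turns $c+2s$ and $(c+s)_{r+1}$ into $n+\gamma+2k+2s-1$ and $(n+\gamma+2k+s-1)_{r+1}$, producing \eqref{eq:LevinBackwardTransform}.

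The main obstacle is really just careful bookkeeping: three parameters $k$, $r$, $s$ together with several Pochhammer expressions leave many chances for off-by-one slips, and the coefficient $(n+\gamma+2k+r-1)_s$ in the forward identity has an $r$-dependent shift that only cleanly matches the generalized binomial transform after the normalizations by $(k+r)!$ and $(k+s)!$ are introduced. Once the factorization $(n+\gamma-1)_{k+r} = (n+\gamma-1)_k(n+\gamma+k-1)_r$ is used and the normalizations are chosen correctly, the forward identity is a single application of the finite product rule and the backward identity is a direct invocation of Lemma \ref{lemma:InverseGeneralizedBinomialTransform}; no further tools are required.
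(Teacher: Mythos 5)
Your proposal is correct, and the derivation of Eq.~\eqref{eq:LevinForwardTransform} takes a genuinely different route from the paper. The paper proves the forward identity by induction on $r$: it starts from the one-step operator recurrence $Q_n^{(k+1)} = [(n+\gamma+2k)\Delta + (k+1)I]Q_n^{(k)}$ of Eq.~\eqref{eq:WenigerRecurrence}, applies it with $k\to k+r$ to the inductive hypothesis, and collapses the resulting three sums with a chain of binomial identities. You instead expand $Q_n^{(k+r)} = \Delta^{k+r}[(n+\gamma-1)_{k+r}Q_n^{(0)}]$ directly: the factorization $(n+\gamma-1)_{k+r} = (n+\gamma-1)_k(n+\gamma+k-1)_r$, one application of the finite product rule (which truncates at $j=r$ since the first factor has degree $r$), the identity $\Delta^j(x+c)_r = \frac{r!}{(r-j)!}(x+c+j)_{r-j}$, and the commutation $\Delta^{k+r-j} = \Delta^{r-j}\circ\Delta^k$ give the result in one pass; I checked that $\binom{k+r}{r-s}\frac{r!}{s!} = \binom{r}{s}(k+s+1)_{r-s}$ and that the Pochhammer shift evaluates to $(n+\gamma+2k+r-1)_{r-j}$ as claimed. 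Your route buys directness and avoids both the induction and the three binomial identities; the paper's route needs only the single-step recurrence and never differentiates a Pochhammer symbol explicitly. For Eq.~\eqref{eq:LevinBackwardTransform} the two arguments coincide: the paper's remark that $(k+s+1)_{r-s} = \Gamma(k+r+1)/\Gamma(k+s+1)$ separates into a function of $r$ times a function of $s$ is exactly your normalization $\hat{b}_r = Q_n^{(k+r)}/(k+r)!$, $\hat{a}_s = \Delta^sQ_n^{(k)}/(k+s)!$ with $c = n+\gamma+2k-1$, after which Lemma~\ref{lemma:InverseGeneralizedBinomialTransform} applies (note $c \ge \gamma - 1 > -1$, so the hypothesis $c\notin-\N$ holds).
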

\begin{proof}
We prove this lemma by induction. For $r=0$, Eq.~\eqref{eq:LevinForwardTransform} is an identity, and for $r=1$, Eq.~\eqref{eq:LevinForwardTransform} is equal to Eq.~\eqref{eq:WenigerRecurrence}. By Eq.~\eqref{eq:WenigerRecurrence} with $k\rightarrow k+r$:
\begin{align*}
Q_n^{(k+r+1)} & = \left[(n+\gamma+2k+2r)\Delta+(k+r+1)I\right]Q_n^{(k+r)}\\
& = (n+\gamma+2k+2r)\left[\sum_{s=0}^r\binom{r}{s} (n+\gamma+2k+r)_s (k+s+1)_{r-s} \Delta^{s+1}Q_n^{(k)}\right.\\
& \qquad\qquad\qquad   \left. + \sum_{s=0}^r\binom{r}{s} \Delta(n+\gamma+2k+r-1)_s (k+s+1)_{r-s} \Delta^sQ_n^{(k)}\right]\\
& \qquad + \sum_{s=0}^r\binom{r}{s} (n+\gamma+2k+r-1)_s (k+s+1)_{r+1-s} \Delta^sQ_n^{(k)}
\end{align*}
In the first summation, we substitute $s\to s+1$, and thanks to the binomial coefficients all three summations may be extended to share the same summation limits:
\begin{align*}
Q_n^{(k+r+1)} & = \sum_{s=0}^{r+1}\binom{r}{s-1} (n+\gamma+2k+2r)(n+\gamma+2k+r)_{s-1} (k+s)_{r+1-s} \Delta^sQ_n^{(k)}\\
& \qquad + \binom{r}{s} (n+\gamma+2k+2r) s(n+\gamma+2k+r)_{s-1} (k+s+1)_{r-s} \Delta^sQ_n^{(k)}\\
& \qquad + \binom{r}{s} (n+\gamma+2k+r-1)_s (k+s+1)_{r+1-s} \Delta^sQ_n^{(k)}.
\end{align*}
Since $(k+s)_{r+1-s} = (k+s)(k+s+1)_{r-s}$, the first two terms may be combined:
\begin{align*}
Q_n^{(k+r+1)} & = \sum_{s=0}^{r+1}\left[ \binom{r}{s-1}(k+s) + \binom{r}{s}s\right] (n+\gamma+2k+2r)(n+\gamma+2k+r)_{s-1} (k+s+1)_{r-s} \Delta^sQ_n^{(k)}\\
& \qquad + \binom{r}{s} (n+\gamma+2k+r-1)_s (k+s+1)_{r+1-s} \Delta^sQ_n^{(k)}.
\end{align*}
Using the binomial identity:
\[
\binom{r}{s-1}(k+s) + \binom{r}{s}s = \binom{r}{s-1}(k+r+1),
\]
we find:
\begin{align*}
Q_n^{(k+r+1)} & = \sum_{s=0}^{r+1}\binom{r}{s-1} (n+\gamma+2k+2r)(n+\gamma+2k+r)_{s-1} (k+s+1)_{r+1-s} \Delta^sQ_n^{(k)}\\
& \qquad + \binom{r}{s} (n+\gamma+2k+r-1)_s (k+s+1)_{r+1-s} \Delta^sQ_n^{(k)},\\
& = \sum_{s=0}^{r+1}\left[\binom{r}{s-1}(n+\gamma+2k+2r) + \binom{r}{s}(n+\gamma+2k+r-1)\right] (n+\gamma+2k+r)_{s-1} (k+s+1)_{r+1-s} \Delta^sQ_n^{(k)}.
\end{align*}
Two more binomial identities result in the equality:
\[
\binom{r}{s-1}(n+\gamma+2k+2r) + \binom{r}{s}(n+\gamma+2k+r-1) = \binom{r+1}{s}(n+\gamma+2k+r+s-1),
\]
so that:
\[
Q_n^{(k+r+1)} = \sum_{s=0}^{r+1}\binom{r+1}{s} (n+\gamma+2k+r)_s (k+s+1)_{r+1-s} \Delta^sQ_n^{(k)},
\]
which completes the proof of Eq.~\eqref{eq:LevinForwardTransform}. Since $(k+s+1)_{r-s} = \dfrac{\Gamma(k+r+1)}{\Gamma(k+s+1)}$ is a product of a function of $r$ times a function of $s$, the inverse formula in Eq.~\eqref{eq:LevinBackwardTransform} follows from Lemma~\ref{lemma:InverseGeneralizedBinomialTransform}.
\end{proof}

\begin{proof}[Proof of Theorem~\ref{theorem:FactorialLevinRecurrence}]
In the case of the denominator, we begin with the initial conditions:
\[
Q_n^{(0)} = \dfrac{1}{(n+\gamma-1)\omega_n},\qquad Q_{n+1}^{(0)} = \dfrac{1}{(n+\gamma)\omega_{n+1}},
\]
so that:
\[
(n+\gamma)Q_{n+1}^{(0)} = (n+\gamma-1)\dfrac{\omega_n}{\omega_{n+1}}Q_n^{(0)}.
\]
As $\omega_n/\omega_{n+1}$ is rational, we express it as the ratio of two polynomials, $q_n/p_n$, where $\deg(p_n) = p$ and $\deg(q_n) = q$, so that:
\[
(n+\gamma)p_nQ_{n+1}^{(0)} = (n+\gamma-1)q_nQ_n^{(0)}.
\]
For any natural number $r$, applying $\Delta^k\left[(n+\gamma)_{k-r-1}\diamond\right]$ and using $\hat{p}_n = (n+\gamma)p_n$, we find:
\begin{align}
& \Delta^k\left[(n+\gamma)_{k-r-1} \hat{p}_n Q_{n+1}^{(0)}\right] = \Delta^k\left[ (n+\gamma-1)_{k-r} q_n Q_n^{(0)} \right],\\
& \sum_{j=0}^{\min\{k,p+1\}}\binom{k}{j} \Delta^j\hat{p}_{n+k-j} \Delta^{k-j}\left[(n+\gamma)_{k-r-1}Q_{n+1}^{(0)}\right]\nonumber\\
& \qquad = \sum_{j=0}^{\min\{k,q\}}\binom{k}{j}\Delta^jq_{n+k-j} \Delta^{k-j}\left[(n+\gamma-1)_{k-r}Q_n^{(0)}\right],\\
& \sum_{j=0}^{\min\{k,p+1\}}\binom{k}{j} \Delta^j\hat{p}_{n+k-j} \Delta^{r+1-j} Q_{n+1}^{(k-r-1)} = \sum_{j=0}^{\min\{k,q\}}\binom{k}{j}\Delta^jq_{n+k-j} \Delta^{r-j} Q_n^{(k-r)}.\label{eq:WenigerDenominatorFiniteRecurrence}
\end{align}
By Lemma~\ref{lemma:LevinTransforms}, we observe that higher order finite differences of denominators are linear combinations of successive denominators. Thus, Eq.~\eqref{eq:WenigerDenominatorFiniteRecurrence} only involves the subsequence $\{Q_n^{(k-j)}\}_{j=-1}^{r+1}$. We shall return to this observation in \S~\ref{subsubsection:LevinComputational}.

As for the numerator sequence, the initial conditions read:
\[
P_n^{(0)} = \dfrac{s_n}{(n+\gamma-1)\omega_n},\qquad P_{n+1}^{(0)} = \dfrac{s_{n+1}}{(n+\gamma)\omega_{n+1}},
\]
so that:
\[
(n+\gamma)P_{n+1}^{(0)} = (n+\gamma-1)\dfrac{\omega_n}{\omega_{n+1}}P_n^{(0)} + \dfrac{a_{n+1}}{\omega_{n+1}}.
\]
As $\omega_n/\omega_{n+1}$ and $a_{n+1}/\omega_{n+1}$ are rational, there exist polynomials $u_n$, $v_n$, and $w_n$ such that:
\[
(n+\gamma)u_nP_{n+1}^{(0)} = (n+\gamma-1)v_nP_n^{(0)} + w_n.
\]
Using the same technique as for the denominator sequence, we define $\hat{u}_n = (n+\gamma)u_n$ and:
\begin{equation}\label{eq:WenigerNumeratorFiniteRecurrence}
\sum_{j=0}^{\min\{k,u+1\}}\binom{k}{j} \Delta^j\hat{u}_{n+k-j} \Delta^{r+1-j} P_{n+1}^{(k-r-1)} = \sum_{j=0}^{\min\{k,v\}}\binom{k}{j}\Delta^jv_{n+k-j} \Delta^{r-j} P_n^{(k-r)} + \Delta^k\left[(n+\gamma)_{k-r-1}w_n\right].
\end{equation}
If we take $r = r^* := \max\{p+1, q, u+1, v, w\}$, then for every $k\ge r^*+1$:
\[
\deg((n+\gamma)_{k-r^*-1} w_n) = k-r^*-1+w \le k-1,
\]
which shows that the presence of the inhomogeneity in the recurrence relation for the numerator sequence is evanescent and both recurrence relations in Eqs.~\eqref{eq:WenigerDenominatorFiniteRecurrence} and~\eqref{eq:WenigerNumeratorFiniteRecurrence} have length independent of $n$ and $k$. 

If $\gamma$ is a natural number, for sufficiently small $n+k$ and sufficiently large $r$, the inhomogeneity, $\Delta^k[(n+\gamma)_{k-r-1}w_n]$ will be singular through the Pochhammer symbol's poles. Moreover, if $\gamma$ is in the neighbourhood of a natural number, then the issue becomes one of numerical instability as opposed to outright singularity. To avoid this problem, for $k\le r^*$, the recurrence relations employed with $r = k-1$ have no singularity present in the inhomogeneity.
\end{proof}
\begin{remark}\label{remark:reducelength}
We have found that a special case arises in practice that reduces the length of the recurrence relations by one. If $n+\gamma$ divides $q_n$, $v_n$ and $w_n$, then the recurrence relations are simplified by letting $q_n = (n+\gamma)\hat{q}_n$, $v_n = (n+\gamma)\hat{v}_n$, $w_n = (n+\gamma)\hat{w}_n$, and $r = r^\star := \max\{p,q-1,u,v-1,w-1\}$ to:
\begin{align}
& \sum_{j=0}^{\min\{k,p\}}\binom{k}{j} \Delta^jp_{n+k-j} \Delta^{r+1-j} Q_{n+1}^{(k-r-1)} = \sum_{j=0}^{\min\{k,q-1\}}\binom{k}{j}\Delta^j\hat{q}_{n+k-j} \Delta^{r-j} Q_n^{(k-r)},\label{eq:WenigerDenominatorSimplifiedFiniteRecurrence}\\
& \sum_{j=0}^{\min\{k,u\}}\binom{k}{j} \Delta^ju_{n+k-j} \Delta^{r+1-j} P_{n+1}^{(k-r-1)} = \sum_{j=0}^{\min\{k,v-1\}}\binom{k}{j}\Delta^j\hat{v}_{n+k-j} \Delta^{r-j} P_n^{(k-r)} + \Delta^k\left[(n+\gamma)_{k-r-1}\hat{w}_n\right].\label{eq:WenigerNumeratorSimplifiedFiniteRecurrence}
\end{align}
\end{remark}

\subsubsection{Computational considerations}\label{subsubsection:LevinComputational}

By using Eq.~\eqref{eq:LevinBackwardTransform} in Eqs.~\eqref{eq:WenigerDenominatorFiniteRecurrence} and~\eqref{eq:WenigerNumeratorFiniteRecurrence}, exchanging the order of summations yields:
\begin{align*}
\sum_{j=0}^{r+1} \hat{p}_{n,j}^{(k)}(r) (n+\gamma+2k-2j)Q_{n+1}^{(k-j)} & = \sum_{j=0}^r q_{n,j}^{(k)}(r) (n+\gamma+2k-2j-1)Q_n^{(k-j)},\\
\sum_{j=0}^{r+1} \hat{u}_{n,j}^{(k)}(r) (n+\gamma+2k-2j)P_{n+1}^{(k-j)} & = \sum_{j=0}^r v_{n,j}^{(k)}(r) (n+\gamma+2k-2j-1)P_n^{(k-j)} + \Delta^k\left[(n+\gamma)_{k-r-1}w_n\right].
\end{align*}
where:
\begin{align*}
\hat{p}_{n,j}^{(k)}(r) & = \sum_{s=0}^{\min\{k,j,p+1\}}\binom{k}{s}\binom{r+1-s}{r+1-j}\frac{(-1)^{j-s}(k-j+1)_{j-s}}{(n+\gamma+2k-j-r-1)_{r-s+2}}\Delta^s\hat{p}_{n+k-s},\\
q_{n,j}^{(k)}(r) & = \sum_{s=0}^{\min\{k,j,q\}}\binom{k}{s}\binom{r-s}{r-j}\frac{(-1)^{j-s}(k-j+1)_{j-s}}{(n+\gamma+2k-j-r-1)_{r-s+1}}\Delta^sq_{n+k-s},\\
\hat{u}_{n,j}^{(k)}(r) & = \sum_{s=0}^{\min\{k,j,u+1\}}\binom{k}{s}\binom{r+1-s}{r+1-j}\frac{(-1)^{j-s}(k-j+1)_{j-s}}{(n+\gamma+2k-j-r-1)_{r-s+2}}\Delta^s\hat{u}_{n+k-s},\\
v_{n,j}^{(k)}(r) & = \sum_{s=0}^{\min\{k,j,v\}}\binom{k}{s}\binom{r-s}{r-j}\frac{(-1)^{j-s}(k-j+1)_{j-s}}{(n+\gamma+2k-j-r-1)_{r-s+1}}\Delta^sv_{n+k-s}.
\end{align*}
The beauty of these formul\ae~is that we may derive recurrence relations between adjacent entries. For $\hat{p}$, we find:
\begin{align}
& j\hat{p}_{n,j}^{(k)}(r)+(n+\gamma+2k-j-r-2)k\hat{p}_{n,j-1}^{(k-1)}(r)\label{eq:WenigerPbyDeus}\\
& = (k-j+1)\left[(n+\gamma+2k-j+1)\hat{p}_{n,j-1}^{(k)}(r) - (r-j+3)k\hat{p}_{n,j-2}^{(k-1)}(r)\right],\nonumber
\end{align}
in general and for $r=k-1$:
\begin{align}
& j\hat{p}_{n,j}^{(k)}(k-1)+k\hat{p}_{n,j-1}^{(k-1)}(k-2)\label{eq:WenigerPbyDeus2}\\
& = (k-j+1)\left[(n+\gamma+2k-j+1)\hat{p}_{n,j-1}^{(k)}(k-1) + k\hat{p}_{n,j-2}^{(k-1)}(k-2)\right].\nonumber
\end{align}
For $q$, we find:
\begin{align}
& jq_{n,j}^{(k)}(r)+(n+\gamma+2k-j-r-2)kq_{n,j-1}^{(k-1)}(r)\label{eq:WenigerQbyDeus}\\
& = (k-j+1)\left[(n+\gamma+2k-j)q_{n,j-1}^{(k)}(r) - (r-j+2)kq_{n,j-2}^{(k-1)}(r)\right],\nonumber
\end{align}
in general and for $r=k-1$:
\begin{align}
& jq_{n,j}^{(k)}(k-1)+kq_{n,j-1}^{(k-1)}(k-2)\label{eq:WenigerQbyDeus2}\\
& = (k-j+1)\left[(n+\gamma+2k-j)q_{n,j-1}^{(k)}(k-1) + kq_{n,j-2}^{(k-1)}(k-2)\right].\nonumber
\end{align}
By analogy, recurrence relations with the same structures hold for $\hat{u}_{n,j}^{(k)}$ and $v_{n,j}^{(k)}$, respectively. Once $k = r^*+1$, we notice that $\hat{p}_{n,j}^{(k)}(r^*) \equiv \hat{p}_{n,j}^{(k)}(k-1)$ and $q_{n,j}^{(k)}(r^*) \equiv q_{n,j}^{(k)}(k-1)$ and we may switch from Eqs.~\eqref{eq:WenigerPbyDeus2} and~\eqref{eq:WenigerQbyDeus2} to Eqs.~\eqref{eq:WenigerPbyDeus} and~\eqref{eq:WenigerQbyDeus}, respectively, where there are no singularities in the coefficients and the order $k$ is sufficiently large that the numerator recurrence is homogeneous.

The computational pattern in the numerical implementation of Eqs.~\eqref{eq:WenigerPbyDeus}--\eqref{eq:WenigerQbyDeus2} is pictured in Figure~\ref{fig:WenigerComputationalPattern}.
\begin{figure}[htbp]
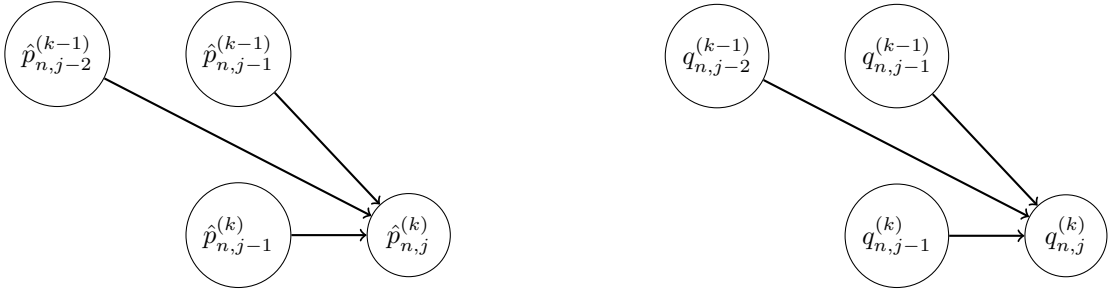

\begin{center}
\begin{tabular}{ccc}
\tikz {
  \node [circle,draw] (A)              {$\hat{p}_{n,j-2}^{(k-1)}$};
  \node [circle,draw] (B) [right=of A] {$\hat{p}_{n,j-1}^{(k-1)}$};
  \node [circle,draw] (C) [below=of B] {$\hat{p}_{n,j-1}^{(k)}$};
  \node [circle,draw] (D) [right=of C] {$\hat{p}_{n,j}^{(k)}$};
  \draw [draw = black, thick, ->]
    (A) edge (D)
    (B) edge (D)
    (C) edge (D);
}
& \hspace{2cm} &
\tikz {
  \node [circle,draw] (A)              {$q_{n,j-2}^{(k-1)}$};
  \node [circle,draw] (B) [right=of A] {$q_{n,j-1}^{(k-1)}$};
  \node [circle,draw] (C) [below=of B] {$q_{n,j-1}^{(k)}$};
  \node [circle,draw] (D) [right=of C] {$q_{n,j}^{(k)}$};
  \draw [draw = black, thick, ->]
    (A) edge (D)
    (B) edge (D)
    (C) edge (D);
}
\\
\end{tabular}
\caption{The computational pattern of Eqs.~\eqref{eq:WenigerPbyDeus}--\eqref{eq:WenigerQbyDeus2}.}
\label{fig:WenigerComputationalPattern}
\end{center}
\end{figure}

The method benefits from the following scaling of the numerators and denominators:
\[
P_n^{(k)} = (n+\gamma)_{k-1}\tilde{P}_n^{(k)},\quad{\rm and}\quad Q_n^{(k)} = (n+\gamma)_{k-1}\tilde{Q}_n^{(k)}.
\]
Furthermore, as with the Drummond transformation, the recurrence relations for the numerator and denominator sequences of polynomials can be converted into recurrence relations for an auxiliary rational sequence and the sequence of transformations. We elide the description as it would be identical to \S~\ref{subsubsection:PolynomialsToRationals}. Combined, these two observations defer numerical overflow to an order $k$ that far exceeds what is used in practice. We use the same reasonable stopping criterion described in \S~\ref{subsubsection:StoppingCriterion} in our implementation of this transformation.

\section{Numerical results}

In this section, we use the linear complexity recurrence relations for the Drummond and factorial Levin-type sequence transformations to approximate generalized hypergeometric functions $\pFq{p}{q}(\alpha,\beta;z)$. All experiments are performed on an iMac Pro (Early 2018) with a 2.3 GHz Intel Xeon W-2191B processor. Our free and open-source implementation lives in the {\sc Julia} package {\tt HypergeometricFunctions.jl}~\cite{Slevinsky-GitHub-HypergeometricFunctionsjl}.

By choosing any one of the remainder estimates in Eq.~\eqref{eq:Omega} or others in the same spirit, different families of rational approximations may be generated. We have found the differences to be quite subtle, and therefore we will present all of our numerical results with a single choice of remainder estimate --- the original choice in the Drummond transformation~\cite{Drummond-6-69-72}, and one that has also been proposed elsewhere~\cite{Smith-Ford-16-223-79,Weniger-10-189-89,Borghi-Weniger-94-149-15}:
\begin{equation}
\omega_n = \Delta s_n = a_{n+1}.
\end{equation}
Moreover, we only present numerical results for $n=0$, computing the type $(k,k)$ rational approximations. As with Pad\'e approximants, more power is usually gained with equal degree numerator and denominator than by fixing the degree of the denominator. In the factorial Levin-type transformation, we have found that the parameter $\gamma$ has only a small impact on the results, provided it is not too large. In the numerical experiments, we set $\gamma=2$.

As $p$ and $q$ in $\pFq{p}{q}$ increase, we are faced with an increasing dimension of the parameter space. While randomization in testing may increase confidence in the output, we believe that the parameter space is still too large for randomization to yield anything more meaningful than deterministic tests. We also wish to view the errors in the complex $z$-plane; hence, deterministic and reproducible values of the parameters are elected. Given the above restrictions, any numerical experiments with sequence transformations approximating generalized hypergeometric functions can only be indicative rather than exhaustive. 

Apart from terminating series (polynomials), generalized hypergeometric functions $\pFq{p}{q}$ are: entire when $p\le q$; analytic in the cut plane $\C\setminus[1,\infty)$ when $p=q+1$; or, analytic in the cut plane $\C\setminus[0,\infty)$ when $p>q+1$. As $\pFq{0}{0}(;z) \equiv e^z$ and $\pFq{1}{0}(\alpha;z) \equiv (1-z)^{-\alpha}$, we shall use $\pFq{1}{1}$, $\pFq{2}{1}$, and $\pFq{2}{0}$ as nontrivial numerical representatives of each of these respective classes. For the purposes of calculating errors, we shall take the approximations $R_0^{(k)}$ obtained in extended $128$-bit precision as true values, while the approximations under examination are calculated in $64$-bit precision; in the complex plane, we are using a tuple of equally precise floating-point numbers for the real and imaginary parts. Each of Figures~\ref{fig:1F1Drummond}--\ref{fig:2F0Weniger} contains four panels illustrating, clockwise from top left: the phase portrait of the transformations at a fixed $k$, the phase portrait of the transformations with the order $k$ determined by the reasonable stopping criterion; the relative error computed with the reasonable stopping criterion; and, the relative error at a fixed $k$. Phase portraits are an important qualitative tool in computational complex analysis because they plainly reveal roots, poles, and branch cuts. For a root, one observes a point in which the red-green-blue (RGB) colour wheel is traversed counterclockwise. For a pole, the RGB colour wheel is traversed clockwise. The contour plots of the relative error are clamped between $10^{-16}$ and $10^0$ to assist in the visualization of the plots. All plots are formed from numerical evaluations on $300\times300$ equispaced grids.

Comparing Figures~\ref{fig:1F1Drummond} and~\ref{fig:1F1Weniger}, we see that for the same order of transformation, $k=10$, the factorial Levin-type transformation includes a larger region with lower relative error than the Drummond transformation in approximating the entire function $\pFq{1}{1}$, which is in part explained by the poles of the former being further from the origin. The reasonable stopping criterion is less accurate in the right-half part of the complex plane.

With Figures~\ref{fig:2F1Drummond} and~\ref{fig:2F1Weniger}, we are reporting the approximations to the analytic function $\pFq{2}{1}$ in the cut plane $\C\setminus[1,\infty)$. In both cases, it is unfortunate that the spurious pole-root pairs do not reside on the function's branch cut. For the Drummond transformation, pole-root pairs appear to converge to the line $\Re z = \frac{1}{2}$, which has serious consequences for the transformation's ability to approximate the function to the right of this line: it appears to present a form of false converge to something completely fictitious but perhaps related to the function in hand. On the other hand, the pole-root pairs of the factorial Levin-type transformation appear to converge to the branch cut, enabling the reasonable stopping criterion to succeed in approximating the function with high relative accuracy all the way up to the cut. It then follows by regular perturbation theory that in general we cannot expect the spurious pole-root pairs of either transformation to live on the branch cut for any $\pFq{q+1}{q}$. Numerical experiments suggest that the description above is representative of the entire class, apart from select cases where the factorial Levin-type transformation's pole-root pairs do appear on the cut. We shall return to this point in the next section.

Regarding generalized hypergeometric functions analytic in the cut plane $\C\setminus[0,\infty)$, Figures~\ref{fig:2F0Drummond} and~\ref{fig:2F0Weniger} show how the transformations approximating $\pFq{2}{0}$ have their spurious root-pole pairs placed on the positive real axis. The phase portraits are sharp along the branch cut with the stopping criterion, though the accuracy of the Drummond transformation deteriorates most noticeably in the right-half complex plane.

Finally, Figures~\ref{fig:pFqDrummondOrderTime} and~\ref{fig:pFqWenigerOrderTime} illustrate, for the same three generalized hypergeometric functions, the order $k$ upon which the algorithm exits and the corresponding execution time in seconds, reported as an average over $10$ runs. The order $k$ is clamped between $10^1$ and $10^6$ and the time is clamped between $10^{-6}$ and $10^{-1}$ seconds to assist in the visualization of the plots. A close inspection shows that there is an approximately one-to-one correspondence between all plots in the top row and those directly below. Since the contours are plotted over the same relative range, we consider this to be a numerical proof that the linear complexity of the recurrence relations is realized in practice. For the Drummond transformation, we observe that the line $\Re z = \frac{1}{2}$ of accumulation of pole-root pairs of $\pFq{2}{1}$ is expensive, while the false convergence to the right of this line occurs at a much lower order. For the factorial Levin-type transformation, the order $k$ and execution time increase in the neighbourhood of the branch cuts of $\pFq{2}{1}$ and $\pFq{2}{0}$. It appears that both the order and execution times are nearly uniformly lower for the factorial Levin-type transformation than the Drummond transformation for all three examples.

\begin{figure}[htbp]
\begin{center}
\includegraphics[width=\textwidth]{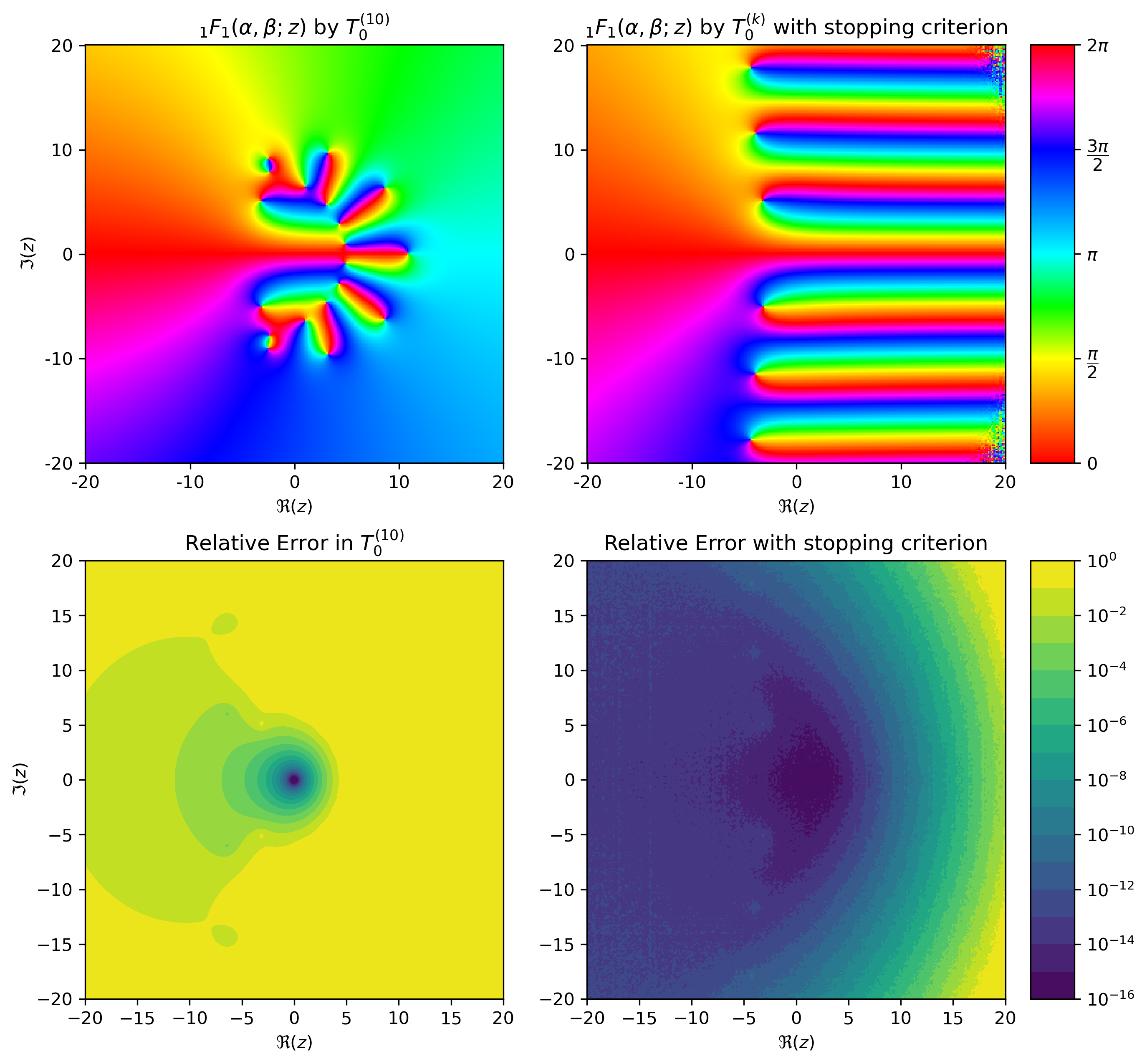}
\end{center}
\caption{The Drummond transformation for $\pFq{1}{1}(\alpha,\beta;z)$ with $\alpha=(5/4,)$ and $\beta=(3/2,)$.}
\label{fig:1F1Drummond}
\end{figure}

\begin{figure}[htbp]
\begin{center}
\includegraphics[width=\textwidth]{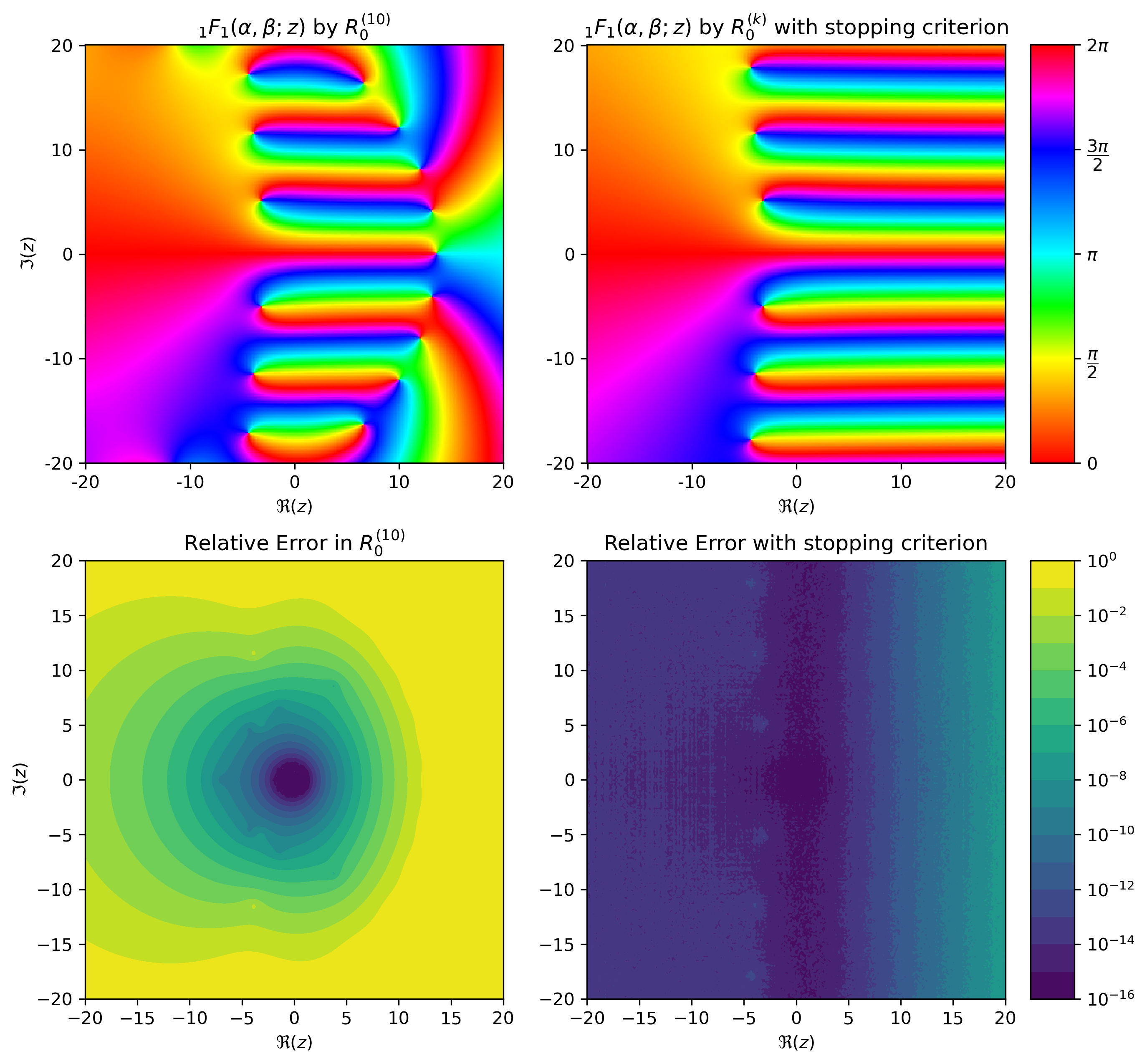}
\end{center}
\caption{The factorial Levin-type transformation for $\pFq{1}{1}(\alpha,\beta;z)$ with $\alpha=(5/4,)$ and $\beta=(3/2,)$.}
\label{fig:1F1Weniger}
\end{figure}

\begin{figure}[htbp]
\begin{center}
\includegraphics[width=\textwidth]{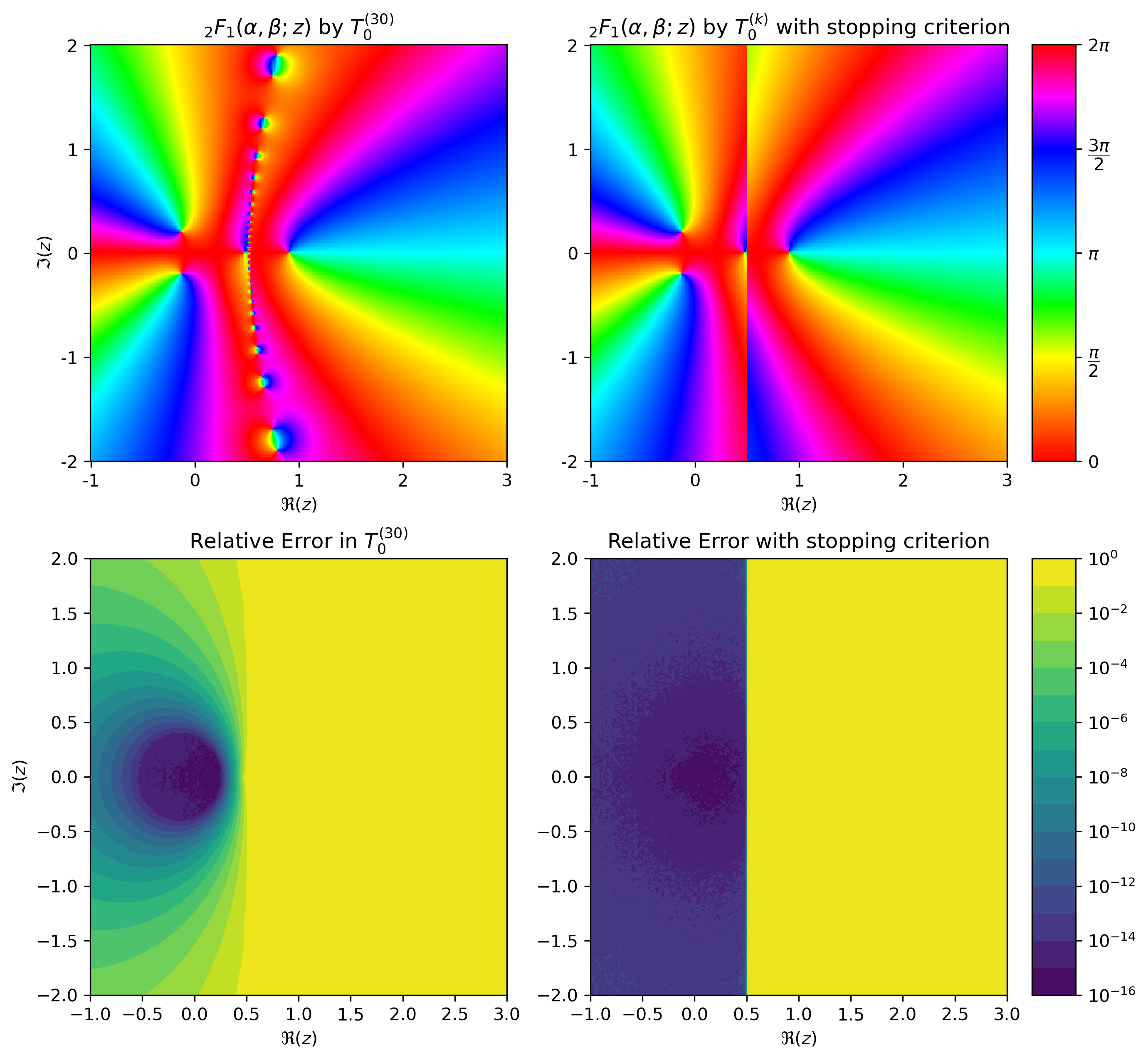}
\end{center}
\caption{The Drummond transformation for $\pFq{2}{1}(\alpha,\beta;z)$ with $\alpha=(1,-9/2)$ and $\beta=(-9/4,)$.}
\label{fig:2F1Drummond}
\end{figure}

\begin{figure}[htbp]
\begin{center}
\includegraphics[width=\textwidth]{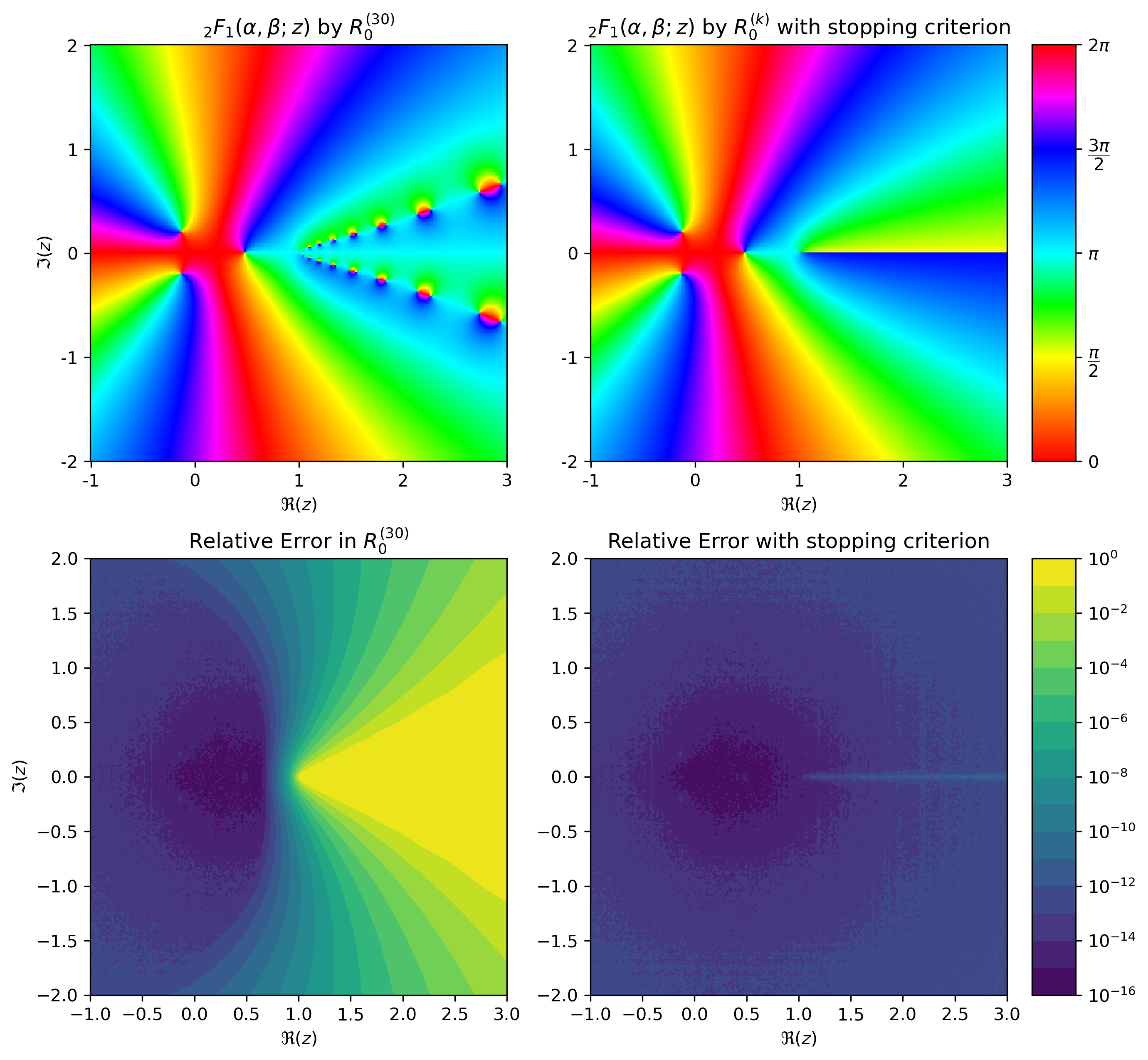}
\end{center}
\caption{The factorial Levin-type transformation for $\pFq{2}{1}(\alpha,\beta;z)$ with $\alpha=(1,-9/2)$ and $\beta=(-9/4,)$.}
\label{fig:2F1Weniger}
\end{figure}

\begin{figure}[htbp]
\begin{center}
\includegraphics[width=\textwidth]{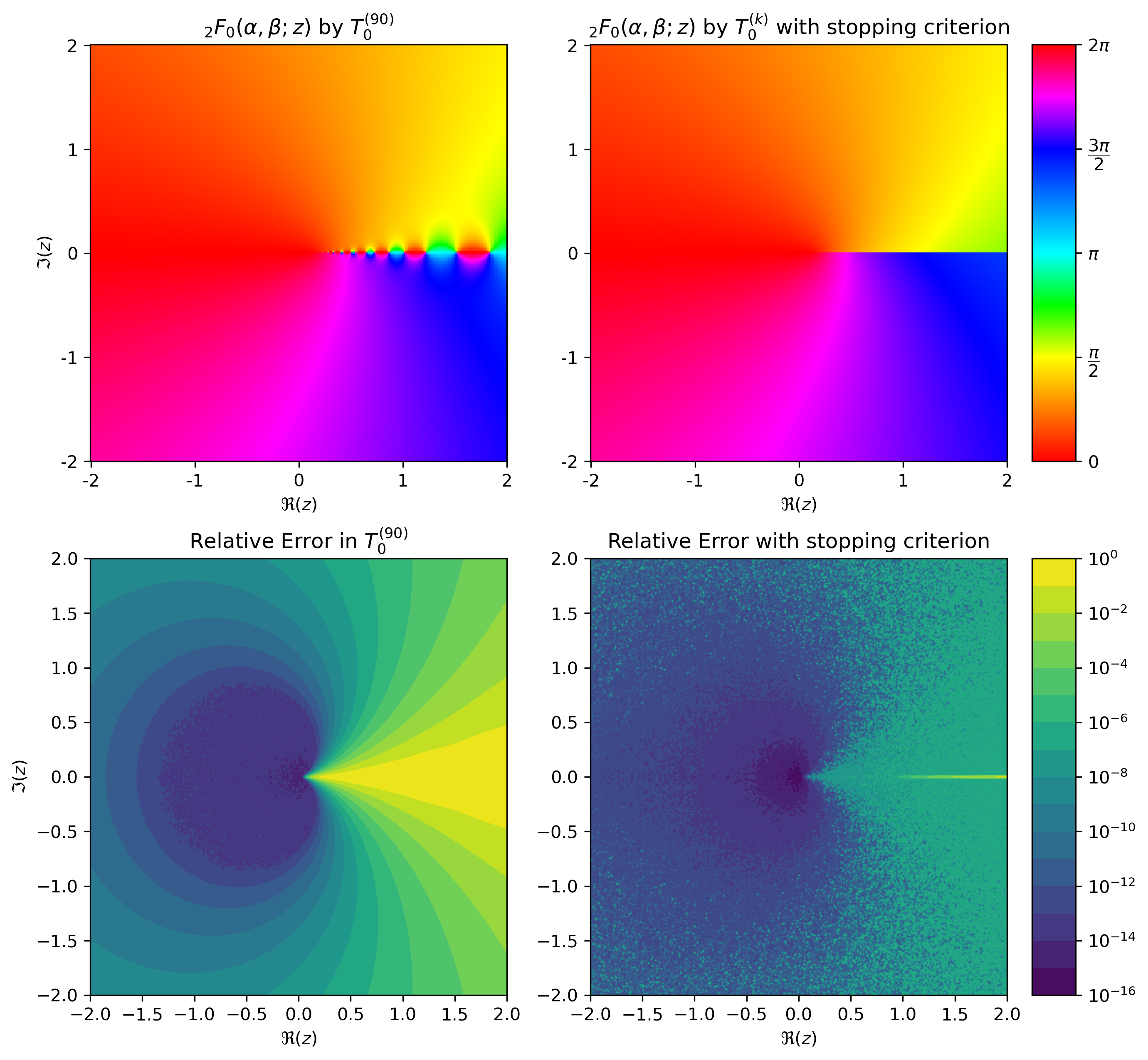}
\end{center}
\caption{The Drummond transformation for $\pFq{2}{0}(\alpha,\beta;z)$ with $\alpha=(1,3/2)$ and $\beta=()$.}
\label{fig:2F0Drummond}
\end{figure}

\begin{figure}[htbp]
\begin{center}
\includegraphics[width=\textwidth]{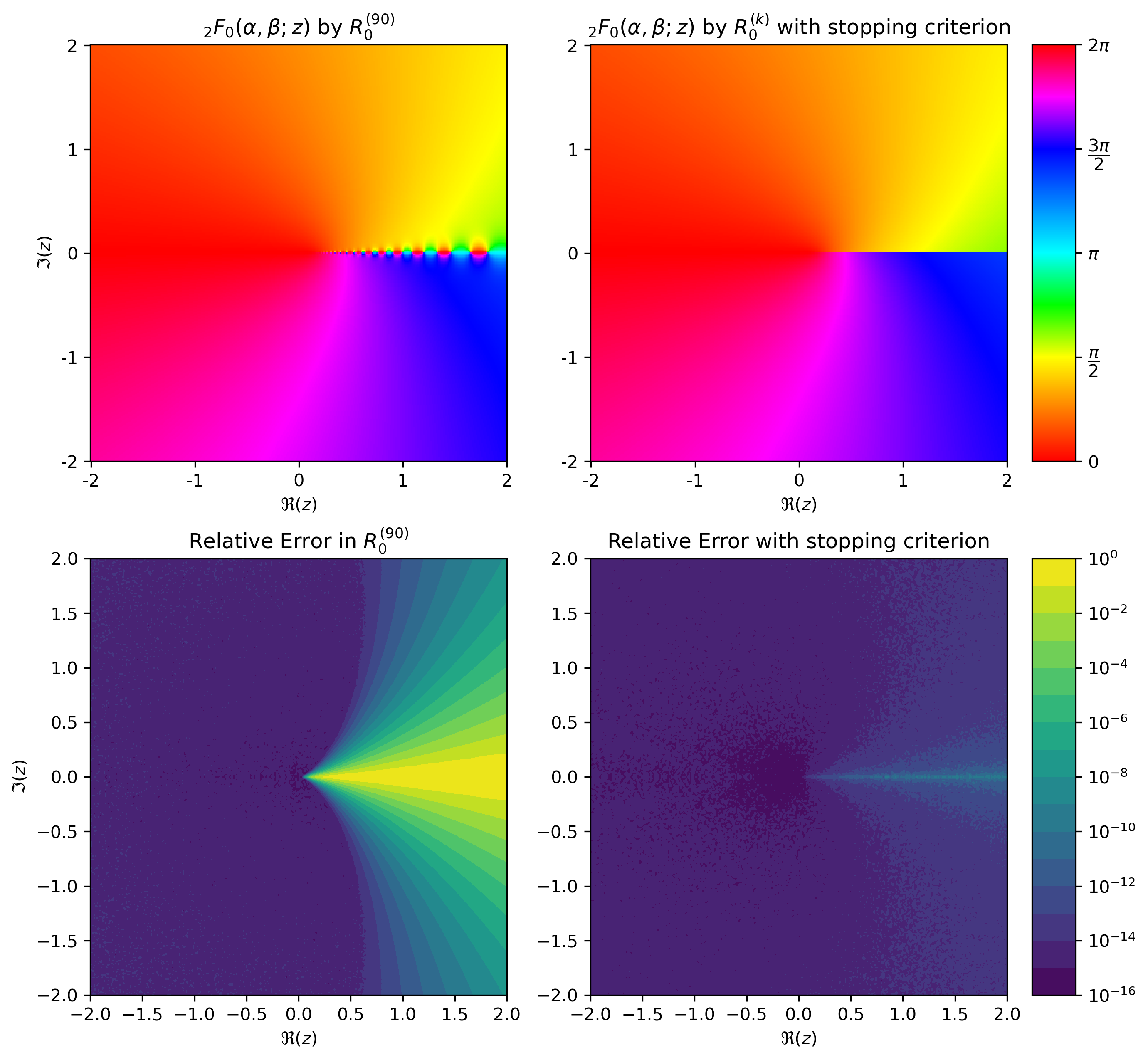}
\end{center}
\caption{The factorial Levin-type transformation for $\pFq{2}{0}(\alpha,\beta;z)$ with $\alpha=(1,3/2)$ and $\beta=()$.}
\label{fig:2F0Weniger}
\end{figure}

\begin{figure}[htbp]
\begin{center}
\includegraphics[width=\textwidth]{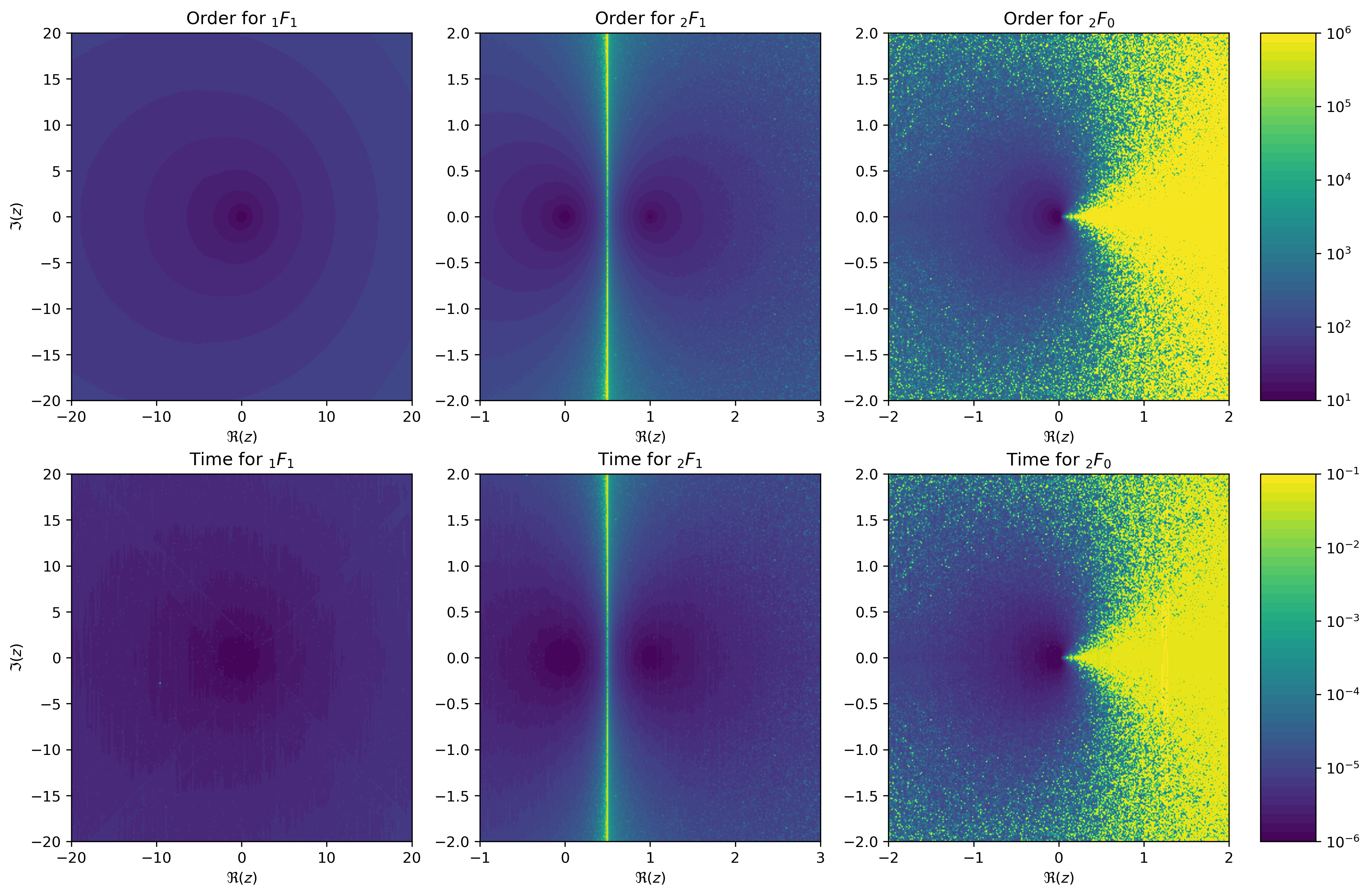}
\end{center}
\caption{The order $k$ (top row) and execution time in seconds (bottom row) of the Drummond transformation for the three generalized hypergeometric functions in Figures~\ref{fig:1F1Drummond}, \ref{fig:2F1Drummond}, and~\ref{fig:2F0Drummond}.}
\label{fig:pFqDrummondOrderTime}
\end{figure}

\begin{figure}[htbp]
\begin{center}
\includegraphics[width=\textwidth]{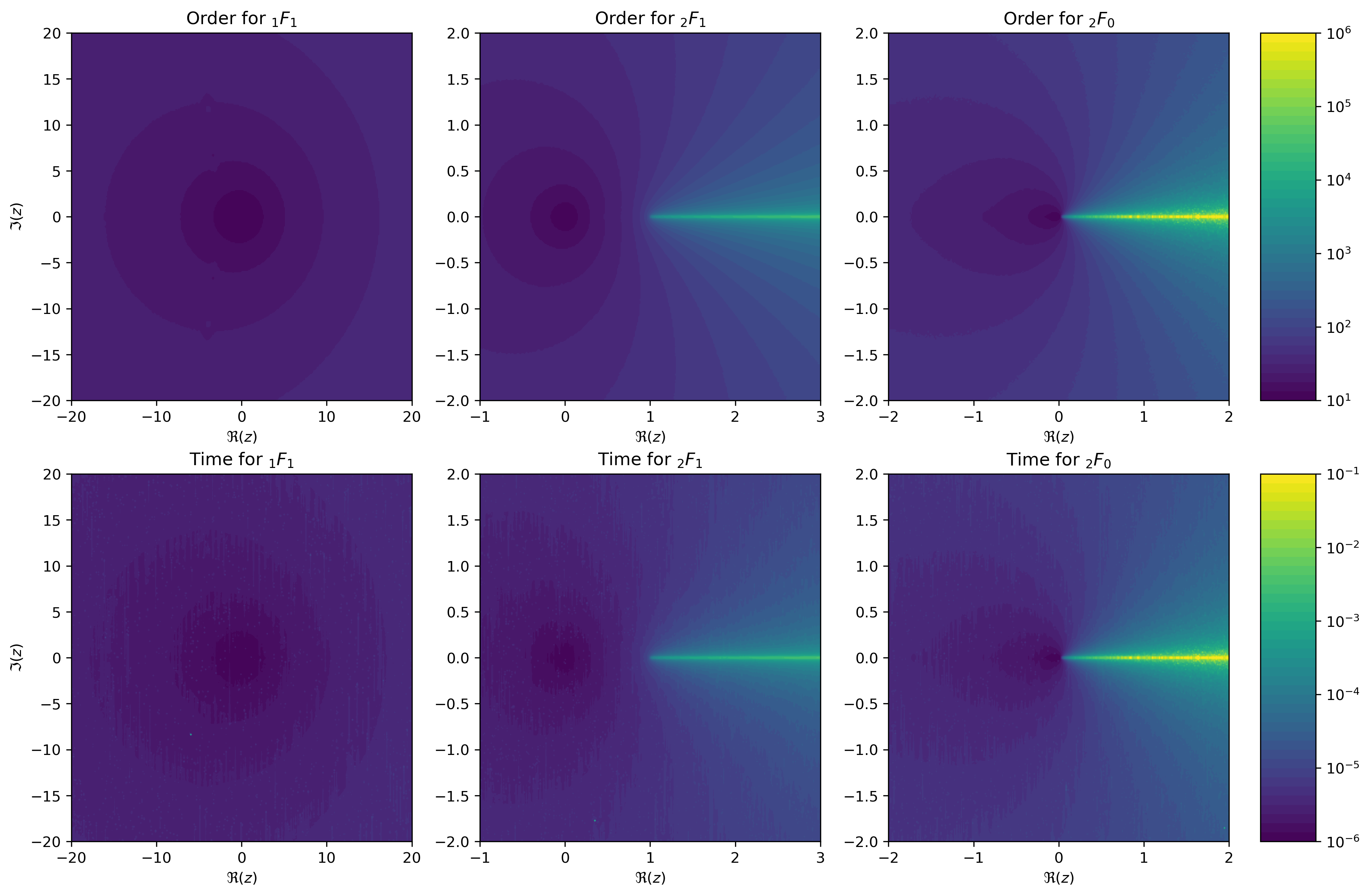}
\end{center}
\caption{The order $k$ (top row) and execution time in seconds (bottom row) of the factorial Levin-type transformation for the three generalized hypergeometric functions in Figures~\ref{fig:1F1Weniger}, \ref{fig:2F1Weniger}, and~\ref{fig:2F0Weniger}.}
\label{fig:pFqWenigerOrderTime}
\end{figure}

\section{On the placement of the poles}

It is important to be able to place the poles of the rational approximations of generalized hypergeometric functions. For a rational approximation to an entire function, we should expect all of the poles to scatter to infinity with increasing order $k$ in the type $(n+k,k)$. In the linear stability analysis of time-stepping schemes~\cite{Iserles-09}, it may also be important for the poles to live in one particular half of the complex plane. For a rational approximation to an analytic function with a branch cut, we should expect the poles of the rational approximation to be distributed along the branch cut or at the very least converge to it.

\subsection{P\'olya frequency functions}\label{subsection:PolyaFrequencyFunctions}

P\'olya frequency functions have been used by Borghi and Weniger~\cite{Borghi-Weniger-94-149-15} in the case of $\pFq{2}{0}(1,1;z)$ to locate the poles of both the Drummond and factorial Levin-type transformation (with $\gamma=1$) on the branch cut $(0,\infty)$. We begin our exploration with a generalization of their result.

\begin{lemma}[Richards~\cite{Richards-137/138-467-90} and Driver, Jordaan, and Mart\'inez-Finkelstein~\cite{Driver-Jordaan-Martinez-Finkelshtein-332-1045-07}]\label{eq:PolyaFrequencyFunctions}
Let $p\ge q+1$, $\alpha_1,\ldots,\alpha_p>0$ and $k_1,\ldots,k_{q+1}\in\N_0$. Then:
\[
\pFq{q+1}{p}\left(\begin{array}{c}\alpha_1+k_1,\ldots,\alpha_{q+1}+k_{q+1}\\\alpha_1,\ldots,\alpha_p\end{array}; z\right),
\]
is a P\'olya frequency function, and the associated terminating series:
\[
\pFq{q+2}{p}\left(\begin{array}{c}-k,\alpha_1+k_1,\ldots,\alpha_{q+1}+k_{q+1}\\\alpha_1,\ldots,\alpha_p\end{array}; z\right),
\]
has positive real roots.
\end{lemma}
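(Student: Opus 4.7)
The plan is to separate the two assertions: (i) that $f(z) := {}_{q+1}F_p(\alpha_1+k_1,\dots,\alpha_{q+1}+k_{q+1};\alpha_1,\dots,\alpha_p;z)$ is a Pólya frequency function (PFF) --- equivalently, the Laplace transform of some non-negative density supported on $[0,\infty)$; and (ii) that appending a $-k$ numerator parameter produces a polynomial whose roots are real and positive. Part (ii) will follow from (i) by a routine Pólya--Schur/Laguerre-style argument, so the substantive work is in (i).

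For (i), the guiding observation is the identity $(\alpha_j+k_j)_n/(\alpha_j)_n = (\alpha_j+n)_{k_j}/(\alpha_j)_{k_j}$, which rewrites
\[
f(z) = \frac{1}{\prod_{j=1}^{q+1}(\alpha_j)_{k_j}} \sum_{n=0}^\infty \frac{\prod_{j=1}^{q+1}(\alpha_j+n)_{k_j}}{\prod_{j=q+2}^p (\alpha_j)_n}\,\frac{z^n}{n!},
\]
exposing $f$ as a polynomial-in-$n$ modulation (of total degree $\sum_j k_j$) of the reduced hypergeometric ${}_0F_{p-q-1}(;\alpha_{q+2},\dots,\alpha_p;z)$. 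The base case is the elementary PFF $x^{\alpha-1}e^{-x}/\Gamma(\alpha)$ (a gamma density, classically a PFF for $\alpha>0$). For $p>q+1$, I would inductively inject each additional denominator parameter $\alpha_r$ by Laplace-convolving with one of these gamma kernels; Schoenberg's theorem that Laplace convolution of PFFs is a PFF keeps us inside the class. The polynomial multiplier $\prod_j(\alpha_j+n)_{k_j}$ then translates (via $n\leftrightarrow x\frac{d}{dx}$) into applying the differential operator $\prod_j(\alpha_j + x\frac{d}{dx})_{k_j}$ to the resulting density, and a direct computation shows that with the positivity assumption $\alpha_j>0$ and $k_j\in\N_0$ this operator preserves non-negativity of the kernel --- the image being essentially a finite non-negative linear combination of shifted gamma densities.

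For (ii), observe that
\[
{}_{q+2}F_p(-k,\alpha_1+k_1,\dots;\alpha_1,\dots;z) = \sum_{n=0}^k (-1)^n\binom{k}{n}\frac{\prod_{j=1}^{q+1}(\alpha_j+k_j)_n}{\prod_{j=1}^p(\alpha_j)_n}z^n,
\]
which is exactly the $k$-th binomial truncation of the Taylor series of $f(-z)$. By the classical Pólya--Schur theory of multiplier sequences, or equivalently Laguerre's theorem, any such binomial truncation of a function whose $f(-z)$ sits in the Laguerre--Pólya class with purely negative real zeros --- which is the PFF property already established in part (i) --- has only positive real zeros. This is exactly the second claim.

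The hard step is part (i) for general $p>q+1$: one must maintain the positivity of the Laplace density through (a) the initial gamma-kernel base, (b) the iterated Laplace convolutions adding each extra $\alpha_r$, and (c) the polynomial multiplier built from $\prod_j(\alpha_j+n)_{k_j}$. Each ingredient is classical, but the coordination is what makes the proofs of Richards and of Driver--Jordaan--Mart\'inez-Finkelshtein non-trivial; I would expect the cleanest write-up to proceed via the Mellin--Barnes representation of the underlying Meijer $G$-function, where positivity of the integrand becomes manifest for positive parameters.
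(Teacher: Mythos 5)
The paper does not actually prove this lemma; it is quoted, with attribution, from Richards and from Driver--Jordaan--Mart\'inez-Finkelshtein, so your attempt can only be compared against the arguments in those references. Your sketch does recover two of the load-bearing ideas there: the Pochhammer rearrangement $(\alpha_j+k_j)_n/(\alpha_j)_n=(\alpha_j+n)_{k_j}/(\alpha_j)_{k_j}$, which isolates a polynomial-in-$n$ multiplier with only real zeros $\le-\min_j\alpha_j<0$ acting on a parameter-reduced base series; and the passage from part (i) to part (ii) via Jensen polynomials: writing $f(z)=\sum_n\gamma_nz^n/n!$, the terminating $\pFq{q+2}{p}$ is exactly $g_k(-z)$ where $g_k(z)=\sum_{n=0}^k\binom{k}{n}\gamma_nz^n$, and if $f$ lies in the Laguerre--P\'olya class with nonnegative Maclaurin coefficients then every $g_k$ has only real (hence negative) zeros, so $g_k(-z)$ has only positive real zeros. (Your statement of this step carries a sign slip --- it is $f$ itself, not $f(-z)$, that must have only negative real zeros --- but the mechanism is the right one.)

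The genuine gap is in part (i), and it is foundational. A P\'olya frequency function is \emph{not} equivalent to ``the Laplace transform of a non-negative density supported on $[0,\infty)$''; that is complete monotonicity (Bernstein--Widder), an entirely different cone. The characterization actually needed here is Schoenberg's, or in the form used downstream the Aissen--Schoenberg--Whitney--Edrei one: the Maclaurin coefficients form a totally positive sequence, equivalently $f(z)=Ce^{\gamma z}\prod_j(1+z/x_j)$ with $\gamma\ge0$ and $x_j>0$. Because you start from the wrong equivalence, your two positivity-preservation steps do not close the argument. First, showing that $\prod_j\bigl(\alpha_j+x\tfrac{d}{dx}\bigr)_{k_j}$ preserves \emph{non-negativity} of a kernel is far weaker than preserving total positivity; the correct tool is Laguerre's theorem that $\{h(n)\}_{n\ge0}$ is a multiplier sequence whenever $h$ is a polynomial with only real negative zeros, applied to the coefficient sequence rather than to a density. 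Second, injecting each extra denominator parameter $\alpha_r$ is a termwise (Hadamard-type) division of the $n$th coefficient by $(\alpha_r)_n$, realized by an Euler/fractional integral, not a Laplace convolution of densities, and total positivity is not preserved by termwise products in general. Finally, the base case for $p>q+1$ is not a gamma density but the entire function $\pFq{0}{p-q-1}$, and the assertion that it has only negative real zeros (a hyper-Bessel-type theorem) is precisely the nontrivial input your sketch replaces with a plausibility claim; that is the content that must be imported from the cited references for the proof to be complete.
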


For the particular choice of remainder estimate $\omega_n = \Delta s_n = a_{n+1}$, the denominators in both transformations of generalized hypergeometric functions are also related to terminating generalized hypergeometric functions.
\begin{lemma}\label{lemma:DenominatorsAsTerminatingpFq}
The denominators in the Drummond transformation of the generalized hypergeometric series are:
\begin{equation}
\Delta^k\left(\dfrac{1}{\Delta s_n}\right) = \dfrac{(-1)^k(\beta_1)_{n+1}\cdots(\beta_q)_{n+1}(1)_{n+1}}{z^{n+1}(\alpha_1)_{n+1}\cdots(\alpha_p)_{n+1}}\pFq{q+2}{p}\left(\begin{array}{c}-k, n+2, \beta_1+n+1,\ldots,\beta_q+n+1\\\alpha_1+n+1,\ldots,\alpha_p+n+1\end{array}; \frac{1}{z}\right).
\end{equation}
The denominators in the factorial Levin-type transformation of the generalized hypergeometric series are:
\begin{align}
\Delta^k\left(\dfrac{(n+\gamma)_{k-1}}{\Delta s_n}\right) & = \dfrac{(-1)^k(\beta_1)_{n+1}\cdots(\beta_q)_{n+1}(1)_{n+1}(n+\gamma)_{k-1}}{z^{n+1}(\alpha_1)_{n+1}\cdots(\alpha_p)_{n+1}}\nonumber\\
& \quad \pFq{q+3}{p+1}\left(\begin{array}{c}-k, k+n+\gamma-1, n+2, \beta_1+n+1,\ldots,\beta_q+n+1\\ n+\gamma,\alpha_1+n+1,\ldots,\alpha_p+n+1\end{array}; \frac{1}{z}\right),
\end{align}
and if $\gamma=2$, then:
\begin{equation}
\Delta^k\left(\dfrac{(n+2)_{k-1}}{\Delta s_n}\right) = \dfrac{(-1)^k(\beta_1)_{n+1}\cdots(\beta_q)_{n+1}(1)_{n+k}}{z^{n+1}(\alpha_1)_{n+1}\cdots(\alpha_p)_{n+1}}\pFq{q+2}{p}\left(\begin{array}{c}-k, k+n+1, \beta_1+n+1,\ldots,\beta_q+n+1\\ \alpha_1+n+1,\ldots,\alpha_p+n+1\end{array}; \frac{1}{z}\right).
\end{equation}
\end{lemma}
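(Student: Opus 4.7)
Proof proposal.

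The plan is entirely computational. I would substitute the explicit form of the generalized hypergeometric terms, apply $\Delta^k$ via its binomial expansion, factor out the $j=0$ dependence from each Pochhammer, and recognize what remains as a terminating ${}_rF_s$.

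First I would rewrite the remainder estimate. Since $\omega_n=\Delta s_n=a_{n+1}$ and
\[
a_{n+1}=\frac{(\alpha_1)_{n+1}\cdots(\alpha_p)_{n+1}}{(\beta_1)_{n+1}\cdots(\beta_q)_{n+1}}\,\frac{z^{n+1}}{(n+1)!},
\]
its reciprocal is a product of Pochhammers in $n+1$ times $(1)_{n+1}/z^{n+1}$. Applying the explicit formula $\Delta^k f_n=\sum_{j=0}^k\binom{k}{j}(-1)^{k-j}f_{n+j}$, the key step is to factor each $j$-dependent Pochhammer using the shift identity $(a)_{n+1+j}=(a)_{n+1}(a+n+1)_j$ (and $(n+j+1)!=(n+1)!(n+2)_j$, $z^{n+j+1}=z^{n+1}z^j$). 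Then the prefactor pulls out of the sum, leaving a $j$-sum of ratios of Pochhammers. Converting the binomial via $\binom{k}{j}(-1)^{k-j}=(-1)^k(-k)_j/j!$ turns the sum into the defining series of a terminating hypergeometric in the variable $1/z$ with upper parameters $-k,\,n+2,\,\beta_1+n+1,\ldots,\beta_q+n+1$ and lower parameters $\alpha_1+n+1,\ldots,\alpha_p+n+1$. That establishes the first identity, with $(n+1)!=(1)_{n+1}$.

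For the factorial Levin-type denominator, the only extra ingredient is the $j$-dependence of $(n+j+\gamma)_{k-1}$. The useful form is
\[
(n+j+\gamma)_{k-1}=\frac{(n+\gamma)_{k-1}\,(n+\gamma+k-1)_j}{(n+\gamma)_j},
\]
which follows immediately from telescoping $\Gamma$-quotients. Pulling $(n+\gamma)_{k-1}$ out of the sum and folding $(n+\gamma+k-1)_j$ into the upper parameters and $(n+\gamma)_j$ into the lower parameters contributes precisely one additional upper and one additional lower parameter. The result is the claimed ${}_{q+3}F_{p+1}$.

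Finally, the $\gamma=2$ case follows by inspection of the previous formula: the upper parameter $n+2$ coincides with the (newly introduced) lower parameter $n+\gamma=n+2$, so this pair cancels and the order drops back to ${}_{q+2}F_p$; simultaneously the prefactor $(n+\gamma)_{k-1}(1)_{n+1}=(n+2)_{k-1}(n+1)!$ collapses to $(n+k)!=(1)_{n+k}$, matching the stated expression. The main obstacle, if any, is bookkeeping: getting the Pochhammer shift identity applied in the right direction so that the ``new'' parameters $k+n+\gamma-1$ and $n+\gamma$ land on the correct sides of the hypergeometric, and confirming the cancellation at $\gamma=2$ without losing the $(1)_{n+k}$ factor in the prefactor.
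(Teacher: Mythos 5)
Your proposal is correct and follows essentially the same route as the paper's proof, which likewise reduces everything to the identity $\binom{k}{j}(-1)^j=(-k)_j/j!$ and the Pochhammer split $(\alpha)_{n+j+1}=(\alpha)_{n+1}(\alpha+n+1)_j$; your extra identity $(n+j+\gamma)_{k-1}=(n+\gamma)_{k-1}(n+\gamma+k-1)_j/(n+\gamma)_j$ and the $\gamma=2$ cancellation are exactly the details the paper leaves implicit.
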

\begin{proof}
These formul\ae~follow from this representation of the binomial coefficient:
\[
\binom{k}{j}(-1)^j = \dfrac{(-k)_j}{j!},
\]
and the product of two Pochhammer symbols, $(\alpha)_{n+j+1} = (\alpha)_{n+1}(\alpha+n+1)_j$.
\end{proof}

The next two results are a direct consequence of Lemmata~\ref{eq:PolyaFrequencyFunctions} and~\ref{lemma:DenominatorsAsTerminatingpFq}.
\begin{theorem}\label{theorem:PFFDrummond}
Let $p > q+1$, let $\alpha_1+n+1,\ldots,\alpha_{p-1}+n+1>0$, let $\alpha_p\in\{-n,1-n,\ldots,1\}$, and let $\beta_i-\alpha_i\in\N_0$ for $i=1,\ldots,q$. Then the Drummond transformations with $\omega_n = \Delta s_n$ have positive real poles.
\end{theorem}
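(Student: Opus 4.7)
The plan is to combine Lemma~\ref{lemma:DenominatorsAsTerminatingpFq} and Lemma~\ref{eq:PolyaFrequencyFunctions}. The first gives an explicit formula for the Drummond denominator $D_n^{(k)}$ as a nonzero prefactor times a terminating $\pFq{q+2}{p}$ in the variable $w = 1/z$, and the second asserts that, under positivity and non-negative-integer-shift hypotheses on the parameters, such a terminating series has only positive real roots (in $w$). The poles of $T_n^{(k)}$ are precisely the zeros of $D_n^{(k)}$, so if the polynomial is identified as a Pólya frequency terminating series in $w$ with positive real roots $w_j>0$, its zeros in $z$ are $z_j = 1/w_j>0$, yielding the claim. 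The nonzero prefactor $(-1)^k(\beta_1)_{n+1}\cdots(\beta_q)_{n+1}(1)_{n+1}/[z^{n+1}(\alpha_1)_{n+1}\cdots(\alpha_p)_{n+1}]$ contributes no additional finite zeros, so the matching to the Richards--Driver lemma is lossless.

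The heart of the argument is the parameter matching. In the Drummond denominator, the terminating hypergeometric has $q+1$ ``non-trivial'' upper parameters $n+2,\beta_1+n+1,\ldots,\beta_q+n+1$ and $p$ lower parameters $\alpha_1+n+1,\ldots,\alpha_p+n+1$. Lemma~\ref{eq:PolyaFrequencyFunctions} requires $p\ge q+1$ (satisfied since $p>q+1$), positivity of the lower parameters, and that each upper parameter is obtained from some lower parameter by adding a non-negative integer. I would verify these one by one:
\begin{itemize}
\item $\alpha_i+n+1>0$ for $i=1,\ldots,p-1$ is a direct hypothesis, and $\alpha_p\in\{-n,1-n,\ldots,1\}$ yields $\alpha_p+n+1\in\{1,2,\ldots,n+2\}$, also positive.
\item For $i=1,\ldots,q$, $\beta_i+n+1=(\alpha_i+n+1)+(\beta_i-\alpha_i)$ with $\beta_i-\alpha_i\in\N_0$ by hypothesis.
\item The remaining non-trivial upper parameter $n+2$ is paired with the lower parameter $\alpha_p+n+1$, giving a shift $n+2-(\alpha_p+n+1)=1-\alpha_p\in\{0,1,\ldots,n+1\}\subset\N_0$.
\end{itemize}
This accounts for exactly $q+1$ shifted lower parameters, matching the hypothesis of Lemma~\ref{eq:PolyaFrequencyFunctions}; the remaining $p-q-1$ lower parameters $\alpha_{q+1}+n+1,\ldots,\alpha_{p-1}+n+1$ are unshifted positive reals, which is permitted.

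With the hypotheses of Lemma~\ref{eq:PolyaFrequencyFunctions} verified, I would invoke it to conclude that the terminating $\pFq{q+2}{p}$ has only positive real zeros in $w=1/z$. Passing back to $z$, these are positive real poles of $T_n^{(k)}$, and the conclusion follows.

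The main obstacle is purely bookkeeping: one must check that the number and location of the shifted parameters align correctly with Richards' statement, in particular that the hypothesis $\alpha_p\in\{-n,\ldots,1\}$ is precisely what is needed to realize $n+2$ as a positive-integer shift of a strictly positive lower parameter (the parameter $\alpha_p+n+1$ would be non-positive if $\alpha_p\le -n-1$, breaking the application). Beyond this matching, no computation is required.
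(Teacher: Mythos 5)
Your proposal is correct and follows exactly the paper's route: the paper states Theorem~\ref{theorem:PFFDrummond} as ``a direct consequence'' of Lemmata~\ref{eq:PolyaFrequencyFunctions} and~\ref{lemma:DenominatorsAsTerminatingpFq}, and your parameter bookkeeping (pairing $\beta_i+n+1$ with $\alpha_i+n+1$ via $\beta_i-\alpha_i\in\N_0$, and $n+2$ with $\alpha_p+n+1$ via the shift $1-\alpha_p\in\N_0$) is precisely the verification the paper leaves implicit. No gaps.
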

As examples, the Drummond transformations of $\pFq{2}{0}(\alpha,1;z)$ have positive real poles provided $\alpha+n+1>0$, generalizing the result of Borghi and Weniger~\cite[Lemma 6.1]{Borghi-Weniger-94-149-15}. And, the Drummond transformations of $\pFq{3}{0}(\alpha, \beta,1;z)$ have positive real poles provided $\alpha+n+1>0$ and $\beta+n+1>0$, proving with conditions the observation of Li and Slevinsky~\cite[\S 3.3]{Li-Slevinsky-397-108870-19}. Note that Theorem~\ref{theorem:PFFDrummond} introduces the restriction $p>q+1$, excluding the Gauss-like case $p=q+1$ in addition to the entire functions.

\begin{theorem}\label{theorem:PFFWeniger}
Let $p\ge q+1$, $\beta_i-\alpha_i\in\N_0$ for $i=1,\ldots,q$, and:
\begin{enumerate}
\item let $n+\gamma>0$, let $\alpha_1+n+1,\ldots,\alpha_{p-1}+n+1>0$, and let $\alpha_p\in\{-n,1-n,\ldots,1\}$; or,
\item if $\gamma=2$, let $\alpha_1+n+1,\ldots,\alpha_{p-1}+n+1>0$, and let $\alpha_p\in\{-n,1-n,\ldots,k\}$.
\end{enumerate}
Then the factorial Levin-type transformations with $\omega_n = \Delta s_n$ have positive real poles.
\end{theorem}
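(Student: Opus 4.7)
My strategy is to combine Lemma~\ref{lemma:DenominatorsAsTerminatingpFq}, which rewrites $Q_n^{(k)}$ as a $z$-independent prefactor times a terminating generalized hypergeometric polynomial in $w = 1/z$, with Lemma~\ref{eq:PolyaFrequencyFunctions}, which places the roots of such a terminating series on the positive real axis whenever its parameters exhibit Polya-frequency structure. Since $w\mapsto 1/w$ is a bijection of $(0,\infty)$ onto itself, positive real roots in $w$ translate to positive real poles of $R_n^{(k)}$ in $z$.

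All the work then consists in checking that, in each of the two stated regimes, the terminating ${}_{q+3}F_{p+1}$ (Case~1) or ${}_{q+2}F_p$ (Case~2) supplied by Lemma~\ref{lemma:DenominatorsAsTerminatingpFq} fits the template of Lemma~\ref{eq:PolyaFrequencyFunctions}. Reading that template with its indices renamed $q_L,p_L$, I need first that $p_L\ge q_L+1$, which in Case~1 ($q_L=q+1$, $p_L=p+1$) and Case~2 ($q_L=q$, $p_L=p$) both reduce precisely to the standing hypothesis $p\ge q+1$; second, that every lower parameter is strictly positive; and third, that each upper parameter except $-k$ equals one of the lower parameters plus a nonnegative integer.

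For the matching in Case~1, I would pair $n+\gamma$ with $k+n+\gamma-1$ (offset $k-1\ge 0$), $\alpha_p+n+1$ with $n+2$ (offset $1-\alpha_p$, which lies in $\mathbb{N}_0$ precisely when $\alpha_p\in\{-n,1-n,\ldots,1\}$), and $\alpha_i+n+1$ with $\beta_i+n+1$ for $i=1,\ldots,q$ (offset $\beta_i-\alpha_i\in\mathbb{N}_0$ by hypothesis); the remaining $p-q-1$ unmatched lower parameters $\alpha_{q+1}+n+1,\ldots,\alpha_{p-1}+n+1$ serve as the "spectator" $\alpha_i$ of Lemma~\ref{eq:PolyaFrequencyFunctions} and are positive by hypothesis, as are $n+\gamma$ and $\alpha_p+n+1$. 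Case~2 is analogous: the $\gamma=2$ representation collapses the factor $n+\gamma$, so the only genuinely new pairing is $\alpha_p+n+1$ with $k+n+1$ (offset $k-\alpha_p$), which together with positivity widens the admissible window to $\alpha_p\in\{-n,1-n,\ldots,k\}$; the remaining pairings are identical to Case~1.

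The main obstacle is bookkeeping rather than anything conceptual. Two small wrinkles do require attention: one must check that the $z$-independent prefactor in Lemma~\ref{lemma:DenominatorsAsTerminatingpFq} does not vanish under the stated hypotheses, so that the poles of $R_n^{(k)}$ really are the zeros of the terminating hypergeometric polynomial and no spurious cancellation obscures them; and one must separate the degenerate low-$k$ cases (for instance $k=0$ in Case~1, where the offset $k-1$ would be negative) in which the denominator is essentially constant and the conclusion is vacuous. Beyond this verification, the result follows immediately by applying Lemma~\ref{eq:PolyaFrequencyFunctions} to the matched parameter configuration.
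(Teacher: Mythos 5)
Your proposal is correct and is exactly the argument the paper intends: the paper offers no written proof, stating only that the theorem is ``a direct consequence of Lemmata~\ref{eq:PolyaFrequencyFunctions} and~\ref{lemma:DenominatorsAsTerminatingpFq},'' and your parameter matching (pairing $k+n+\gamma-1$ with $n+\gamma$, $n+2$ or $k+n+1$ with $\alpha_p+n+1$, and $\beta_i+n+1$ with $\alpha_i+n+1$, with the surplus lower parameters as positive spectators) is precisely the bookkeeping that justifies that claim. Your added care about the nonvanishing prefactor and the vacuous $k=0$ case goes slightly beyond what the paper records but introduces no divergence in method.
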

As an example, the factorial Levin-type transformations of $\pFq{2}{0}(\alpha,1;z)$ have positive real poles provided $\alpha+n+1>0$, generalizing the result of Borghi and Weniger~\cite[Theorem 5.1]{Borghi-Weniger-94-149-15}.

\subsection{Generalized eigenvalue problems}\label{subsection:GeneralizedEigenvalueProblems}

While P\'olya frequency functions are a powerful theoretical tool, this theory does not {\em characterize} the placement of the poles; in particular, due to the restriction $p\ge q+1$, the theory of P\'olya frequency functions does not apply to the rational approximants to entire hypergeometric functions. Moreover, if $p = q+1$, we expect the reciprocal poles to live more precisely in $(0,1]\subset(0,\infty)$. Therefore, we shall also consider how the recurrence relations from \S~\ref{section:linearcomplexity} translate to banded generalized eigenvalue problems for the reciprocal poles, $\zeta = z^{-1}$. Given that all three equations in Lemma~\ref{lemma:DenominatorsAsTerminatingpFq} include a factor of $\zeta^{n+1}$ in the denominators that cancels with an equal term in the numerators, all discussion in this section only concerns the nontrivial reciprocal poles.

Viewing reciprocal poles as generalized eigenvalues is useful for determining the partial fraction decomposition of a rational function. For example, with simple poles in hand, the residues may be computed by evaluating the numerator sequence at the poles in complex arithmetic and the denominator sequence at the poles with {\em dual}-complex arithmetic and extracting the derivatives through what is essentially forward-mode automatic differentiation.

Let $A\in\R^{k\times k}$ and $B\in\R^{k\times k}$ be two square matrices. Let $\lambda(A,B) = \{\zeta\in\C:\det(A-\zeta B) = 0\}$ denote the generalized eigenvalues of pencil $(A,B)$, and let $\lambda(A) = \lambda(A, I)$ denote the eigenvalues of $A$. Let $\displaystyle\rho(A) = \max_{\lambda\in\lambda(A)} \abs{\lambda}$ be the spectral radius of $A$.

From Theorem~\ref{theorem:DrummondRecurrence}, the denominators in the Drummond transformation with $\omega_n = \Delta s_n$ satisfy:
\begin{align}
\pFq{0}{0}(z):\quad D_n^{(k+1)} + D_n^{(k)} & = \zeta\left[(n+k+2)D_n^{(k)} + kD_n^{(k-1)}\right],\label{eq:Drummond0F0}\\
\pFq{1}{0}(\alpha;z):\quad (\alpha+n+k+1)D_n^{(k+1)} + (\alpha+n+2k+1)D_n^{(k)} + kD_n^{(k-1)} & = \zeta\left[(n+k+2)D_n^{(k)} + kD_n^{(k-1)}\right].\label{eq:Drummond1F0}
\end{align}

We shall also require a bound on the roots of generalized Bessel polynomials~\cite{Krall-Frink-65-100-49}:
\[
Y_k^{(\delta)}(z) = \sum_{j=0}^k\binom{k}{j}(k+\delta+1)_j)\left(\frac{-z}{2}\right)^j,
\]
given by Do\v cev~\cite{Docev-6-89-62} with a condition on $\delta$ removed by Saff and Varga~\cite[Theorems 5.1 \& 5.2]{Saff-Varga-7-344-76}.
\begin{lemma}
If $k+\delta+1>0$, then all the zeros of $Y_k^{(\delta)}(z)$ lie in the closed disk:
\[
\left\{z\in\C:\abs{z}\le \frac{2}{k+\delta+1}\right\}.
\]
\end{lemma}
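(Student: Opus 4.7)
The plan is to follow the approach of Docev and Saff--Varga, organized in three steps that reveal why $(k{+}\delta{+}1)/2$ is the natural scaling. First I would rescale: setting $w = (k+\delta+1)z/2$, one obtains
\begin{equation*}
P_k(w) := Y_k^{(\delta)}\!\left(\tfrac{2w}{k+\delta+1}\right) = \sum_{j=0}^k \binom{k}{j} \frac{(k+\delta+1)_j}{(k+\delta+1)^j}(-w)^j,
\end{equation*}
so the claim reduces to showing every zero of $P_k$ satisfies $|w| \leq 1$. Since $(k+\delta+1)_j/(k+\delta+1)^j \to 1$ as $k+\delta+1 \to \infty$, the polynomial $P_k$ is a controlled perturbation of $(1-w)^k$, whose only root is at $w = 1$ on the unit circle. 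This heuristic motivates the bound and identifies the natural variable.

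Next I would derive, by direct substitution of the power series, the second-order ODE
\begin{equation*}
z^2 y'' + [(\delta+2)z - 2]\, y' - k(k+\delta+1)\, y = 0
\end{equation*}
satisfied by $y = Y_k^{(\delta)}(z)$, together with the derivative identity
\begin{equation*}
\frac{d}{dz} Y_k^{(\delta)}(z) = -\frac{k(k+\delta+1)}{2}\, Y_{k-1}^{(\delta+2)}(z),
\end{equation*}
which follows from the shifts $j\binom{k}{j} = k\binom{k-1}{j-1}$ and $(k+\delta+1)_j = (k+\delta+1)(k+\delta+2)_{j-1}$. This identity permits induction on $k$, because the derivative is (up to scalar) another generalized Bessel polynomial with parameters $(k-1, \delta+2)$, for which the hypothesis $k+\delta+1 > 0$ is preserved.

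Finally, I would invert and work with the reversed polynomial $\theta_k^{(\delta)}(u) := u^k Y_k^{(\delta)}(1/u)$, which is monic and whose roots are reciprocals of those of $Y_k^{(\delta)}$. Transforming the ODE to the variable $u = 1/z$, combining with the Gauss--Lucas theorem applied via the derivative identity above, and performing a Sturm-type comparison against the majorising binomial, I would obtain the zero-free disk $|u| < (k+\delta+1)/2$, which translates back to the stated bound on $Y_k^{(\delta)}$.

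The main obstacle is handling the weak hypothesis $k+\delta+1 > 0$, which permits $\delta$ to be very negative and spoils the coefficient positivity on which Docev's original argument relies (it effectively requires $\delta > -2$). Saff and Varga circumvent this by first establishing a zero-free parabolic region for a scaled family of polynomials and then deducing the disk bound through a limit/scaling argument. In a self-contained write-up I would follow their parabolic-region strategy rather than Docev's coefficient argument, since the former is exactly what is needed to accommodate the full range $k+\delta+1 > 0$.
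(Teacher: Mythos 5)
First, note that the paper does not prove this lemma at all: it is quoted as a known result, attributed to Do\v cev with the restriction on $\delta$ removed by Saff and Varga (Theorems 5.1 and 5.2 of their paper), so there is no in-paper argument to compare against. Judged on its own terms, your proposal is a plan rather than a proof, and its central step does not work as described. The rescaling and the derivative identity $\frac{d}{dz}Y_k^{(\delta)}(z) = -\tfrac{k(k+\delta+1)}{2}Y_{k-1}^{(\delta+2)}(z)$ are both correct, but the way you propose to exploit them is backwards: Gauss--Lucas asserts that the zeros of $Y_{k-1}^{(\delta+2)}$ (the derivative) lie in the convex hull of the zeros of $Y_k^{(\delta)}$, so knowing by induction that the derivative's zeros lie in the disk of radius $2/(k+\delta+2)$ gives no containment whatsoever for the zeros of $Y_k^{(\delta)}$ itself. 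The implication you need runs from the polynomial to its derivative, not the other way, so the induction does not close. Likewise, a ``Sturm-type comparison'' is a tool for real zeros of solutions of real second-order ODEs, whereas the zeros of generalized Bessel polynomials are genuinely complex (for $\delta=0$ and $k$ even, for instance, none are real), so that step has no purchase here.

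Your closing paragraph correctly identifies that the full hypothesis $k+\delta+1>0$ is the hard part and that Saff and Varga handle it via a zero-free parabolic region for the scaled family, but you then defer entirely to that strategy without executing any of it. Since that argument (a careful analysis of the three-term recurrence, or equivalently eigenvalue localization for the associated nonsymmetric tridiagonal pencil, followed by a scaling limit) is precisely where all the content lies, the proposal as written establishes nothing beyond the statement of the lemma and a correct but unused contiguous relation. A legitimate self-contained route, consistent with how the surrounding section of the paper treats pole locations, would be to write the reciprocal zeros of $Y_k^{(\delta)}$ as eigenvalues of the tridiagonal matrix coming from the three-term recurrence and bound the spectral radius directly; as it stands, the key inequality is asserted, not proved.
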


\begin{theorem}\label{theorem:Drummond0F0}
For the $k$ reciprocal roots of $D_n^{(k)}$ in Eq.~\eqref{eq:Drummond0F0}, we have:
\[
\frac{1}{n+k+1} \le \max_{1\le j\le k} \abs{\zeta_j} \le \frac{1}{n+2}.
\]
\end{theorem}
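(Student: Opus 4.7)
The plan is to identify $D_n^{(k)}$ explicitly as a (shifted) generalized Bessel polynomial evaluated at $\zeta = z^{-1}$, so that the upper bound is a direct application of the Do\v cev--Saff--Varga disk, and the lower bound follows from Vieta's formula on the product of roots.

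First I would use Lemma~\ref{lemma:DenominatorsAsTerminatingpFq} specialized to $p=q=0$ (so the $\alpha$- and $\beta$-Pochhammers are empty) to write, up to a factor of $\zeta^{n+1}$ that carries only trivial roots,
\[
D_n^{(k)} \propto \pFq{2}{0}\!\left(-k, n+2; \zeta\right) = \sum_{j=0}^k (-1)^j\binom{k}{j}(n+2)_j\,\zeta^j.
\]
Rescaling $\zeta \mapsto z/2$ and matching $k+\delta+1 = n+2$, this polynomial is exactly $Y_k^{(n+1-k)}(2\zeta)$, a generalized Bessel polynomial in the variable $2\zeta$ with parameter $\delta = n+1-k$. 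The condition $k+\delta+1 = n+2 > 0$ required by Do\v cev--Saff--Varga is therefore automatic for the natural case $n\in\N_0$.

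For the upper bound, I would invoke the cited lemma: all roots of $Y_k^{(n+1-k)}(w)$ lie in $|w|\le 2/(n+2)$, hence every reciprocal pole satisfies $|\zeta| \le 1/(n+2)$, which gives in particular $\max_j|\zeta_j|\le 1/(n+2)$.

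For the lower bound, I would read off the constant term (which is $1$) and the leading coefficient $(-1)^k(n+2)_k$ of $\pFq{2}{0}(-k,n+2;\zeta)$, so by Vieta
\[
\prod_{j=1}^k \zeta_j = \frac{1}{(n+2)_k}.
\]
Since $(n+2)_k = (n+2)(n+3)\cdots(n+k+1) \le (n+k+1)^k$, one obtains
\[
\max_{1\le j\le k}|\zeta_j| \;\ge\; \Bigl(\prod_{j=1}^k |\zeta_j|\Bigr)^{1/k} = \bigl[(n+2)_k\bigr]^{-1/k} \;\ge\; \frac{1}{n+k+1},
\]
which completes the bound. No step presents a real obstacle; the only thing requiring some care is the bookkeeping to match the $\pFq{2}{0}$ series arising from Lemma~\ref{lemma:DenominatorsAsTerminatingpFq} to the Do\v cev normalization of the generalized Bessel polynomial, after which both halves are essentially one-line consequences.
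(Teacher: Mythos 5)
Your proof is correct. The upper bound is exactly the paper's argument: identify $D_n^{(k)}$ (modulo the trivial factor $\zeta^{n+1}$) with the generalized Bessel polynomial $Y_k^{(n+1-k)}(2\zeta)$, note that $k+\delta+1=n+2>0$, and apply the Do\v cev--Saff--Varga disk. Your bookkeeping checks out: with $\delta=n+1-k$ one has $(k+\delta+1)_j=(n+2)_j$, so $Y_k^{(n+1-k)}(2\zeta)=\sum_{j=0}^k(-1)^j\binom{k}{j}(n+2)_j\zeta^j={}_2F_0(-k,n+2;\zeta)$, matching the $p=q=0$ case of Lemma~\ref{lemma:DenominatorsAsTerminatingpFq}. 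Where you genuinely diverge is the lower bound. The paper recasts the three-term relation \eqref{eq:Drummond0F0} as a generalized eigenvalue problem $Av=\zeta Bv$ with bidiagonal $A$ and $B$, inverts $B$ explicitly, and uses the arithmetic-mean inequality $\abs{\tr(B^{-1}A)}/k\le\rho(B^{-1}A)$ together with a telescoping evaluation $\tr(B^{-1}A)=k/(n+k+1)$. You instead read off the constant term and leading coefficient of ${}_2F_0(-k,n+2;\zeta)$, apply Vieta to get $\prod_j\abs{\zeta_j}=1/(n+2)_k$, and bound the maximum below by the geometric mean, $[(n+2)_k]^{-1/k}\ge 1/(n+k+1)$. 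Both are valid; yours is shorter, requires no matrix inversion, and in fact yields the marginally sharper intermediate bound $\max_j\abs{\zeta_j}\ge[(n+2)_k]^{-1/k}$. What the paper's route buys is uniformity with the rest of \S~\ref{subsection:GeneralizedEigenvalueProblems}: the pencil/trace argument needs only the recurrence coefficients, not an explicit closed form for the denominator polynomial, and so it transfers to cases such as Eq.~\eqref{eq:Delta0F0} and beyond where a clean hypergeometric product formula for the roots may not be available.
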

\begin{proof}
By Eq.~\eqref{eq:Drummond0F0}, the reciprocal roots of $D_n^{(k)}$ correspond to the generalized eigenvalues of the pencil $(A,B)$:
\[
\underbrace{\begin{pmatrix} 1 & 1\\ & \ddots & \ddots\\ & & \ddots & 1\\ & & & 1\end{pmatrix}}_{=:A}v = \zeta\underbrace{\begin{pmatrix} n+2\\ 1 & \ddots\\ & \ddots & \ddots\\ & & k-1 & n+k+1 \end{pmatrix}}_{=:B}v,
\]
and, since $B$ is invertible, the associated regular eigenvalue problem $B^{-1}A$. Since:
\[
\frac{\abs{\tr(B^{-1}A)}}{k} \le \rho(B^{-1}A),
\]
we easily obtain the lower bound with $B^{-1}$:
\[
(B^{-1})_{i,j} = \left\{ \begin{array}{ccc} \dfrac{(-1)^{i-j} (j)_{i-j}}{(n+j+1)_{i-j+1}} & {\rm for} & i \ge j,\\ 0 & \multicolumn{2}{c}{\rm otherwise.}\end{array}\right.
\]
Since:
\begin{align*}
\tr(B^{-1}A) & = \sum_{i,j=1}^k (B^{-\top})_{i,j}A_{i,j},\\
& = \sum_{j=1}^{k-1}\left[ (B^{-1})_{j,j} + (B^{-1})_{j+1,j} \right] + (B^{-1})_{k,k},\\
& = \sum_{j=1}^{k-1} \left[\frac{1}{n+j+1} - \frac{j}{(n+j+1)_2}\right] + \frac{1}{n+k+1},\\
& = \sum_{j=1}^{k-1} \frac{n+2}{(n+j+1)_2} + \frac{1}{n+k+1},\\
& = (n+2)\left(\frac{1}{n+2}+\frac{1}{n+k+1}\right) + \frac{1}{n+k+1} = \frac{k}{n+k+1}.
\end{align*}
The upper bound follows from the representation of the denominators as generalized Bessel polynomials:
\[
D_n^{(k)} = (-1)^k(1)_{n+1}\zeta^{n+1} Y_k^{(n+1-k)}(2\zeta).
\]
\end{proof}

\begin{theorem}\label{theorem:Drummond1F0}
If $\alpha\le1$ and $\alpha+n+1>0$, the $k$ reciprocal roots of $D_n^{(k)}$ in Eq.~\eqref{eq:Drummond1F0} are all contained in the closed disk:
\[
\left\{\zeta\in\C:\abs{\zeta-1}\le 1\right\}.
\]
\end{theorem}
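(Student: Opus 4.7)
The plan is to convert the claim into a spectral radius bound for a matrix derived from the three-term recurrence in Eq.~\eqref{eq:Drummond1F0}. Writing $v = (D_n^{(0)}, \ldots, D_n^{(k-1)})^T$ and imposing the boundary condition $D_n^{(k)} = 0$, I first observe that the $k$ reciprocal roots of $D_n^{(k)}$ are precisely the generalized eigenvalues of the pencil $(A, B)$, where $A$ is tridiagonal with $A_{j, j+1} = \alpha+n+j$, $A_{j,j} = \alpha+n+2j-1$, $A_{j,j-1} = j-1$ (the entry $A_{k,k+1}$ suppressed by the boundary), and $B$ is lower bidiagonal with $B_{j,j} = n+j+1$, $B_{j,j-1} = j-1$. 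Since $|\zeta - 1| \le 1$ is equivalent to the spectrum of $B^{-1}(A-B)$ lying in the closed unit disk, the problem reduces to showing $\|B^{-1}(A-B)\|_\infty \le 1$.

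Next, I would compute the entries of $B^{-1}(A-B)$ in closed form. Using the explicit formula $(B^{-1})_{i,j} = (-1)^{i-j}(j)_{i-j}/(n+j+1)_{i-j+1}$ for $i \ge j$ (available from the proof of Theorem~\ref{theorem:Drummond0F0}) together with the upper bidiagonal structure of $A-B$ (diagonal $\alpha+j-2$, superdiagonal $\alpha+n+j$), a short manipulation yields
\[
(B^{-1}(A-B))_{i,j} = \frac{(-1)^{i-j}(j)_{i-j}(n+1)(\alpha-1)}{(n+j)(n+j+1)_{i-j+1}} \quad (i \ge j), \qquad (B^{-1}(A-B))_{j-1,j} = \frac{\alpha+n+j-1}{n+j}.
\]

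The technical core of the argument is the combinatorial identity
\[
S_i := \sum_{j=1}^i \frac{(j)_{i-j}}{(n+j)(n+j+1)_{i-j+1}} = \frac{1}{(n+1)(n+i+1)},
\]
which I would establish by rewriting each summand as $(i-1)!(n+j-1)!/[(j-1)!(n+i+1)!]$ and then recognizing $\sum_{j=1}^i \binom{n+j-1}{n} = \binom{n+i}{n+1}$ as the hockey-stick identity. I expect this to be the main difficulty, both in pinning down the correct closed form and in verifying the telescoping.

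Finally, with this identity in hand and the hypotheses $\alpha \le 1$ (so $|\alpha-1| = 1-\alpha$) and $\alpha+n+1 > 0$ (so $\alpha+n+i > 0$ for $i \ge 1$), the absolute row sum for $i < k$ becomes
\[
\frac{\alpha+n+i}{n+i+1} + (n+1)(1-\alpha)S_i = \frac{\alpha+n+i}{n+i+1} + \frac{1-\alpha}{n+i+1} = 1,
\]
while for $i = k$ the superdiagonal is absent and the row sum reduces to $(1-\alpha)/(n+k+1) < 1$, since $\alpha+n+1 > 0$ forces $1-\alpha < n+2 \le n+k+1$. Hence $\|B^{-1}(A-B)\|_\infty = 1$, which gives $\rho(B^{-1}(A-B)) \le 1$, and so every reciprocal root $\zeta$ satisfies $|\zeta-1| \le 1$.
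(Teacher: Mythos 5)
Your proposal is correct and follows essentially the same route as the paper: form the pencil $(A,B)$ from the three-term recurrence, shift to $\eta=\zeta-1$, compute $D=B^{-1}(A-B)$ explicitly, and bound $\rho(D)\le\norm{D}_\infty=1$ via the row-sum identity $\sum_{j=1}^i (j)_{i-j}(n+1)/(n+j)_{i-j+2}=1/(n+i+1)$, which is exactly the identity the paper uses (your $(n+j)(n+j+1)_{i-j+1}=(n+j)_{i-j+2}$). The only addition is your hockey-stick derivation of that identity, which the paper asserts without proof.
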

\begin{proof}
By Eq.~\eqref{eq:Drummond1F0}, the reciprocal roots of $D_n^{(k)}$ are the generalized eigenvalues of the pencil $(A,B)$:
\[
\underbrace{\begin{pmatrix} \alpha+n+1 & \alpha+n+1\\ 1 & \alpha+n+3 & \alpha+n+2\\ & \ddots & \ddots & \alpha+n+k-1\\ & & k-1 & \alpha+n+2k-1\end{pmatrix}}_{=:A}v = \zeta\underbrace{\begin{pmatrix} n+2\\ 1 & \ddots\\ & \ddots & \ddots\\ & & k-1 & n+k+1 \end{pmatrix}}_{=:B}v,
\]
Shifting, $\eta = \zeta-1$:
\[
\begin{pmatrix} \alpha-1 & \alpha+n+1\\ & \ddots & \ddots\\ & & \ddots & \alpha+n+k-1\\ & & & \alpha+k-2\end{pmatrix}v = \eta\begin{pmatrix} n+2\\ 1 & \ddots\\ & \ddots & \ddots\\ & & k-1 & n+k+1 \end{pmatrix}v,
\]
Inverting $B$ as above and applying it to the matrix on the left, we find a matrix with almost all absolute row sums equal to $1$, and the last is less than or equal to $1$ only if $\alpha\le 1$. Specifically, let $D = B^{-1}(A-B)$ so that:
\[
D_{i,j} = \left\{ \begin{array}{ccc} \dfrac{(-1)^{i-j} (j)_{i-j}(\alpha-1)(n+1)}{(n+j)_{i-j+2}} & {\rm for} & j\le i,\\ \frac{n+i+\alpha}{n+i+1} & {\rm for} & j = i+1\\ 0 & \multicolumn{2}{c}{\rm otherwise.}\end{array}\right.
\]
Then, since $\rho(D)\le \norm{D}_\infty$ and:
\[
\sum_{j=1}^i \frac{(j)_{i-j}(n+1)}{(n+j)_{i-j+2}} = \frac{1}{n+i+1},
\]
we have:
\[
\norm{D}_\infty = \max_{1\le i\le k}\left\{ \frac{|n+i+\alpha|}{n+i+1}(1-\delta_{k,i}) + \frac{|\alpha-1|}{n+i+1}\right\}
\]
Since $\alpha<1$ and $n+\alpha+1\ge0$:
\[
\norm{D}_\infty = \max_{1\le i\le k}\left\{ \frac{(n+i+\alpha)(1-\delta_{k,i}) + 1-\alpha}{n+i+1}\right\} = \max\left\{ 1, \frac{1-\alpha}{n+k+1}\right\} = 1.
\]
\end{proof}
From Theorem~\ref{theorem:FactorialLevinRecurrence} and Remark~\ref{remark:reducelength}, the denominators in the factorial Levin-type transformation with $\omega_n = \Delta s_n$ and $\gamma = 2$ satisfy:
\begin{align}
\pFq{0}{0}(z):\quad \frac{Q_n^{(k+1)}}{(n+2k+1)_2} + \frac{n Q_n^{(k)}}{(n+2k)(n+2k+2)} - \frac{k(n+k)Q_n^{(k-1)}}{(n+2k)_2} & = \zeta Q_n^{(k)},\label{eq:Delta0F0}\\
\pFq{1}{0}(\alpha;z):\quad \frac{(n+k+\alpha+1)Q_n^{(k+1)}}{(n+2k+1)_2} +\frac{1}{2}\left[1+\frac{n(n+2\alpha)}{(n+2k)(n+2k+2)}\right]Q_n^{(k)} + \frac{(k-\alpha)k(n+k)}{(n+2k)_2}Q_n^{(k-1)} & = \zeta Q_n^{(k)}.\label{eq:Delta1F0}
\end{align}

\begin{theorem}\label{theorem:Delta0F0}
For the $k$ reciprocal roots of $Q_n^{(k)}$ in Eq.~\eqref{eq:Delta0F0}, we have:
\[
\frac{1}{n+2k} \le \max_{1\le j\le k} \abs{\zeta_j} \le \frac{1}{n+k+1}.
\]
\end{theorem}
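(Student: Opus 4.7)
The plan is to mirror the proof of Theorem~\ref{theorem:Drummond0F0}: pair a trace-based lower bound with a generalized Bessel polynomial upper bound. Since Eq.~\eqref{eq:Delta0F0} already writes $\zeta Q_n^{(k)}$ as a three-term linear combination of consecutive denominators, the recurrence defines a standard (not generalized) tridiagonal eigenvalue problem $Av=\zeta v$, where $A$ is $k\times k$ with diagonal, superdiagonal, and subdiagonal entries at row $k'=0,1,\ldots,k-1$ given by
\[
b_{k'}=\frac{n}{(n+2k')(n+2k'+2)},\quad a_{k'}=\frac{1}{(n+2k'+1)(n+2k'+2)},\quad c_{k'}=-\frac{k'(n+k')}{(n+2k')(n+2k'+1)}.
\]
The $k$ nontrivial reciprocal roots of $Q_n^{(k)}$ are precisely $\lambda(A)$.

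For the lower bound, I would exploit $\max_j\abs{\zeta_j}=\rho(A)\ge\abs{\tr(A)}/k$. Partial fractions give $b_{k'}=\tfrac{n}{2}\bigl(\tfrac{1}{n+2k'}-\tfrac{1}{n+2k'+2}\bigr)$, so the diagonal telescopes cleanly to $\tr(A)=\tfrac{n}{2}\bigl(\tfrac{1}{n}-\tfrac{1}{n+2k}\bigr)=\tfrac{k}{n+2k}$, yielding the desired bound $\rho(A)\ge 1/(n+2k)$.

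For the upper bound, I would invoke Lemma~\ref{lemma:DenominatorsAsTerminatingpFq} with $\gamma=2$ and $p=q=0$, which writes $Q_n^{(k)}$ as a monomial $\zeta^{n+1}$ (carrying only the trivial roots) times the terminating series $\pFq{2}{0}(-k,k+n+1;\zeta)=\sum_{j=0}^k\binom{k}{j}(-1)^j(k+n+1)_j\zeta^j$. Matching this coefficient-by-coefficient with $Y_k^{(n)}(2\zeta)=\sum_{j=0}^k\binom{k}{j}(k+n+1)_j(-\zeta)^j$ identifies the nontrivial part of $Q_n^{(k)}$ with a generalized Bessel polynomial in $2\zeta$. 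Since $k+n+1>0$ automatically, the Do\v cev--Saff--Varga bound confines every zero to $\abs{2\zeta}\le 2/(k+n+1)$, i.e.~$\abs{\zeta}\le 1/(n+k+1)$.

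The main obstacles are bookkeeping rather than conceptual: checking that the telescoping sum really produces $k/(n+2k)$ (including the benign $0/0$ cancellation at $k'=0$ when $n=0$), and confirming that the $\pFq{2}{0}$ coefficients agree with those of $Y_k^{(n)}$ under the substitution $z\mapsto 2\zeta$. Neither requires new machinery beyond what was already deployed for Theorem~\ref{theorem:Drummond0F0}, so the proof is essentially the factorial Levin-type analogue of that earlier argument.
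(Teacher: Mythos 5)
Your proposal is correct and follows essentially the same route as the paper: the lower bound via $\rho(A)\ge\abs{\tr(A)}/k$ with the telescoping sum $\tr(A)=k/(n+2k)$, and the upper bound via the identification $Q_n^{(k)}=(-1)^k(1)_{n+k}\zeta^{n+1}Y_k^{(n)}(2\zeta)$ combined with the Do\v cev--Saff--Varga disk. The only difference is that you spell out the coefficient matching with the terminating $\pFq{2}{0}$ from Lemma~\ref{lemma:DenominatorsAsTerminatingpFq}, which the paper simply asserts.
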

\begin{proof}
Eq.~\eqref{eq:Delta0F0} defines a tridiagonal eigenvalue problem, and we take its trace for the lower bound:
\[
\frac{\abs{\tr(A)}}{k} = \frac{1}{k}\sum_{j=0}^{k-1}\frac{n}{(n+2j)(n+2j+2)} = \frac{1}{n+2k} \le \rho(A).
\]
The upper bound follows from the representation of the denominators as generalized Bessel polynomials:
\[
Q_n^{(k)} = (-1)^k(1)_{n+k}\zeta^{n+1} Y_k^{(n)}(2\zeta).
\]
\end{proof}

\begin{theorem}\label{theorem:Delta1F0}
For $-n-1<\alpha<1$, the $k$ reciprocal roots of $Q_n^{(k)}$ in Eq.~\eqref{eq:Delta1F0} are all real and contained in $(0,1)$.
\end{theorem}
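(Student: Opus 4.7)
The plan is to identify $Q_n^{(k)}$, viewed as a polynomial of degree $k$ in $\zeta = 1/z$ after extraction of the trivial $\zeta^{n+1}$ factor, with a classical Jacobi polynomial, and then invoke the standard result on the real location of its zeros.

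Specializing Lemma~\ref{lemma:DenominatorsAsTerminatingpFq} with $p=1$, $q=0$, and $\gamma=2$ to $\pFq{1}{0}(\alpha;z)$ gives
\[
Q_n^{(k)} = \frac{(-1)^k(1)_{n+k}}{(\alpha)_{n+1}}\,\zeta^{n+1}\,\pFq{2}{1}\left(\begin{array}{c}-k,\ k+n+1\\ \alpha+n+1\end{array};\,\zeta\right),
\]
and, per the opening of \S~\ref{subsection:GeneralizedEigenvalueProblems}, the $k$ nontrivial reciprocal poles are exactly the $k$ zeros of the $\pFq{2}{1}$ factor. Matching this against the standard hypergeometric representation of Jacobi polynomials,
\[
P_k^{(a,b)}(x) = \binom{k+a}{k}\,\pFq{2}{1}\left(\begin{array}{c}-k,\ k+a+b+1\\ a+1\end{array};\,\frac{1-x}{2}\right),
\]
I take $a=\alpha+n$ and $b=-\alpha$, so that both $a+b+1=n+1$ and $a+1=\alpha+n+1$ hold, and the change of variables $\zeta=(1-x)/2$ puts the nontrivial reciprocal poles of $Q_n^{(k)}$ in one-to-one correspondence with the zeros of $P_k^{(\alpha+n,-\alpha)}(x)$.

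It then remains to invoke the classical fact that whenever $a>-1$ and $b>-1$, the Jacobi polynomial $P_k^{(a,b)}$ has $k$ simple real zeros in the open interval $(-1,1)$, a consequence of its orthogonality with respect to the positive weight $(1-x)^a(1+x)^b$. The hypothesis $-n-1<\alpha<1$ translates precisely into $a=\alpha+n>-1$ and $b=-\alpha>-1$, so the theorem applies, and the corresponding reciprocal poles $\zeta=(1-x)/2$ are real and in $(0,1)$.

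The only obstacle is essentially recognizing the Jacobi polynomial hidden inside the terminating $\pFq{2}{1}$ and confirming that the parameter conditions line up. An alternative route via the tridiagonal pencil in Eq.~\eqref{eq:Delta1F0} would first symmetrize it by a positive diagonal similarity --- which requires the products of adjacent off-diagonal coefficients, proportional to $(k-\alpha)(n+k+\alpha+1)$, to be positive, again equivalent to $-n-1<\alpha<1$ --- to deduce a real spectrum, but would then need a separate positivity and upper-bound argument to confine the spectrum to $(0,1)$; this would be strictly more work than the direct Jacobi identification.
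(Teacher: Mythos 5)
Your proposal is correct and follows essentially the same route as the paper: the paper's proof likewise identifies $Q_n^{(k)}$ with the scaled Jacobi polynomial $\zeta^{n+1}P_k^{(\alpha+n,-\alpha)}(1-2\zeta)$ (your constant $\frac{(-1)^k(1)_{n+k}}{(\alpha)_{n+1}}\binom{k+\alpha+n}{k}^{-1}$ indeed simplifies to the paper's $\frac{(-1)^k k!(n+k)!}{(\alpha)_{n+k+1}}$) and concludes from the classical location of Jacobi zeros in $(-1,1)$ under $\alpha+n>-1$, $-\alpha>-1$. Even your aside about the alternative tridiagonal/Gerschgorin route mirrors the paper's opening remark.
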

\begin{proof}
Eq.~\eqref{eq:Delta1F0} defines a real irreducible tridiagonal eigenvalue problem on which Gerschgorin's theorem may be applied after balancing $1$- and $\infty$-norms. The recurrence relation is solved by scaled Jacobi polynomials~\cite[\S 18.3]{Olver-et-al-NIST-10}:
\[
Q_n^{(k)} = \dfrac{(-1)^kk!(n+k)!}{(\alpha)_{n+k+1}}\zeta^{n+1}P_k^{(\alpha+n,-\alpha)}(1-2\zeta),
\]
whose roots lie in $(-1,1)$.
\end{proof}

Comparing Theorems~\ref{theorem:Drummond0F0} and~\ref{theorem:Delta0F0}, we observe that the rate at which the reciprocal roots of $Q_n^{(k)}$ tend to $0$ is characterized as $\OO(k^{-1})$, while the upper bound for the reciprocal roots of $D_n^{(k)}$ is not as informative. In this case, we have evidence that the factorial Levin transformation is superior to the Drummond transformation given that the lower bound on the largest reciprocal root of $D_n^{(k)}$ is the same as the upper bound of those of $Q_n^{(k)}$.

Unfortunately, the roots of $D_n^{(k)}$ in Eq.~\eqref{eq:Drummond1F0} appear to be misplaced: instead of lying on the ray $[1,\infty)$, they appear to be complex and as $k\to\infty$ tend to the boundary of the disk $\abs{\zeta-1} = 1$ from the right, which is equivalent to the line $\Re z = \tfrac{1}{2}$. Figure~\ref{fig:2F1Drummond} shows numerical evidence to suggest that this phenomenon carries over to $\pFq{2}{1}$ as well. While Theorem~\ref{theorem:Delta1F0} is an unequivocal success, Figure~\ref{fig:2F1Weniger} shows that this results does not carry over to $\pFq{2}{1}$.

\section{Conclusion and Future Directions}

Recurrence relations are derived for certain rational approximations to generalized hypergeometric functions coming from the Drummond and factorial Levin-type sequence transformations. Numerical results illustrate that the latter are superior in each of the three numerical representative classes. In the context of a package for numerically computing generalized hypergeometric functions, these algorithms can only be seen as a special but important subset of the whole. A polyalgorithm could include special cases for the parameters, use of the Maclaurin series, linear, quadratic, and cubic variable transformations, contiguous relations~\cite{Wimp-22-363-68}, continued fractions, quadrature, and solution of the differential equation~\cite[\S 16.8.3]{Olver-et-al-NIST-10}, among many other reasonable approaches. Assembling such a Frankenstein is beyond the scope of this work.

We will now identify a few avenues for future work. The instability in the original formul\ae~for the Drummond and factorial Levin-type transformations is also present when summing sequences where the ratio of successive terms is irrational, such as the Fox--Wright function~\cite{Fox-27-389-28,Wright-10-286-35}, defined by its Maclaurin series:
\[
{}_p\Psi_q\left[\begin{array}{c} (a_1, A_1),\ldots,(a_p, A_p)\\ (b_1,B_1),\ldots,(b_q,B_q)\end{array}; z\right] = \sum_{k=0}^\infty\dfrac{\Gamma(a_1+A_1k)\cdots\Gamma(a_p+A_pk)}{\Gamma(b_1+B_1k)\cdots\Gamma(b_q+B_qk)}\frac{z^k}{k!},
\]
or the Riemann zeta function~\cite[\S 25.2]{Olver-et-al-NIST-10}, defined for $\Re s > 1$ by its infinite series:
\[
\zeta(s) = \sum_{n=1}^\infty \frac{1}{n^s}.
\]
It can be shown that the terms of these sequences still satisfy Theorem~\ref{theorem:mthorderdifference} with $m=1$. While Theorems~\ref{theorem:DrummondRecurrence} and~\ref{theorem:FactorialLevinRecurrence} do not apply unless $A_i,B_i\in\Z$ in the former or $s\in\Z$ in the latter, it is worth exploring whether Eqs.~\eqref{eq:DrummondDenominatorRecurrence},~\eqref{eq:DrummondNumeratorRecurrence},~\eqref{eq:WenigerDenominatorFiniteRecurrence}, and~\eqref{eq:WenigerNumeratorFiniteRecurrence} can be adapted to this setting and provide more stable implementations.

The Drummond and Levin-type transformations have continuous analogues~\cite{Levin-Sidi-9-175-81,Gray-Wang-29-271-92,Brezinski-Redivo-Zaglia-60-1231-10,Slevinsky-Safouhi-48-301-08,Slevinsky-Safouhi-109-1741-09,Slevinsky-Safouhi-60-1411-10,Gaudreau-Slevinsky-Safouhi-34-B65-12}, the latter of which is known as the $G_n^{(1)}$ transformation for approximating semi-infinite integrals. A particular case~\cite{Slevinsky-Safouhi-92-973-23} of linear complexity recurrence relations has been constructed for incomplete Bessel functions. The next logical step is a systematic construction and investigation based on integrands satisfying first-order linear homogeneous differential equations with a rational variable coefficient.

Finally, hypergeometric functions of matrix argument have innumerable applications. Two particular ones include the celebrated $\varphi$-functions in exponential integrators~\cite{Hochbruck-Ostermann-19-209-10} and computing $A^\alpha$, $\log(A)$, among others~\cite{Hale-Higham-Trefethen-46-2505-08}. For $\exp(A)$, unitarity preserving approximations are non-negotiable in the time evolution of physical observables in quantum mechanics. It is well known~\cite{Birkhoff-Varga-44-1-65,Ehle-4-671-73,Saff-Varga-25-1-75} that type-$(k,k)$ Pad\'e approximants to the exponential function are unitary, since each of its poles in the right-half plane is paired with a root in the left-half plane, preserving the property that $|[k/k]_{\exp}(z)| = 1$ for $z\in\ii\R$. In~\cite[Example 1]{Sidi-7-37-81}, Sidi shows that the factorial Levin-type transformation with $\omega_n = a_{n+1}$ and $\gamma=2$ of $\exp(z)$ creates Pad\'e approximants. By Theorem~\ref{theorem:FactorialLevinRecurrence} and Eq.~\eqref{eq:Delta0F0}, our recurrence for the main diagonal in the Pad\'e table to the exponential is extremely efficient on the imaginary axis:
\begin{equation}\label{eq:Padeforexp}
\begin{array}{lll}
\mu_0 = 1, & \mu_1 = \dfrac{1}{2-z}, & \mu_{k+1} = \dfrac{1}{4k+2 + z^2\mu_k},\\&&\\
R_0 = 1, & R_1 = R_0 + 2z\mu_1, & R_{k+1} = [(4k+2)R_k + z^2R_{k-1}\mu_k]\mu_{k+1}.
\end{array}
\end{equation}
Figure~\ref{fig:Padeforexp} depicts the type-$(k,k)$ Pad\'e approximants to $\exp(z)$ on the imaginary axis. It is observed that convergence takes places far from the origin and the scheme is stable in that divergence is not observed if the order is greater than necessary.
\begin{figure}[htbp]
\begin{center}
\includegraphics[width=\textwidth]{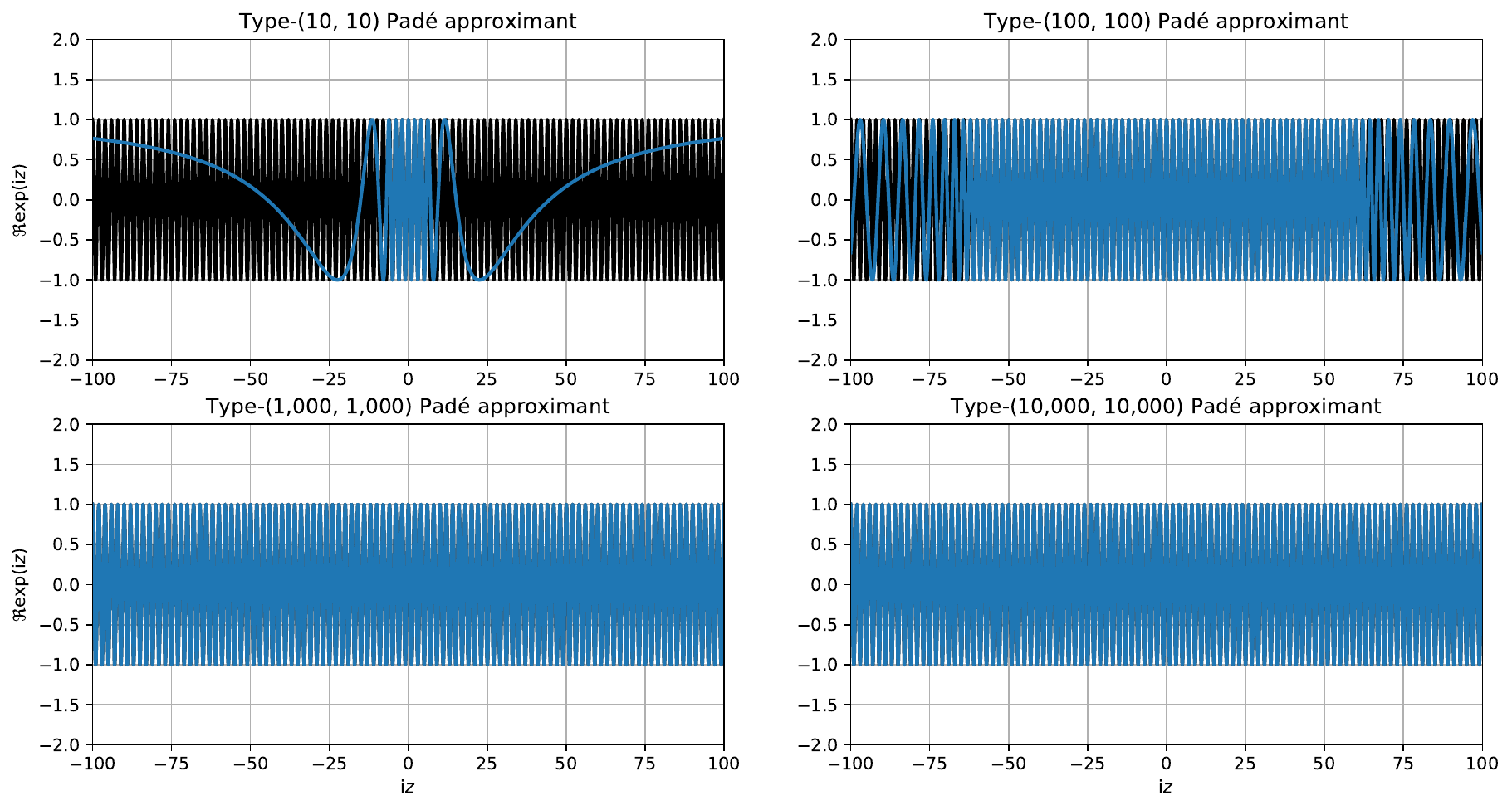}
\end{center}
\caption{The real part of unitarity-preserving type-$(k,k)$ Pad\'e approximants to $\exp(z)$ computed by Eq.~\eqref{eq:Padeforexp}.}
\label{fig:Padeforexp}
\end{figure}
For a more complete appreciation, our algorithm approximates $\exp(10^9\ii)$ in double precision by:
\[
[500,004,886/500,004,886]_{\exp}(10^9\ii) = 0.837\,887\,181\,360\,9+0.545\,843\,449\,454\,380\,2\ii,
\]
a result with an absolute and relative error less than $6.43\times10^{-12}$, and an error in unitarity less than $5.86\times10^{-13}$.

According to Moler and Van Loan~\cite{Moler-Van-Loan-20-801-78}, while Pad\'e approximation combined with scaling and squaring is the most effective algorithm to compute the matrix exponential, most of the computational time is spent in the latter step since classical computation of Pad\'e numerators and denominators suffers from significant rounding errors. With Eq.~\eqref{eq:Padeforexp}, this balance may be reconsidered.

More stable algorithms for diagonals in the Pad\'e table of other functions could conceivably be constructed by recurrence relations analogous to those of the Drummond and factorial Levin-type transformations in this work.

\section*{Acknowledgments}

We thank Nick Trefethen for a discussion on Pad\'e approximation to $e^z$. RMS is supported by the Natural Sciences and Engineering Research Council of Canada, through a Discovery Grant (RGPIN-2017-05514).

\bibliography{/Users/Mikael/Bibliography/Mik}

\end{document}